\documentclass[12pt]{amsart}
\usepackage{geometry}
\usepackage[english]{babel}

\usepackage{amssymb}
\usepackage{amsmath}
\usepackage{amsthm}
\usepackage{graphicx}
\usepackage[colorlinks=true, allcolors=blue]{hyperref}

\makeatletter
\renewcommand\th@plain{\slshape}
\makeatother

\newtheorem{theorem}{Theorem}[section]
\newtheorem{lemma}[theorem]{Lemma}

\newtheorem{proposition}[theorem]{Proposition}

\theoremstyle{remark}

\theoremstyle{definition}
\newtheorem{defn}[theorem]{Definition}
\newtheorem{remark}[theorem]{Remark}

\def\cX{\mathcal X}

\def\Aut{{\rm Aut}}

\def\deg{{\rm deg}}
\def\det{{\rm det}}

\def\Fq2{\mathbb F_{q^2}}

\newcommand{\coloneqq}{\mathrel{:=}}

\begin{document}

	\title[Maximality, Weierstrass Semigroups, and Automorphism Group 
of $Y^{q+1} = X^n(X^n + 1)$]{On the Maximality, Weierstrass Semigroups, and Automorphism Group 
of the Curve $Y^{q+1} = X^n(X^n + 1)$}
	\author{João Paulo Guardieiro}
	\address{Centro de Ciências Exatas e Tecnologia, Av. dos Portugueses, 1966, 65.080-805 São Luís-MA, Brazil.}
	\email{joao.guardieiro@ufma.br}
	
	\author{Yuri da Silva}
	\address{Instituto de Matemática, Estatística e Computação Científica da Universidade Estadual de Campinas, R. Sérgio Buarque de Holanda, 651, 13083-970 Campinas-SP, Brazil.}
	\email{y225979@dac.unicamp.br}
	
	\author{Saeed Tafazolian}
	\address{Instituto de Matemática, Estatística e Computação Científica da Universidade Estadual de Campinas, R. Sérgio Buarque de Holanda, 651, 13083-970 Campinas-SP, Brazil.}
	\email{saeed@unicamp.br}

	\begin{abstract}
		We study the algebraic curve over $\mathbb{F}_{q^2}$ defined by
		\[
		y^{q+1} = x^n(x^n+1),
		\]
		where $n$ is a positive integer coprime to the characteristic. We first prove (when $q$ is odd) that the nonsingular model of this curve is $\mathbb{F}_{q^2}$-maximal if and only if $n \mid (q+1)$. Writing $n = \frac{q+1}{m}$, we obtain a family of maximal curves parameterized by the divisors $m$ of $q+1$, which extends the previously studied case $m=3$ corresponding to maximal curves with the third largest possible genus.
		
		For this family, we determine the Weierstrass semigroups at several classes of rational points, including those lying above the branch points of the natural projection. These semigroups are described explicitly in terms of $q$ and $m$, and exhibit different behaviors depending on the arithmetic properties of $m$.
		
		Moreover, we determine the full automorphism group of the curve under a mild condition on the characteristic. Our results extend an earlier work on the case $m=3$ and provide new insight into the structure of this family of maximal curves.
	\end{abstract}
\maketitle
    
	\section{Introduction}
	
	Let $\mathbb{F}_{q^2}$ be the finite field with $q^2$ elements, where 
$q$ is a power of a prime $p$. An algebraic curve $\cX$ defined over 
$\mathbb{F}_{q^2}$ is called \emph{$\mathbb{F}_{q^2}$-maximal} if the 
number of its $\mathbb{F}_{q^2}$-rational points achieves the 
Hasse--Weil upper bound
\[
\# \cX(\mathbb{F}_{q^2}) = q^2 + 1 + 2q\, g(\mathcal{X}),
\]
where $g(\cX)$ denotes the genus of $\cX$. Maximal curves over finite 
fields are a central topic in arithmetic geometry, owing to their rich 
geometric structure and applications in coding theory, cryptography, 
and finite geometry.

The Hermitian curve
\[
\mathcal{H}_{q+1} : \quad y^{q+1} = x^{q+1} + 1
\]
is the classical example of a maximal curve over $\mathbb{F}_{q^2}$.
It has genus $g(\mathcal{H}_{q+1}) = q(q-1)/2$ and achieves Ihara's 
bound; that is, any maximal curve over $\mathbb{F}_{q^2}$ satisfies 
$g(\cX) \le q(q-1)/2$, with equality if and only if 
$\cX \cong \mathcal{H}_{q+1}$. Further families of maximal curves can 
be obtained as quotients or subcovers of the Hermitian curve, though 
not every maximal curve arises in this way.

	A broad framework for the construction of maximal curves is given by 
\emph{Kummer extensions} of the projective line of the form
\[
y^m = f(x),
\]
where $f(x) \in \mathbb{F}_{q^2}[x]$ is a separable polynomial and 
$\gcd(m, \deg f) = 1$. This class includes classical examples such as 
the Fermat and Hermitian curves, and their subfamilies often exhibit 
deep arithmetic properties.

In the present paper, we study the family of curves defined by the 
affine equation
\begin{equation}\label{maincurve}
    \overline{\mathcal{X}}_{n,q} : \quad y^{q+1} = x^n(x^n + 1),
\end{equation}
where $n \ge 2$ is an integer coprime to $p$.

The curve~\eqref{maincurve} enjoys a high degree of symmetry, owing 
to the exponent $q+1$ on $y$, which divides the order of the 
multiplicative group $\mathbb{F}_{q^2}^\ast$. This property makes 
$\overline{\mathcal{X}}_{n,q}$ a natural object of study from both 
algebraic and arithmetic perspectives.

	The main goal of this paper is to give a complete \emph{classification} 
of the maximal curves of type~\eqref{maincurve}. We determine explicit 
conditions on $n$ and $q$ under which the nonsingular model of 
$\overline{\mathcal{X}}_{n,q}$ is $\mathbb{F}_{q^2}$-maximal, and we 
compute its main geometric invariants, including the Weierstrass 
semigroups at certain $\mathbb{F}_{q^2}$-rational points. Furthermore, 
we investigate the structure of the Weierstrass semigroups at the 
remaining rational points and, under a mild assumption on the 
characteristic, we determine the full automorphism group of the curve.
	
	In particular, we compute the genus and the ramification structure of 
the natural projection $x \colon \overline{\mathcal{X}}_{n,q} \to 
\mathbb{P}^1$, and we characterize the cases in which 
$\overline{\mathcal{X}}_{n,q}$ is $\mathbb{F}_{q^2}$-covered by the 
Hermitian curve or by a quotient curve thereof. Consequently, we 
obtain new explicit examples of maximal curves.

    From an applications perspective, the explicit determination of Weierstrass semigroups and the exact number of rational points on algebraic curves has significant implications in geometric coding theory and cryptography. In particular, in the construction of algebraic geometry codes (such as Goppa codes), this precise geometric data is essential for designing codes with optimal parameters, including maximized minimum distance and high dimension (see, e.g., \cite{stichtenoth}). By addressing these problems, our results provide the necessary computational and theoretical tools for these engineering applications, aligning with the core interests of coding theory over finite fields.

	The paper is organized as follows. In 
Section~\ref{sec:maximality}, we determine the precise arithmetic 
conditions on $n$ and $q$ under which $\overline{\mathcal{X}}_{n,q}$ 
is $\mathbb{F}_{q^2}$-maximal. In Section~\ref{sec:results}, we 
introduce the necessary notation and collect preliminary results on 
divisors and automorphisms of $\overline{\mathcal{X}}_{n,q}$ needed 
in the subsequent sections. In Section~\ref{sec:Weierstrass}, we 
determine the Weierstrass semigroups at several classes of rational 
points of $\overline{\mathcal{X}}_{n,q}$. In 
Section~\ref{sec:weierstrass-other}, we study the case $n = \frac{q+1}{4}$ and 
discuss how these results extend to the general case. Finally, in 
Section~\ref{sec:automorphisms}, we determine the full automorphism 
group of $\overline{\mathcal{X}}_{n,q}$ under a mild assumption on 
the characteristic.

	\section{\texorpdfstring{Maximality of the Curve $y^{q+1} = x^{n}(x^{n}+1)$}
{Maximality of the Curve}}\label{sec:maximality}

In this section, we determine precisely when the nonsingular model 
of the curve
\[
\overline{\mathcal{X}}_{n,q} :\quad y^{q+1} = x^{n}(x^{n}+1)
\]
is $\mathbb{F}_{q^2}$-maximal. Our result generalizes the results 
of \cite{TT1,TT2}, where special cases of this family were studied.

\begin{theorem}\label{thm:maximal}
Let $q$ be a prime power and let $n \ge 1$ be an integer with 
$\gcd(2n,q)=1$. Then the nonsingular model of the curve
\[
\overline{\mathcal{X}}_{n,q}:\; y^{q+1}=x^{n}(x^{n}+1)
\]
is $\mathbb{F}_{q^2}$\textnormal{-maximal} if and only if 
$n \mid (q+1)$.
\end{theorem}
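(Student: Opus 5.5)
The plan is to handle the two directions separately. Sufficiency will come from an explicit covering by the Hermitian curve. Necessity will come from a character-sum (Jacobi sum) expression for the number of rational points over all extensions $\mathbb F_{q^{2r}}$.

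\emph{Sufficiency.} Assume $n\mid (q+1)$ and put $m=(q+1)/n$. On the Hermitian curve $\mathcal H_{q+1}: w^{q+1}=t^{q+1}+1$, set
\[
x=t^{m},\qquad y=wt .
\]
Then $y^{q+1}=t^{q+1}(t^{q+1}+1)=x^{n}(x^{n}+1)$. This gives a nonconstant morphism $\mathcal H_{q+1}\to\overline{\mathcal X}_{n,q}$ defined over $\mathbb F_{q^2}$. By the Kleiman--Serre (Lachaud) theorem, any curve $\mathbb F_{q^2}$-covered by a maximal curve is maximal, since its $L$-polynomial divides that of the cover. Hence the nonsingular model of $\overline{\mathcal X}_{n,q}$ is maximal. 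This direction is routine.

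\emph{Necessity.} Suppose the curve is maximal. Maximality is equivalent to every reciprocal root of the $L$-polynomial being $-q$, and this property is inherited over $\mathbb F_{q^{2r}}$ in the form $(-q)^r$. Replace $\mathbb F_{q^2}$ by an extension $\mathbb F_Q$, $Q=q^{2r}$, with $n\mid Q-1$; note that $q+1\mid Q-1$ already. Over $\mathbb F_Q$, count affine points fibrewise in $x$ and substitute $u=x^n$. This gives
\[
\#\overline{\mathcal X}(\mathbb F_Q)=Q+1+\sum_{\chi^{q+1}=1,\ \chi\neq 1}\ \sum_{\lambda^{n}=1}J_Q(\chi\lambda,\chi)\;+\;(\text{explicit correction at }x=0,\ x^n=-1,\ \infty),
\]
where $\chi,\lambda$ run over multiplicative characters of $\mathbb F_Q^*$ and $J_Q$ denotes Jacobi sums. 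Comparing with $Q+1+2g\sqrt Q\cdot(-1)^{r+1}$, where $g$ is computed by Riemann--Hurwitz from the ramification of $x$, each nontrivial Jacobi sum must equal $-(-q)^r=\pm\sqrt Q$ with the maximal sign. The degenerate terms with $\chi^2\lambda=1$ have to be matched separately with the correction terms.

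Next write $J_Q(\chi\lambda,\chi)=g(\chi\lambda)g(\chi)/g(\chi^2\lambda)$ via Gauss sums over $\mathbb F_Q$. By Stickelberger's theorem, or its purity form, a Gauss sum $g(\psi)$ over $\mathbb F_{q^{2r}}$ is a rational multiple of $\sqrt Q$ (``pure'') exactly when the order of $\psi$ divides $q^{r'}+1$ for a suitable $r'$. By Stickelberger's congruence, the $p$-adic valuation of $g(\psi)$ is given by digit sums of the exponent of $\psi$ in base $q$. Choose $\lambda$ of order $n/\gcd(n,q+1)>1$; this is possible exactly when $n\nmid q+1$. Then pair $\chi$ of order $q+1$ with $\lambda$ so that the digit-sum formula gives $\mathrm{ord}_{\mathfrak p}J_Q(\chi\lambda,\chi)\neq \tfrac12\,\mathrm{ord}_{\mathfrak p}Q$ at some prime $\mathfrak p$ over $p$. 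Such a Jacobi sum cannot equal $\pm\sqrt Q$, which contradicts maximality.

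The main obstacle is this last step: exhibiting, for every $n\nmid q+1$ with $\gcd(2n,q)=1$, a pair $(\chi,\lambda)$ whose Stickelberger digit sums are unbalanced. The first attempt will be $\chi$ of exact order $q+1$, with $\lambda$ built from a prime $\ell\mid n$ not dividing $q+1$, or from a prime power $\ell^{a}\| n$ exceeding the power of $\ell$ in $q+1$. I would then compute the base-$q$ digits of $\tfrac{(Q-1)}{q+1}+\tfrac{(Q-1)k}{n}$ explicitly.

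If this is too delicate, a fallback is geometric. Take the subcover $z=y^{(q+1)/2}$, $z^2=x^n(x^n+1)$; it is hyperelliptic when $n\ge 2$ and is forced to be maximal as a quotient. Then apply Tafazolian--Torres-type criteria for maximal hyperelliptic curves $z^2=x^n(x^n+1)$, which likewise force $n\mid q+1$. Here $q$ odd is used.
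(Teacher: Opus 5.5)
Your sufficiency argument is the paper's: the Hermitian cover $x=t^m$, $y=wt$, followed by Kleiman--Serre. The Jacobi-sum route you give as your main line for necessity has a genuine gap. The step you flag as delicate is false in general.

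You only look at extensions $\mathbb{F}_Q$, $Q=q^{2r}$, with $n\mid Q-1$, and try to find a Jacobi sum there that differs from $\pm\sqrt Q$. But failure of maximality over $\mathbb{F}_{q^2}$ need not be visible over any such $\mathbb{F}_Q$. Take $q=3$, $n=7$:
\begin{itemize}
\item Here $7\mid 9^r-1$ forces $3\mid r$.
\item Over $\mathbb{F}_{9^3}=\mathbb{F}_{27^2}$ the curve $y^4=x^7(x^7+1)$ is covered by the Hermitian curve $w^{28}=t^{28}+1$ via $x=t^4$, $y=t^7w^7$, so it is maximal there.
\item Hence over every admissible $\mathbb{F}_{9^r}$ all Frobenius eigenvalues equal $(-27)^{r/3}=(-3)^r$. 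This is exactly what $\mathbb{F}_9$-maximality would predict.
\item Consistently, $3^3\equiv -1 \pmod{28}$ makes every Gauss sum involved pure, so no pair $(\chi,\lambda)$ with unbalanced Stickelberger digit sums exists.
\item Yet the curve is not $\mathbb{F}_9$-maximal: its quotient $y^2=x^7+1$ has $10$ points over $\mathbb{F}_9$, not $28$.
\end{itemize}
The same thing happens whenever $n\mid q^k+1$ for some odd $k$ while $n\nmid q+1$. In that case the curve is Hermitian-covered, hence maximal, over $\mathbb{F}_{q^{2o}}$, where $o$ is the order of $q^2$ modulo $n$.

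The missing information lives in the $\mathbb{F}_{q^2}$-structure itself, which your passage to such $\mathbb{F}_Q$ discards. In the example, Frobenius permutes the character eigenspaces because $\mu_7\not\subset\mathbb{F}_9$. Separately, the purity criterion you quote should be stated in terms of $p$, namely $-1\in\langle p\rangle$ modulo the order of the character. It is also only a sufficient condition in general.

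Your fallback is precisely the paper's proof of necessity, and you should lead with it. It needs one concrete step that you left vague:
\begin{itemize}
\item The quotient $z=y^{(q+1)/2}$ gives $z^2=x^n(x^n+1)$.
\item This is birational over $\mathbb{F}_q$ to $y_1^2=x_1^n+1$. For $n$ even, put $y_1=z/x^{n/2}$. For $n$ odd, put $y_2=z/x^{(n-1)/2}$ to get $y_2^2=x^{n+1}+x$, then set $x_1=1/x$ and $y_1=y_2/x^{(n+1)/2}$.
\item Then apply Tafazolian's theorem: $y^2=x^n+1$ is $\mathbb{F}_{q^2}$-maximal if and only if $n\mid q+1$.
\end{itemize}
Stating this explicit reduction and the precise cited criterion is what turns your appeal to ``Tafazolian--Torres-type criteria'' into a complete argument. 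One small point: for $n\le 4$ the quotient has genus at most $1$, so calling it ``hyperelliptic'' is loose, though harmless.
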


\begin{proof}
Assume first that $n \mid (q+1)$. Write $q+1 = nm$.

Let $\mathcal{H}$ be the Hermitian curve $y^{q+1}=x^{q+1}+1$ over 
$\mathbb{F}_{q^{2}}$. Consider the morphism
\[
\varphi:\mathcal{H}\longrightarrow \overline{\mathcal{X}}_{n,q},
\qquad (a,b)\mapsto (a^{m},ab).
\]
A straightforward computation shows that $\varphi(a,b)$ satisfies 
$y^{q+1}=x^{n}(x^{n}+1)$. Hence $\overline{\mathcal{X}}_{n,q}$ is 
a quotient of the Hermitian curve. Since $\mathcal{H}$ is 
$\mathbb{F}_{q^{2}}$-maximal and maximality is preserved under 
nonconstant $\mathbb{F}_{q^2}$-rational morphisms \cite{Serre}, 
the nonsingular model of $\overline{\mathcal{X}}_{n,q}$ is also 
$\mathbb{F}_{q^{2}}$-maximal.

\medskip
Conversely, suppose that $\overline{\mathcal{X}}_{n,q}$ is 
$\mathbb{F}_{q^{2}}$-maximal. Consider the hyperelliptic curve
\[
\mathcal{X}_{n} : \quad y^{2} = x^{n}(x^{n} + 1).
\]
It is covered by $\overline{\mathcal{X}}_{n,q}$ via the map
\[
(x,y) \longmapsto \bigl(x,\, y^{(q+1)/2}\bigr).
\]
A change of variables shows that $\mathcal{X}_{n}$ is birational to
\[
\mathcal{C}_{n} : \quad y_{1}^{2} = x_1^{n} + 1,
\]
regardless of the parity of $n$. If $n$ is even, take 
$y_{1} = y / x^{n/2}$. If $n$ is odd, take 
$y_{2} = y / x^{(n-1)/2}$; then the curve is birational to 
$y_2^2 = x^{n+1} + x$, which is in turn birational to 
$y_1^2 = x_1^n + 1$ via the substitution
\[
y_1 = y_2/x^{(n+1)/2}, \qquad x_1 = 1/x.
\]
Hence $\mathcal{X}_{n} \cong \mathcal{C}_{n}$ over 
$\mathbb{F}_{q^{2}}$.

Since maximality passes to quotients, $\mathcal{C}_{n}$ is also 
$\mathbb{F}_{q^{2}}$-maximal. By \cite[Theorem~1]{Tafazolian2}, 
a hyperelliptic curve of the form $y^{2} = x^{n} + 1$ is 
$\mathbb{F}_{q^{2}}$-maximal if and only if $n \mid (q+1)$. 
Therefore $n \mid (q+1)$, completing the proof.
\end{proof}

\begin{remark}
\rm{Suppose that $n \mid (q+1)$ and write $q+1 = mn$ for some positive 
integer $m$. Then the curve is isomorphic to
\[
\mathcal{X}_{m,q} : \quad y^{q+1} + x^{2(q+1)/m} + x^{(q+1)/m} = 0.
\]
This form is particularly convenient, as the special case $m=3$ 
has already been investigated in the literature 
(see \cite[Example~5.1]{KT1} and \cite{BMV, FM}).}
\end{remark}

\begin{remark}
We will not consider certain cases, since they are already well understood: 
$m=1$ (birationally equivalent to the Hermitian curve), 
$m=2$ (the maximal curve with second highest genus, namely $(q-1)^2/4$), 
and $m=3$ (the maximal curve studied in \cite{BMV}).
\end{remark}

	\section{Notation and Preliminary Results}\label{sec:results}
In this section, we introduce the notation and collect the preliminary 
results on divisors and automorphisms of the curve $\mathcal{X}_{m,q}$ 
needed in the subsequent sections.
 Let $\mathcal{X}_{m,q}$ be the curve over $\mathbb{F}_{q^2}$ defined 
by the equation
\[
y^{q+1} + x^{2(q+1)/m} + x^{(q+1)/m} = 0,
\]
where $m$ is a divisor of $q+1$ with $2 < m < q+1$; write $p=\operatorname{char}(\mathbb{F}_{q^2})$. Note that 
$\mathcal{X}_{m,q}$ is a Kummer extension of $\mathbb{P}^1$ of degree 
$q+1$, and that $\mathcal{X}_{m,q}$ is Galois covered by the Hermitian 
curve $\mathcal{H}:u^{q+1}+v^{q+1}+1=0$. Indeed, the map 
$(u,v) \mapsto (u^m,uv)$ defines a covering from $\mathcal{H}$ to 
$\mathcal{X}_{m,q}$ with generic fiber of cardinality $m$, so that 
$[\mathbb{F}_{q^2}(\mathcal{H}):\mathbb{F}_{q^2}(\mathcal{X}_{m,q})]=m$.

Let $\xi\in\mathbb{F}_{q^2}$ be a primitive $(q+1)$-th root of unity. 
One can verify that $G \leq \Aut(\mathcal{H})$, where
$$G \cong \left\{\begin{pmatrix}
    \xi^{k(q+1)/m} & 0 & 0 \\
    0 & \xi^{q+1 - k(q+1)/m} & 0 \\
    0 & 0 & 1
\end{pmatrix} \right\}_{k = 0, \ldots, m-1}.$$
Since $|G| = m$ and $G$ fixes the covering map from $\mathcal{H}$ to 
$\mathcal{X}_{m,q}$, we conclude that 
$\mathbb{F}_{q^2}(\mathcal{H})/\mathbb{F}_{q^2}(\mathcal{X}_{m,q})$ 
is a Galois extension, and that the nonsingular model of 
$\mathcal{X}_{m,q}$ is precisely $\mathcal{H}/G$.

The pre-image of $(0:0:1)$ under the covering map is the set 
$\{(0:\eta:1) \mid \eta^{q+1}+1=0\}$. The group $G$ partitions these 
points into $\frac{q+1}{m}$ orbits. By \cite[Lemma~VII.6.15]{lorenzini}, 
there are $\frac{q+1}{m}$ places centered at $(0:0:1)$. These places 
will be denoted by $P_0^1, \ldots, P_0^{(q+1)/m}$, and we define
$$\mathcal{O}_0 = \{P_0^1, \ldots, P_0^{(q+1)/m}\} \quad 
\text{and} \quad D_0=\sum_{i=1}^{(q+1)/m}P_0^i.$$

Analogously, the number of places centered at $(1:0:0)$ is
$$\begin{cases}
    \dfrac{q+1}{m} & \text{if } m \text{ is odd,} \\[6pt]
    \dfrac{2(q+1)}{m} & \text{if } m \text{ is even.}
\end{cases}$$
These places are denoted by $P_{\infty}^1, \ldots, P_{\infty}^{(q+1)/m}$ 
or $P_{\infty}^1, \ldots, P_{\infty}^{2(q+1)/m}$ according to whether 
$m$ is odd or even, and we define
$$\mathcal{O}_{\infty} = \begin{cases}
    \{P_{\infty}^1, \ldots, P_{\infty}^{(q+1)/m}\} & \text{if } m 
    \text{ is odd,} \\[4pt]
    \{P_{\infty}^1, \ldots, P_{\infty}^{2(q+1)/m}\} & \text{if } m 
    \text{ is even,}
\end{cases}$$
$$D_{\infty} = \begin{cases}
    \displaystyle\sum_{i=1}^{(q+1)/m}P_{\infty}^i & \text{if } m 
    \text{ is odd,} \\[6pt]
    \displaystyle\sum_{i=1}^{2(q+1)/m}P_{\infty}^i & \text{if } m 
    \text{ is even.}
\end{cases}$$

A point $P=(a:b:1) \in \mathcal{X}_{m,q}(\overline{\mathbb{F}_{q^2}})$ 
with $a \neq 0$ will be denoted by $P_{(a,b)}$. Since this point is 
nonsingular, it is the center of a unique place of 
$\overline{\mathbb{F}_{q^2}}(\mathcal{X}_{m,q})$, which will also be 
denoted by $P_{(a,b)}$. By Kummer theory, the only ramified places of 
the extension 
$\overline{\mathbb{F}_{q^2}}(\mathcal{X}_{m,q})/\overline{\mathbb{F}_{q^2}}(x)$ 
are those of $\mathcal{O}_0$ (with ramification index $m$), those of 
$\mathcal{O}_{\infty}$ (with ramification index $m$ or $\frac{m}{2}$, 
according to whether $m$ is odd or even), and $P_{(a,0)}$ with 
$a^{(q+1)/m}+1 = 0$ (with ramification index $m$). By 
\cite[Proposition~3.7.3]{stichtenoth}, we obtain
$$g(\mathcal{X}_{m,q}) = \begin{cases}
    \dfrac{q^2-q+2m-2}{2m} & \text{if } m \text{ is odd,} \\[6pt]
    \dfrac{q^2-2q+2m-3}{2m} & \text{if } m \text{ is even.}
\end{cases}$$

The extension 
$\mathbb{F}_{q^2}(\mathcal{H})/\mathbb{F}_{q^2}(\mathcal{X}_{m,q})$ 
is also a Kummer extension. Indeed, 
$\mathbb{F}_{q^2}(\mathcal{H}) = \mathbb{F}_{q^2}(\mathcal{X}_{m,q})(u,v)$, 
where $u^m = x$ and $v = y/u \in \mathbb{F}_{q^2}(\mathcal{X}_{m,q})(u)$. 
By \cite[Proposition~3.7.3]{stichtenoth}, this extension is unramified 
when $m$ is odd, and when $m$ is even it is ramified only at 
$P_{\infty}^i$, $i = 1, \ldots, 2(q+1)/m$, with ramification index 
$2$ at each place. In particular, for any place 
$P_{(a,b)} \in \mathbb{P}_{\mathbb{F}_{q^2}(\mathcal{X}_{m,q})}$ and 
any place $Q_{(A,B)} \in \mathbb{P}_{\mathbb{F}_{q^2}(\mathcal{H})}$ 
lying above $P_{(a,b)}$, and any function 
$f \in \mathbb{F}_{q^2}(\mathcal{X}_{m,q})$, we have 
$v_{P_{(a,b)}}(f) = v_{Q_{(A,B)}}(f)$. This allows us to compute the 
Weierstrass semigroups at the places $P_{(a,b)}$ using expansions in 
terms of a local parameter at $Q_{(A,B)}$.

Throughout the paper, we denote
$$\mathcal{O} = \mathcal{O}_0 \cup \mathcal{O}_{\infty} \cup 
\mathcal{O}_m,$$
where $\mathcal{O}_m := \{P_{(a,0)} : a^{(q+1)/m} + 1 = 0\}$, and 
$\mathcal{O}' = \{P_{(a,b)} : 2a^{(q+1)/m} + 1 = 0\}$.

Note that, in this article, we consider automorphisms as certain maps acting on the (finite) points in the curve, not as maps acting on the function field.

\begin{proposition}\label{automorfismos}
The automorphism group $\Aut(\mathcal{X}_{m,q})$ contains a subgroup 
$G$ of order $2(q+1)^2/m$, isomorphic to a semidirect product of 
$\mathbb{Z}/2 \ltimes
(\mathbb{Z}/\frac{q+1}{m} \times \mathbb{Z}/(q+1))$, generated by 
$A \cup \{\theta_2\}$, where
$$A := \{\theta_{\gamma,\delta}(x,y)=(\gamma x, \delta y) \mid 
\gamma^{(q+1)/m}=\delta^{q+1}=1\}, \quad 
\theta_2(x,y) = \left(\frac{y^m}{x}, y \right).$$
\end{proposition}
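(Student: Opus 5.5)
The plan is to check everything by direct computation on the affine equation $y^{q+1}+x^{2n}+x^{n}=0$, where throughout I write $n=(q+1)/m$. Each map I exhibit will be a birational self-map of $\mathcal{X}_{m,q}$ defined over $\overline{\mathbb{F}_{q^2}}$. It therefore induces an automorphism of the function field, and hence of the nonsingular model. So it suffices to show three things: each generator preserves the equation and is invertible; the generated group has order $2n(q+1)$; and it has the stated semidirect product structure.

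\textbf{Step 1 (the maps are automorphisms).} For $\theta_{\gamma,\delta}$ with $\gamma^{n}=\delta^{q+1}=1$ we have $(\delta y)^{q+1}=y^{q+1}$, $(\gamma x)^{n}=x^{n}$ and $(\gamma x)^{2n}=x^{2n}$, so the equation is preserved. The inverse is $\theta_{\gamma^{-1},\delta^{-1}}$. The set $A$ is a group isomorphic to $\mathbb{Z}/n\times\mathbb{Z}/(q+1)$ via $(\gamma,\delta)$, since $\gamma$ and $\delta$ range over the cyclic groups $\mu_n$ and $\mu_{q+1}$ (both contained in $\mathbb{F}_{q^2}^*$ because $n\mid q+1$). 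Distinct pairs $(\gamma,\delta)$ give distinct maps, so $|A|=n(q+1)$.

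For $\theta_2$, put $x'=y^m/x$. Then $x'^{\,n}=y^{q+1}/x^{n}=-(x^{n}+1)$, which gives
\[
x'^{\,2n}+x'^{\,n}=(x^n+1)^2-(x^n+1)=x^n(x^n+1)=-y^{q+1},
\]
so the equation is preserved. Moreover $\theta_2^2(x,y)=(y^m/(y^m/x),y)=(x,y)$, so $\theta_2$ is an involution. In particular it is birational.

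\textbf{Step 2 (group structure).} I compute the conjugate of a diagonal map by $\theta_2$:
\[
\theta_2\circ\theta_{\gamma,\delta}\circ\theta_2(x,y)=\theta_2\!\left(\gamma\frac{y^m}{x},\,\delta y\right)=\left(\frac{\delta^m y^m}{\gamma y^m/x},\,\delta y\right)=\theta_{\delta^m\gamma^{-1},\delta}(x,y).
\]
Here $(\delta^m)^n=\delta^{q+1}=1$, so $\delta^m\gamma^{-1}\in\mu_n$ and the conjugate lies in $A$. Hence $A$ is normalized by $\theta_2$, and $\langle A,\theta_2\rangle=A\cup A\theta_2$. Next, $\theta_2\notin A$: the first coordinate of $\theta_2$ is $y^m/x$, which is not a constant multiple of $x$. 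Indeed $y^m$ is not in $\overline{\mathbb{F}_{q^2}}(x)$, because $[\,\overline{\mathbb{F}_{q^2}}(x,y^m):\overline{\mathbb{F}_{q^2}}(x)\,]=n>1$ by Kummer theory, as $x^n(x^n+1)$ has a simple zero at the roots of $x^n+1$. It follows that $A\cap\langle\theta_2\rangle=1$ and $|G|=2|A|=2n(q+1)=2(q+1)^2/m$. Finally, $G=A\rtimes\langle\theta_2\rangle\cong\mathbb{Z}/2\ltimes(\mathbb{Z}/n\times\mathbb{Z}/(q+1))$, where the action on $A$ is $(\gamma,\delta)\mapsto(\delta^m\gamma^{-1},\delta)$.

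\textbf{Main obstacle.} The only delicate point is the passage from "maps on finite points" to genuine automorphisms. Per the paper's convention this is handled by noting that each map is birational with a birational inverse, and so extends uniquely to the nonsingular model. A secondary check is the faithfulness of the action, i.e.\ that no nontrivial element of $G$ acts as the identity; this follows from the explicit coordinate formulas together with $\theta_2\notin A$.
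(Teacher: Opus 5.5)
Your proof is correct and follows the paper's argument. Both check that $\theta_{\gamma,\delta}$ and $\theta_2$ preserve the equation, that $\theta_2^2=\mathrm{id}$, and that $\theta_2\theta_{\gamma,\delta}\theta_2=\theta_{\delta^m/\gamma,\delta}$, so that $\theta_2$ normalizes $A$. You also verify $|A|=(q+1)^2/m$ and $\theta_2\notin A$ (via $y^m\notin\overline{\mathbb{F}_{q^2}}(x)$); the paper leaves these implicit, but they are what actually give $|G|=2(q+1)^2/m$.
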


\begin{proof}
The maps $\theta_{\gamma,\delta}$ clearly preserve the equation of the 
curve, as does $\theta_2$, since
\[
y^{q+1} + \left(\frac{y^m}{x}\right)^{2(q+1)/m} + 
\left(\frac{y^m}{x}\right)^{(q+1)/m}
=
y^{q+1} + \frac{y^{2(q+1)}}{x^{2(q+1)/m}} + 
\frac{y^{q+1}}{x^{(q+1)/m}}
\]
\[
=
\frac{y^{q+1}}{x^{2(q+1)/m}} \left( x^{2(q+1)/m} + y^{q+1} + 
x^{(q+1)/m} \right) = 0.
\]
Moreover, $\theta_2^2 = \mathrm{id}$, and
\[
\theta_2(\theta_{\gamma,\delta}(x,y)) =
\theta_2(\gamma x, \delta y) =
\left(\frac{\delta^m}{\gamma} \cdot \frac{y^m}{x}, \delta y\right) =
\theta_{\delta^m/\gamma,\,\delta}(\theta_2(x,y)),
\]
with $(\delta^m/\gamma)^{(q+1)/m} = \delta^{q+1}/\gamma^{(q+1)/m} = 1$. 
Hence $G$ is the semidirect product of $\{\mathrm{id},\theta_2\}$ 
acting on $A$.
\end{proof}

The elements of $A$ act transitively on $\mathcal{O}_0$, 
$\mathcal{O}_{\infty}$, and $\mathcal{O}_m$, while $\theta_2$ maps 
$\mathcal{O}_0$ to $\mathcal{O}_m$. Therefore,
$$H(P_0^i) = H(P_{(a,0)}), \quad i = 1, \ldots, \frac{q+1}{m},\quad 
a^{(q+1)/m}+1=0.$$

\begin{proposition}\label{divi-xy1}
Let $m$ be odd and $a, b \in \mathbb{F}_{q^2}^*$. We have
$$(x-a) = \begin{cases}
    \displaystyle\sum_{\xi^{q+1}=1} P_{(a,\xi b)} - m D_{\infty} 
    & \text{if } a^{(q+1)/m}+1\neq 0, \\[6pt]
    (q+1)P_{(a,0)} - m D_{\infty} 
    & \text{if } a^{(q+1)/m}+1 = 0,
\end{cases}$$
$$(x) = m\sum_{i=1}^{(q+1)/m}P_0^i - m D_{\infty},$$
$$(y) = \sum_{i=1}^{(q+1)/m}P_0^i + 
\sum_{a^{(q+1)/m}+1=0}P_{(a,0)} - 2D_{\infty},$$
$$(y-b) = E_b - 2D_{\infty},$$
where $E_b$ is an effective divisor of degree $\frac{2(q+1)}{m}$, with
$$\mathrm{Supp}(E_b) = 
\{P_{(a,b)} \mid b^{q+1}+a^{2(q+1)/m}+a^{(q+1)/m}=0\}$$
and
$$v_{P_{(a,b)}}(E_b) = \begin{cases}
    2 & \text{if } 2a^{(q+1)/m}+1=0, \\
    1 & \text{otherwise.}
\end{cases}$$
\end{proposition}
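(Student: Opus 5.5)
We compute all four divisors from the ramification data of the Kummer extension $\mathbb{F}_{q^2}(\mathcal{X}_{m,q})/\mathbb{F}_{q^2}(x)$ of degree $q+1$ recorded above, together with the fundamental identity $\sum e_if_i=q+1$. Write $n=(q+1)/m$; since $m$ is odd, each place of $\mathcal{O}_\infty$ has ramification index $m$ over $x=\infty$, and there are $n$ of them.

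\textbf{Pole divisors.} Since $x$ has a simple pole at $\infty$ in $\mathbb{P}^1$, we get $v_{P_\infty^i}(x)=-m$ for each $i$. The curve equation gives $(q+1)v_{P_\infty^i}(y)=2n\,v_{P_\infty^i}(x)=-2nm=-2(q+1)$, so $v_{P_\infty^i}(y)=-2$. Neither $x$ nor $y$ has a pole at an affine point. Hence the pole parts are $-mD_\infty$ for $x$ and for $x-a$, and $-2D_\infty$ for $y$ and for $y-b$. As a check on degrees, $\deg(x)_\infty=mn=q+1=[\,\mathbb{F}_{q^2}(\mathcal{X}_{m,q}):\mathbb{F}_{q^2}(x)]$ and $\deg(y)_\infty=2n$.

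\textbf{Zeros of $x$ and $x-a$.} Over $x=0$ lie exactly the $n$ places of $\mathcal{O}_0$, each with ramification index $m$, so $(x)_0=mD_0$. Now fix $a\neq 0$.
\begin{itemize}
\item If $a^n+1\neq 0$, then $c=-(a^{2n}+a^n)\neq 0$ and $x=a$ is unramified. The $q+1$ points $(a,\xi b)$ with $\xi^{q+1}=1$, where $b^{q+1}=c$, are distinct and nonsingular, so each zero of $x-a$ is simple.
\item If $a^n+1=0$, the unique place $P_{(a,0)}$ is totally ramified, giving $(q+1)P_{(a,0)}$.
\end{itemize}

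\textbf{Zeros of $y$.} Zeros of $y$ occur only where $x=0$ or $x^n+1=0$. At $P_0^i$ we have $v(x)=m$ and $x^n+1$ is a unit, so $(q+1)v(y)=nm$ and $v(y)=1$. At $P_{(a,0)}$ with $a^n+1=0$, $x-a$ is a local parameter to the power $q+1$, and $x^n+1$ has a simple zero in $x-a$. Hence $(q+1)v(y)=q+1$ and $v(y)=1$. The degree is $n+n=2n$, matching the pole divisor.

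\textbf{Zeros of $y-b$.} For $b\neq 0$, the zeros of $y-b$ are the points $(a,b)$ with $a^{2n}+a^n+b^{q+1}=0$. This equation has $2n$ roots in $a$ counted with multiplicity, so $\deg E_b=2n$, consistent with the pole divisor. Here $a\neq 0$ and $a^n+1\neq 0$ because $b\neq 0$, so each such point lies over an unramified point of $x$.

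The main step is the local multiplicity. At an unramified point $x-a$ is a local parameter, and we view $y$ as a function of $x$ via the implicit relation $y^{q+1}=-(x^{2n}+x^n)$. Differentiating gives $(q+1)y^q\,dy/dx=-(2n x^{2n-1}+n x^{n-1})=-n x^{n-1}(2x^n+1)$. Since $q+1\equiv 1 \pmod p$ and $b\neq 0$, the derivative $dy/dx$ at $(a,b)$ vanishes exactly when $2a^n+1=0$; $p\nmid n$ and $a\neq 0$ are used here. Otherwise $v(y-b)=1$. When $2a^n+1=0$, the polynomial $t^2+t+b^{q+1}$ in $t=x^n$ has the double root $t=-1/2$, and $p$ odd gives $t\mapsto a$ separable. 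So $a$ is a double root of $x^{2n}+x^n+b^{q+1}$, hence $v(y-b)\ge 2$. The remaining roots $a'$ with $a'^n=-1/2$ are simple roots, so they account for multiplicity $1$ each.

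To get $v(y-b)=2$ exactly, we count: the degree $2n$ equals the sum over the distinct roots of their polynomial multiplicities, and $v(y-b)$ at each point should be at least that multiplicity. Only $b$ is fixed, since each $a$ gives one point $P_{(a,b)}$. Combining $\sum v=2n$ with $v\ge$ polynomial multiplicity forces equality, which gives $v=2$ when $2a^n+1=0$ and $v=1$ otherwise. Making this inequality precise, by expanding $y-b$ in the local parameter $x-a$, is the step needing care.
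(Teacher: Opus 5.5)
Your proposal is correct and follows the same route as the paper: the ramification indices of $x$ give $(x)$ and $(x-a)$, the curve equation gives $(y)$, and for $(y-b)$ the key fact is that $a$ is a multiple root of $x^{2n}+x^n+b^{q+1}$ exactly when $2a^n+1=0$. The step you flag as needing care closes at once: $\prod_{\xi^{q+1}=1}(y-\xi b)=-(x^{2n}+x^n+b^{q+1})$, and only the factor with $\xi=1$ vanishes at the unramified place $P_{(a,b)}$, so $v_{P_{(a,b)}}(y-b)$ equals the multiplicity of $a$ as a root and no degree count is needed; separately, when $2a^n+1=0$ one has $b^{q+1}=\tfrac14$ and the polynomial is $(x^n+\tfrac12)^2$, so every root is double and your remark about simple ``remaining roots'' with $a'^n=-1/2$ is a slip (harmless to your count).
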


\begin{proof}
The formulas for $(x)$ and $(x-a)$ follow from the ramification 
indices of the extension 
$\mathbb{F}_{q^2}(\mathcal{X}_{m,q})/\mathbb{F}_{q^2}(x)$. The formula 
for $(y)$ follows from 
$y^{q+1}=-x^{(q+1)/m}(x^{(q+1)/m}+1)$. For $(y-b)$, we note that 
$a$ is a multiple root of $x^{2(q+1)/m}+x^{(q+1)/m}+b^{q+1}$ if and 
only if $2a^{(q+1)/m}+1=0$.
\end{proof}

\begin{proposition}\label{divi-xy2}
Let $m$ be even and $a, b \in \mathbb{F}_{q^2}^*$. We have
$$(x-a) = \begin{cases}
    \displaystyle\sum_{\xi^{q+1}=1} P_{(a,\xi b)} - 
    \frac{m}{2} D_{\infty} 
    & \text{if } a^{(q+1)/m}+1\neq 0, \\[6pt]
    (q+1)P_{(a,0)} - \frac{m}{2} D_{\infty} 
    & \text{if } a^{(q+1)/m}+1 = 0,
\end{cases}$$
$$(x) = m\sum_{i=1}^{(q+1)/m}P_0^i - \frac{m}{2} D_{\infty},$$
$$(y) = \sum_{i=1}^{(q+1)/m}P_0^i + 
\sum_{a^{(q+1)/m}+1=0}P_{(a,0)} - D_{\infty},$$
$$(y-b) = E_b - D_{\infty},$$
where $E_b$ is an effective divisor of degree $\frac{2(q+1)}{m}$ with
$$\mathrm{Supp}(E_b) = 
\{P_{(a,b)} \mid b^{q+1}+a^{2(q+1)/m}+a^{(q+1)/m}=0\}$$
and
$$v_{P_{(a,b)}}(E_b) = \begin{cases}
    2 & \text{if } 2a^{(q+1)/m}+1=0, \\
    1 & \text{otherwise.}
\end{cases}$$
\end{proposition}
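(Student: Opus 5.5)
We follow the same route as for Proposition~\ref{divi-xy1}, adapting it to the ramification data for $m$ even. We first compute the valuations of $x$ and $y$ at the places of $\mathcal{O}_\infty$. There are $2(q+1)/m$ such places, each with ramification index $m/2$ in $\mathbb{F}_{q^2}(\mathcal{X}_{m,q})/\mathbb{F}_{q^2}(x)$. Since $v_{\infty}(x)=-1$ on $\mathbb{P}^1$, we get $v_{P_\infty^i}(x)=-m/2$ for every $i$. The equation $y^{q+1}=-x^{(q+1)/m}(x^{(q+1)/m}+1)$ then gives $(q+1)v_{P_\infty^i}(y)=\frac{2(q+1)}{m}\cdot(-\frac m2)=-(q+1)$, so $v_{P_\infty^i}(y)=-1$. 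This accounts for the pole parts $-\frac m2 D_\infty$ and $-D_\infty$. As a consistency check, $\deg(\frac m2 D_\infty)=q+1=[\mathbb{F}_{q^2}(\mathcal{X}_{m,q}):\mathbb{F}_{q^2}(x)]$ and $\deg D_\infty=2(q+1)/m=[\mathbb{F}_{q^2}(\mathcal{X}_{m,q}):\mathbb{F}_{q^2}(y)]$; the latter degree is read off from the equation, which has degree $2(q+1)/m$ in $x$.

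Next we treat the zeros of $x-a$ and $x$ using the ramification description for the Kummer extension (Proposition~3.7.3 of \cite{stichtenoth}).
\begin{itemize}
\item If $a\neq0$ and $a^{(q+1)/m}+1\ne0$, the place $x=a$ of $\mathbb{P}^1$ is unramified. It splits into the $q+1$ points $P_{(a,\xi b)}$, where $b$ is any root of $y^{q+1}=-a^{(q+1)/m}(a^{(q+1)/m}+1)$. Each of these has valuation $1$.
\item If $a^{(q+1)/m}+1=0$, the place is totally ramified, since $\gcd(1,q+1)=1$. This gives $(q+1)P_{(a,0)}$.
\item For $x$ itself, the $(q+1)/m$ places of $\mathcal{O}_0$ each have ramification index $m$, giving $m D_0$.
\end{itemize}

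For $(y)$, the zeros of $y$ lie over $x=0$ and over the roots of $x^{(q+1)/m}+1$.
\begin{itemize}
\item At $P_0^i$ we have $(q+1)v(y)=\frac{q+1}{m}\cdot m$, so $v(y)=1$.
\item At $P_{(a,0)}$ we have $(q+1)v(y)=v(x^{(q+1)/m}+1)=q+1$, so $v(y)=1$.
\end{itemize}
Degrees balance: $\frac{q+1}{m}+\frac{q+1}{m}=\frac{2(q+1)}{m}$.

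Finally, $y-b$ with $b\neq0$ has the same poles as $y$, so $(y-b)=E_b-D_\infty$ with $E_b$ effective of degree $2(q+1)/m$. The zeros of $y-b$ lie at affine points $P_{(a,b)}$ with $a\ne0$. These are nonsingular, and the covering $y$ restricted there is given by the polynomial $F_b(x)=x^{2(q+1)/m}+x^{(q+1)/m}+b^{q+1}$. The multiplicity of $P_{(a,b)}$ in $E_b$ equals the multiplicity of $a$ as a root of $F_b$: each root $a$ of $F_b$ yields exactly one point with $y=b$, and $v_{P_{(a,b)}}(y-b)$ equals the root multiplicity when $x-a$ is a local parameter there. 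Justifying this last claim is the main obstacle. It holds because $P_{(a,b)}$ is unramified over $x$ when $a^{(q+1)/m}\ne-1$, which is forced since $b\ne0$.

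It remains to compute these multiplicities.
\begin{itemize}
\item We have $F_b'(x)=\frac{q+1}{m}x^{(q+1)/m-1}(2x^{(q+1)/m}+1)$, and $\frac{q+1}{m}$ is prime to $p$. Hence $a$ is a multiple root exactly when $2a^{(q+1)/m}+1=0$.
\item In that case $F_b''(a)=\frac{q+1}{m}a^{(q+1)/m-2}\cdot 2\frac{q+1}{m}a^{(q+1)/m}\neq0$, using $p$ odd. So the root is exactly double, giving valuation $2$, and all other roots are simple.
\end{itemize}
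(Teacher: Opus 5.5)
Your proposal is correct and follows the same route as the paper. The paper's proof for the odd case, invoked here by analogy, obtains $(x)$ and $(x-a)$ from the Kummer ramification data, $(y)$ from $y^{q+1}=-x^{(q+1)/m}(x^{(q+1)/m}+1)$, and the multiplicities in $E_b$ from the derivative criterion for multiple roots of $x^{2(q+1)/m}+x^{(q+1)/m}+b^{q+1}$. You supply details the paper leaves implicit: the second-derivative check that these roots are exactly double (valid because $m$ even forces $p$ odd), and the link between root multiplicity and $v_{P_{(a,b)}}(y-b)$. That link is completed by noting that $y-\xi b$ is a unit at $P_{(a,b)}$ for $\xi\neq 1$, so $v_{P_{(a,b)}}(y-b)=v_{P_{(a,b)}}(F_b(x))$ with $x-a$ a local parameter.
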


\begin{proof}
The proof follows analogously to that of 
Proposition~\ref{divi-xy1}.
\end{proof}
	
	\section{\texorpdfstring
{Weierstrass semigroups at the places of $\mathcal{O}$}
{Weierstrass semigroups}}\label{sec:Weierstrass}
	In this section, we present the Weierstrass semigroups at the places of $\mathcal{O}$. In the first subsection, we study the genera of some numerical semigroups, and in the second one we determine that these are indeed the Weierstrass semigroups at the places of $\mathcal{O}$, by constructing explicit functions with the desired pole divisor.
	
	\subsection{The genera of some numerical semigroups}

Denote
\[
S(q,m) \coloneqq \left\langle q+1,\, q,\, \ldots,\, q+1-\left\lfloor\frac{m}{2}\right\rfloor \right\rangle
+ \left\langle q+1-m \right\rangle,
\]
\[
T(q,m) \coloneqq \begin{cases}
    \left\langle q+1 \right\rangle
    + \left\langle q, q-2, \ldots, q+1-m \right\rangle
    & \text{if } m \text{ is odd,} \\[4pt]
    \left\langle q, q-1, \ldots, q+1-\tfrac{m}{2} \right\rangle
    + \left\langle \tfrac{q+1}{2} \right\rangle
    & \text{if } m \text{ is even.}
\end{cases}
\]

\begin{proposition}\label{genus-P0}
    The genus of $S(q,m)$ is at most $g = g(\mathcal{X}_{m,q})$.
\end{proposition}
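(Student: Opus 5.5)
The plan is to bound the genus of $S(q,m)$ from above by exhibiting many explicit elements of $S(q,m)$. The genus of a numerical semigroup is its number of gaps, so it is enough to show that the set of positive integers \emph{not} covered by some explicit subset of $S(q,m)$ has at most $g$ elements. Throughout I write $q+1=nm$ and $\ell=\lfloor m/2\rfloor$, and I compare the count with the genus formula from Section~\ref{sec:results}.

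\textbf{Explicit elements.} Take a sum of $a$ generators from $\{q+1,q,\dots,q+1-\ell\}$ and $b$ copies of $q+1-m$, and put $k=a+b$. Such sums realize every integer
\[
N = k(q+1) - bm - r, \qquad 0\le b\le k,\quad 0\le r\le (k-b)\ell .
\]
So for fixed $k$ and $b$ the semigroup contains the whole interval $I_{k,b}=[\,k(q+1)-bm-(k-b)\ell,\;k(q+1)-bm\,]$. I will only use these elements. Since we want an upper bound on the genus, it does no harm to ignore any other representations.

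\textbf{Counting uncovered integers.} The intervals $I_{k,b}$ are anchored at the points $k(q+1)-bm=(kn-b)m$, which are multiples of $m$. So I will count the gaps inside each window $((t-1)m,\,tm]$ separately. Write $t=kn-b$.
\begin{itemize}
\item The window is fully covered as soon as some admissible pair $(k,b)$ has $kn-b=t$, $0\le b\le k$ and $(k-b)\ell\ge m-1$.
\item Otherwise the number of uncovered integers in the window is $m-1-\max_{(k,b)}(k-b)\ell$, with the maximum taken over admissible pairs, and with the convention that the window is entirely uncovered if no pair exists.
\end{itemize}
For a given $t$, the admissible $k$ satisfy $t/n\le k\le t/(n-1)$, and $k-b=k(1-n)+t$. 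So the relevant quantity is $\max_k\bigl(t-k(n-1)\bigr)$ over integers $k$ with $\lceil t/n\rceil\le k\le\lfloor t/(n-1)\rfloor$, that is, $t-(n-1)\lceil t/n\rceil$.

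Writing $t=sn-j$ with $0\le j<n$ gives $k-b=s-j$. Hence the window $t$ contributes
\[
\max\bigl(0,\;m-1-(s-j)\ell\bigr) \text{ gaps if } s\ge j, \qquad m \text{ gaps if } s<j,
\]
where the second case is corrected by $-1$ because the endpoint $tm$ itself is excluded in the appropriate range. Summing, only pairs with $s-j\in\{0,1\}$ (and $s-j=2$ when $m$ is even, since then $2\ell=m$) contribute partial gaps, and pairs with $s<j$ contribute full windows.

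\textbf{Summation and the main obstacle.} The total is a finite double sum over $s\ge 1$ and $0\le j\le n-1$. Its full-window part is about $m\binom{n}{2}$, and the boundary terms are of size about $n(m-1)$ and $n(m-1-\ell)$. I expect this bookkeeping to be the main obstacle: the off-by-one conventions at the window endpoints, together with the split into $m$ odd ($2\ell=m-1$, so $k-b\ge2$ already closes the window) versus $m$ even ($2\ell=m$), must produce exactly the two genus formulas $\frac{q^2-q+2m-2}{2m}$ and $\frac{q^2-2q+2m-3}{2m}$, or something smaller. I would first carry out the computation for $m$ odd, checking it against $m=3$, where the semigroup should reproduce the known case of \cite{BMV}. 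I would then redo it for $m$ even, where the extra cancellation at $s-j=2$ should account for the $-q-1$ difference in the numerator. If the count comes out strictly below $g$, that still suffices for the proposition. If it comes out above $g$, that would signal that I have missed covered integers, and I would then also include sums that mix both types of generator with larger $b$.
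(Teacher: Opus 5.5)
Your setup is sound, and in fact sharper than you claim. $S(q,m)\setminus\{0\}$ is exactly the union of the intervals $I_{k,b}$, and your per-window counts are exact, not just upper bounds. To see this, note that passing from $t$ to $t+1$ raises $s-j$ by at most $1$: it rises by $1$ if $j\ge 1$, and becomes $s+2-n\le s$ if $j=0$. Hence an interval anchored at $t'm>tm$ has left endpoint at least $tm-(s-j)\ell+(t'-t)(m-\ell)$. So it reaches no further left than the one anchored at $tm$, and does not enter the window at all when $s<j$.

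The gap is that you stop exactly where the content of the proposition lies. The summation is never carried out, and you only \emph{expect} it to give $g$. This cannot be waved through, because there is no slack. $S(q,m)$ is contained in the Weierstrass semigroup $H(P_0^1)$; the paper shows this with explicit functions, independently of this bound. So the genus of $S(q,m)$ is at least $g$, and your count must come out to exactly $g$.

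Moreover, the bookkeeping you sketch is partly wrong:
\begin{itemize}
\item There is no ``$-1$ correction'' for windows with $s<j$. They contain no element of $S(q,m)$ and contribute exactly $m$ gaps. Subtracting anything there would undercount gaps, which is fatal for an upper bound.
\item Windows with $s-j=2$ contribute $\max(0,m-1-2\ell)=0$ gaps for \emph{both} parities of $m$, so they play no role in the even/odd distinction.
\item The full-window part is $m\binom{n-1}{2}$, not about $m\binom{n}{2}$.
\end{itemize}

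The correct count, with $t=sn-j$, $s\ge 1$, $0\le j\le n-1$, is:
\begin{itemize}
\item the $\binom{n-1}{2}$ windows with $1\le s<j\le n-1$ contribute $m$ gaps each;
\item the $n-1$ windows with $s=j\ge 1$ contribute $m-1$ each;
\item the $n$ windows with $s=j+1$, $0\le j\le n-1$, contribute $m-1-\ell$ each;
\item all other windows contribute nothing.
\end{itemize}
The total $m\binom{n-1}{2}+(n-1)(m-1)+n(m-1-\ell)$ equals $\frac{mn^2-3n+2}{2}$ for $m$ odd and $\frac{mn^2-4n+2}{2}$ for $m$ even. These are exactly the two genus formulas after substituting $q=mn-1$. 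The difference $\frac{q+1}{2m}=\frac n2$ between them comes entirely from the $s=j+1$ windows, where $m-1-\ell$ is $\frac{m-1}{2}$ or $\frac m2-1$.

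Once this is written out, your argument is complete and genuinely different from the paper's. The paper fixes the multiplicity $a=q+1-m$ and picks, for each residue $v$, an element $f(v)\in S(q,m)$ with $f(v)\equiv v\pmod a$. It then bounds the genus by $\sum_v\lfloor f(v)/a\rfloor$, which requires case distinctions: $m$ even, $m$ odd with $m\neq\frac{q+1}{2}$, and $m=\frac{q+1}{2}$ odd. Your windows of length $m$ exploit that every anchor $k(q+1)-bm$ is a multiple of $m$. This gives an exact description of the gap set with almost no casework. On the other hand, the paper's residue-class template is the one it reuses for $T(q,m)$ and for the semigroups $Q(I)$ when $m=4$.
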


\begin{proof}
    Let $a \coloneqq (q+1)-m = \min\left(S(q,m) \smallsetminus \{0\}\right)$.
    Following the ideas in \cite{AlgorithmFrobeniusTree}, we will present a
    function $f \colon \{0,\ldots,a-1\} \to S(q,m)$ such that, for each
    $v \in \{0,\ldots,a-1\}$, $f(v) \equiv v \pmod{a}$. In particular,
    $S(q,m) \supseteq f(v) + a\mathbb{N}$. We will then obtain
    \begin{align*}
        g\left(S(q,m)\right)
        &= \left|\bigcup_{v=1}^{a-1}
           \left\{ w \in \mathbb{N} \mid w \notin S(q,m),\
           w \equiv v \pmod{a} \right\}\right| \\
        &= \sum_{v=1}^{a-1}
           \left|\left\{ k \in \mathbb{N} \mid v+ka \notin S(q,m)
           \right\}\right| \\
        &\le \sum_{v=1}^{a-1}
             \left|\left\{ k \in \mathbb{N} \mid v+ka < f(v)
             \right\}\right| \\
        &= \sum_{v=1}^{a-1} \left\lfloor \frac{f(v)}{a} \right\rfloor,
    \end{align*}
    and $f$ will be chosen so that
    $\sum_{v=1}^{a-1}\lfloor\frac{f(v)}{a}\rfloor = g$.

    Define $f(0) = 0$. For $v \in \{1,\ldots,a-1\}$, we define $f(v)$
    by cases.

    \begin{itemize}
        \item \textbf{Suppose that $v \ge m - \lfloor\frac{m}{2}\rfloor$.}
        Denote $k \coloneqq \lceil\frac{v}{m}\rceil \ge 1$.
        We claim that $v + ka \in S(q,m)$.
        It follows that
        $k\!\left(m-\lfloor\tfrac{m}{2}\rfloor\right) \le v \le km$.
        The second inequality comes from the definition of $k$.
        For the first inequality, if $k=1$, we already have
        $m - \lfloor\frac{m}{2}\rfloor \le v$ by hypothesis.
        Now suppose $k \ge 2$; then
        \begin{align*}
            v - k\!\left(m-\left\lfloor\tfrac{m}{2}\right\rfloor\right)
            \ge v - k\tfrac{m+1}{2}
            = (v-km) + k\tfrac{m-1}{2}
            \ge -(m-1) + k\tfrac{m-1}{2} \ge 0.
        \end{align*}
        Hence there exist $e_0, \ldots, e_{\lfloor m/2 \rfloor} \in \mathbb{N}$
        such that $\sum_{i=0}^{\lfloor m/2 \rfloor} e_i = k$ and
        $\sum_{i=0}^{\lfloor m/2 \rfloor} e_i
        \!\left(m - \lfloor\tfrac{m}{2}\rfloor + i\right) = v$.
        Thus,
        \begin{align*}
            v + ka
            &= \sum_{i=0}^{\lfloor m/2 \rfloor} e_i
               \!\left(m - \left\lfloor\tfrac{m}{2}\right\rfloor + i + a\right)
             = \sum_{i=0}^{\lfloor m/2 \rfloor} e_i
               \!\left((q+1) - \left\lfloor\tfrac{m}{2}\right\rfloor + i\right)
             \in S(q,m).
        \end{align*}
        So we may define $f(v) \coloneqq v + ka$.

        \item \textbf{Suppose that $v < m - \lfloor\frac{m}{2}\rfloor$.}
        We claim that
        $v + \left(\frac{q+1}{m}+1\right)a \in S(q,m)$.
        It follows that
        $\frac{q+1}{m}\!\left(m-\lfloor\tfrac{m}{2}\rfloor\right)
        \le v+a \le \frac{q+1}{m} \cdot m$.
        The second inequality is $v + a < m + a = q+1$.
        For the first inequality, we proceed by cases.
        \begin{itemize}
            \item If $m$ is even, then
            \[
                v + a - \tfrac{q+1}{m}\!\left(m-\left\lfloor\tfrac{m}{2}
                \right\rfloor\right)
                = v + (q+1) - m - \tfrac{q+1}{2}
                \ge \tfrac{q+1}{2} - m \ge 0,
            \]
            since $m$ is a proper divisor of $q+1$.
            \item If $m$ is odd and $m \neq \frac{q+1}{2}$, then,
            using $m > 1$,
            \begin{align*}
                v + a - \tfrac{q+1}{m}\!\left(m-\left\lfloor\tfrac{m}{2}
                \right\rfloor\right)
                &= v + (q+1) - m - (q+1)\tfrac{1+m^{-1}}{2} \\
                &\ge v + (q+1) - m - (q+1)\tfrac{1+3^{-1}}{2} \\
                &= v + \tfrac{q+1}{3} - m \ge 0.
            \end{align*}
            \item If $m$ is odd and $m = \frac{q+1}{2}$, then
            \[
                v + a - \tfrac{q+1}{m}\!\left(m-\left\lfloor\tfrac{m}{2}
                \right\rfloor\right)
                = v + m - 2\cdot\tfrac{m+1}{2} = v - 1 \ge 0.
            \]
        \end{itemize}
        Hence there exist $e_0,\ldots,e_{\lfloor m/2\rfloor} \in \mathbb{N}$
        such that $\sum_{i=0}^{\lfloor m/2\rfloor} e_i = \frac{q+1}{m}$ and
        $$\sum_{i=0}^{\lfloor m/2\rfloor} e_i
        \!\left(m-\lfloor\tfrac{m}{2}\rfloor+i\right) = v+a.$$
        Thus, as in the previous case, we may define
        $f(v) \coloneqq v + \left(\frac{q+1}{m}+1\right)a \in S(q,m)$.
    \end{itemize}

    \vspace{0.3cm}
    Now we have
    \[
        \sum_{v=1}^{a-1} \left\lfloor\frac{f(v)}{a}\right\rfloor
        = \sum_{v=1}^{m-\lfloor m/2\rfloor-1}
          \left(\frac{q+1}{m}+1\right)
        + \sum_{v=m-\lfloor m/2\rfloor}^{a-1}
          \left\lceil\frac{v}{m}\right\rceil.
    \]
    Note that
    \begin{align*}
        \sum_{v=m-\lfloor m/2\rfloor}^{a-1}
        \left\lceil\frac{v}{m}\right\rceil
        &= \sum_{k=1}^{\frac{q+1}{m}-1} k \cdot
           \sum_{v=m-\lfloor m/2\rfloor}^{a-1}
           \left[k = \left\lceil\tfrac{v}{m}\right\rceil\right] \\
        &= 1 \cdot \sum_{v=m-\lfloor m/2\rfloor}^{m} 1
           + \sum_{k=2}^{\frac{q+1}{m}-1} k \cdot
             \sum_{v=(k-1)m+1}^{km - [k=\frac{q+1}{m}-1]} 1 \\
        &= \left(\left\lfloor\tfrac{m}{2}\right\rfloor+1\right)
           + \sum_{k=2}^{\frac{q+1}{m}-1}
             k\!\left(m - \left[k=\tfrac{q+1}{m}-1\right]\right) \\
        &= \left(\left\lfloor\tfrac{m}{2}\right\rfloor+1\right)
           + m\!\left(\binom{(q+1)/m}{2}-1\right)
           - \left(\tfrac{q+1}{m}-1\right).
    \end{align*}
    Therefore,
    {\small
    \begin{align*}
        \sum_{v=1}^{a-1}\left\lfloor\frac{f(v)}{a}\right\rfloor
        &= \left(\frac{q+1}{m}+1\right)
           \left(m-\left\lfloor\tfrac{m}{2}\right\rfloor-1\right)
         + \left(\left\lfloor\tfrac{m}{2}\right\rfloor+1\right)
         + m\!\left(\binom{(q+1)/m}{2}-1\right)
         - \left(\tfrac{q+1}{m}-1\right),
    \end{align*}
    }
    which equals $g$ after straightforward simplification.
\end{proof}

\begin{proposition}\label{genus-Pinf}
    The genus of $T(q,m)$ is at most $g = g(\mathcal{X}_{m,q})$.
\end{proposition}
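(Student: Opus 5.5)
The plan is to repeat the Apéry-set counting argument from the proof of Proposition~\ref{genus-P0}. Let $a$ be the smallest nonzero element of $T(q,m)$. For each residue $v\in\{1,\dots,a-1\}$ we will exhibit an explicit $f(v)\in T(q,m)$ with $f(v)\equiv v \pmod a$. Then
\[
g(T(q,m))\le \sum_{v=1}^{a-1}\left\lfloor \frac{f(v)}{a}\right\rfloor ,
\]
and the choice of $f$ will be made so that this sum equals $g$. The odd and even cases of $m$ are handled separately, since the generators have a different shape.

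\textbf{Odd $m$.} Here $a=q+1-m$, which is odd because $q+1$ is even and $m$ is odd. Every generator has the form $a+d$ with $d\in D:=\{0,2,4,\dots,m-1\}\cup\{m\}$; the value $d=m$ corresponds to the generator $q+1$. A sum of $k$ generators is therefore $ka+s$, where $s$ is a sum of $k$ elements of $D$.
\begin{itemize}
\item For even $v$, take $k=\lceil v/(m-1)\rceil$. Then $v$ is a sum of $k$ even numbers in $[0,m-1]$, and we set $f(v)=v+ka$.
\item For odd $v\ge m$, use one copy of $m$ and $k-1$ even parts, with $k=1+\lceil (v-m)/(m-1)\rceil$.
\item For odd $v<m$, no such decomposition of $v$ itself exists. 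Instead we represent $v+a$, which is even, or, if needed, $v+ja$ for a suitable $j$, using $(q+1)/m$-type counts. This mirrors the second bullet in the proof of Proposition~\ref{genus-P0}. The needed inequalities reduce to $m\le (q+1)/3$ or to the boundary case $m=(q+1)/2$, treated separately.
\end{itemize}
In each case $\lfloor f(v)/a\rfloor$ is the number of generators used (plus the extra wrap-around term in the last case), provided $f(v)<(k+1)a$. This holds because $v<a$ and the $d$'s are small compared to $a$.

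\textbf{Even $m$.} Here $a=(q+1)/2$, since $q+1-m/2\ge (q+1)/2$ for $m\le (q+1)/2$. The remaining generators are $q-i=a+(a-1-i)$ for $0\le i\le m/2-1$, so they contribute excesses $d\in[a-m/2,\,a-1]$ over $a$. A sum of $k$ such generators covers, modulo $a$, every residue in the interval $[k(a-m/2),\,k(a-1)]$ taken mod $a$. For each residue $v$ we choose the minimal $k$ such that $v+ja$ lies in such an interval for an appropriate $j$. Equivalently, $-v \bmod a$ should lie in $[k,\,km/2]$ after shifting. This gives $f(v)=v+(2k-1)a$ or a similar expression, and $\lfloor f(v)/a\rfloor$ is an explicit function of $\lceil \cdot/(m/2)\rceil$.

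\textbf{Final step and main obstacle.} In both cases we then split the sum $\sum_v\lfloor f(v)/a\rfloor$ into blocks of consecutive $v$ on which the relevant ceiling is constant. We evaluate each block as an arithmetic sum, as was done for $S(q,m)$, and simplify to $(q^2-q+2m-2)/(2m)$ for odd $m$, respectively $(q^2-2q+2m-3)/(2m)$ for even $m$. I expect the main obstacle to be choosing $f(v)$ for the awkward residues. These are the odd $v<m$ in the odd case and the small residues in the even case, where the naive number of generators must be increased by a wrap-around term. One must check simultaneously that the representation exists and that the total count matches $g$ exactly rather than exceeding it. I would first test small instances such as $q=11$ with $m=3,4,6$ to calibrate the wrap-around choice before writing out the general block sums.
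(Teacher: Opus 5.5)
Your framework is the paper's: bound $g(T(q,m))$ by $\sum_v\lfloor f(v)/a\rfloor$ for explicit $f(v)\equiv v \pmod a$. For even $m$, your representative $f(v)=v+(2k-1)a$ with $k=\lceil (a-v)/(m/2)\rceil$ is exactly the paper's choice, and no wrap-around correction is needed there. The gap is in the odd-$m$ case. Your explicit representatives are too large, so the sum exceeds $g$ and the inequality proves nothing. For even $v$ you use only even excesses, giving $k=\lceil v/(m-1)\rceil$. For odd $v\ge m$ you allow at most one copy of the excess $m$. But two copies of the generator $q+1$ give the even excess $2m$, and the correct count is $\lceil v/m\rceil$ for both parities. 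Concretely, for $q=11$, $m=3$ we have $T(11,3)=\langle 9,11,12\rangle$ and $g=19$. Your rule gives $f(6)=33$ and $f(8)=44$, whereas $24=12+12$ and $35=12+12+11$ lie in the same classes modulo $9$. Every element of $T(11,3)$ congruent to $1$ is at least $46$, so your total is at least $21>19$. Your odd branch fails the same way at $v=3m$ once $a>3m$.

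The missing step is to show $v+\lceil v/m\rceil a\in T(q,m)$ for every $m\le v\le a-1$, regardless of parity. The paper does this by induction on $\lceil v/m\rceil$. For $m+1\le v\le 2m$ it uses $2(q+1)$, $(q+1)+\bigl(q+1-(m-2i)\bigr)$ or $q+\bigl(q+1-(m-2i)\bigr)$. For larger $v$ it writes $v+ka=\bigl((v-m)+(k-1)a\bigr)+(q+1)$. You also need to commit to the odd $v<m$ branch, which your proposal leaves as ``a suitable $j$''. The paper takes $f(v)=v+\bigl(\tfrac{q+1}{m}+1\bigr)a$. When $m\ne\tfrac{q+1}{2}$, this is obtained by adding $q+1$ to the element already constructed for the residue $v-m+a\ge m+1$. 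When $m=\tfrac{q+1}{2}$, it is $q+\bigl(q+1-(m-1-v)\bigr)$. Together with $f(v)=v+a$ for even $v<m$, the sum evaluates to $\tfrac{(q+1)^2-3(q+1)+2m}{2m}=g$. Finally, $a=q+1-m$ is even when $p=2$, so you should not rely on $v+a$ being even.
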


\begin{proof}
    The proof is similar to the previous one, though the details are more
    involved. Let $a$ be the smallest element of $T(q,m) \smallsetminus \{0\}$.
    When $m$ is even, for $v \ge 1$ we define
    $$f(v) \coloneqq v + \left(2\left\lceil\frac{a-v}{m/2}\right\rceil - 1\right)a.$$
    When $m$ is odd, for $v \ge 1$ we define
    \[
    f(v) \coloneqq \begin{cases}
        v + \left\lceil\dfrac{v}{m}\right\rceil a
            & \left(v \ge m\right), \\[6pt]
        v + a
            & \left(v < m,\ v \in 2\mathbb{N}\right), \\[6pt]
        v + \left(\dfrac{q+1}{m}+1\right)a
            & \left(v < m,\ v \notin 2\mathbb{N}\right).
    \end{cases}
    \]

    Let us show that $f(v) \in T(q,m)$ for each $1 \le v \le a-1$.

    \medskip
    \noindent\textbf{Case: $m$ is even.}
    We have $a = \frac{q+1}{2}$. Write $a - v = k\frac{m}{2} - s$, where
    $k = \lceil\frac{a-v}{m/2}\rceil \ge 1$ and $0 \le s \le \frac{m}{2}-1$.
    We have that $k \le a-v \le k\frac{m}{2}$. Indeed,
    $$a - v - k = k\left(\tfrac{m}{2}-1\right) - s \ge (k-1)\left(\tfrac{m}{2}-1\right) \ge 0,$$
    and $k\frac{m}{2} \ge \frac{a-v}{m/2}\cdot\frac{m}{2}$ by definition of $k$.
    As before, there exist $e_0, \ldots, e_{m/2-1} \in \mathbb{N}$ such that
    $\sum_{i=0}^{m/2-1} e_i = k$ and
    $\sum_{i=0}^{m/2-1} e_i\!\left(\frac{m}{2}-i\right) = a-v$.
    Therefore $T(q,m)$ contains
    $$\sum_{i=0}^{m/2-1} e_i\!\left(q+1-\tfrac{m}{2}+i\right)
      = k(q+1) - a + v = (2k-1)a + v = f(v).$$

    \medskip
    \noindent\textbf{Case: $m$ is odd.}
    We have $a = (q+1)-m$. We show by induction on $v$ that
    $v + \lceil\frac{v}{m}\rceil a \in T(q,m)$ for each
    $v \in \{m+1, \ldots, a-1\}$.
    Write $v = km - s$, with $k = \lceil\frac{v}{m}\rceil$ and $0 \le s \le m-1$.
    We have two cases:

    \begin{itemize}
        \item \textbf{Case $k = 2$.} So $m+1 \le v \le 2m$.
        When $v = 2m$,
        $$v + 2a = 2(a+m) = 2(q+1) \in T(q,m).$$
        Otherwise $v \le 2m-1$.
        If $v$ is odd, write $v = m + 2i$ for some
        $i \in \{1,\ldots,\frac{m-1}{2}\}$; then
        $$v + 2a = (a+m) + (a+2i) = (q+1) + (q+1-(m-2i)) \in T(q,m).$$
        If $v$ is even, write $v = m-1+2i$ for some
        $i \in \{1,\ldots,\frac{m-1}{2}\}$; then
        $$v + a = (a+m-1) + (a+2i) = (q+1-1) + (q+1-(m-2i)) \in T(q,m).$$

        \item \textbf{Case $k \ge 3$.} So $v \ge (k-1)m+1 \ge 2m+1$, hence
        $v - m \ge m+1$. By the inductive hypothesis,
        $(v-m) + (k-1)a \in T(q,m)$, and therefore
        $$v + ka = (v-m) + (k-1)a + (a+m) \in T(q,m).$$
    \end{itemize}

    Thus $f(v) \in T(q,m)$ for each $m+1 \le v \le a-1$.
    When $v = m$, we have $$f(v) = m + a = q+1 \in T(q,m).$$

    If $1 \le v < m$ and $v$ is even, then
    $f(v) = v + a = q+1-(m-v) \in T(q,m)$.

    If $1 \le v < m$ and $v$ is odd, we consider two cases:

    \begin{itemize}
        \item \textbf{Case $\frac{q+1}{2} \neq m$.} Then $v - m + a \le a-1$
        and
        $$v - m + a \ge 1 - m + a = 1 + (q+1) - 2m \ge 1 + 3m - 2m = 1 + m.$$
        So, as shown above, $f(v) = v - m + a + ka \in T(q,m)$, where
        $$k = \left\lceil\frac{v-m+a}{m}\right\rceil
            = \left\lceil\frac{v}{m}\right\rceil - 1 + \frac{a}{m}
            = \frac{a}{m}.$$
        Thus,
        \begin{equation*}
            v + \left(\frac{q+1}{m}+1\right)a
            = v + \left(\frac{a+m}{m}+1\right)a
            = (v-m+a+ka) + (a+m) \in T(q,m).
        \end{equation*}

        \item \textbf{Case $\frac{q+1}{2} = m$.} Then,
        \begin{equation*}
            v + \left(\frac{q+1}{m}+1\right)a = v + 3m
            = (2m-1) + (v+m+1)
            = (q+1-1) + (q+1-(m-1-v)) \in T(q,m).
        \end{equation*}
    \end{itemize}

    It remains to show that $\sum_{v=1}^{a-1}\lfloor\frac{f(v)}{a}\rfloor = g$.

    When $m$ is even,
    \begin{align*}
        \sum_{v=1}^{a-1}\left\lfloor\frac{f(v)}{a}\right\rfloor
        &= \sum_{v=1}^{a-1}\left(2\left\lceil\frac{a-v}{m/2}\right\rceil-1\right) \\
        &= \sum_{k=1}^{\frac{q+1}{m}}\left(2k-1\right)
           \cdot\sum_{v=1}^{a-1}
           \left[k=\left\lceil\tfrac{a-v}{m/2}\right\rceil\right] \\
        &= \sum_{k=1}^{\frac{q+1}{m}}\left(2k-1\right)
           \cdot\sum_{w=1}^{a-1}
           \left[k=\left\lceil\tfrac{w}{m/2}\right\rceil\right] \\
        &= \sum_{k=1}^{\frac{q+1}{m}}\left(2k-1\right)
           \cdot\sum_{w=(k-1)(m/2)+1}^{k(m/2)-[k=\frac{q+1}{m}]}1 \\
        &= \sum_{k=1}^{\frac{q+1}{m}}\left(2k-1\right)
           \cdot\left(\frac{m}{2}-\left[k=\tfrac{q+1}{m}\right]\right) \\
        &= \left(2\binom{(q+1)/m+1}{2}-\frac{q+1}{m}\right)\frac{m}{2}
           - \left(2\,\frac{q+1}{m}-1\right),
    \end{align*}
    which equals $g$ after straightforward simplification.

    When $m$ is odd,
    \begin{align*}
        \sum_{v=1}^{a-1}\left\lfloor\frac{f(v)}{a}\right\rfloor
        &= \sum_{v=m}^{a-1}\left\lceil\frac{v}{m}\right\rceil
           + \sum_{v=1}^{m-1}
             \left(\frac{q+1}{m}+1\right)^{[v\notin 2\mathbb{N}]} \\
        &= 1 + \sum_{k=2}^{\frac{q+1}{m}-1} k
             \cdot\sum_{v=m+1}^{a-1}
             \left[k=\left\lceil\tfrac{v}{m}\right\rceil\right]
           + \left(\frac{q+1}{m}+1\right)\cdot\frac{m-1}{2}
           + 1\cdot\frac{m-1}{2} \\
        &= 1 + m\!\left(\binom{(q+1)/m}{2}-1\right)
           - \left(\frac{q+1}{m}-1\right)
           + \left(\frac{q+1}{m}+1\right)\cdot\frac{m-1}{2}
           + \frac{m-1}{2},
    \end{align*}
    which equals $g$ after straightforward simplification.
\end{proof}

	\subsection{\texorpdfstring
{The Weierstrass semigroups at the places of $\mathcal{O}$}
{The Weierstrass semigroups at the places of O}}

We can now determine the Weierstrass semigroups of the places of $\mathcal{O}$.
This will be done by providing explicit functions with the desired pole divisor.

\begin{proposition}\label{prop:num-zeros}
    Let $P_0^i$, $i=1,\ldots,\frac{q+1}{m}$, be a place centered at $(0:0:1)$,
    and let $a\in\mathbb{F}_{q^2}$ be such that $a^{(q+1)/m}+1=0$. We have
    $$H(P_0^i) = H(P_{(a,0)}) =
      \left\langle q+1-m,\, q+1-\left\lfloor\tfrac{m}{2}\right\rfloor,\,
      \ldots,\, q+1 \right\rangle.$$
\end{proposition}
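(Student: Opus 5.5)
The plan is to prove the inclusion $S(q,m)\subseteq H(P)$ for one convenient place $P$ by writing down explicit functions, and then to get equality by counting gaps with Proposition~\ref{genus-P0}. By the discussion after Proposition~\ref{automorfismos}, all the places $P_0^i$ and $P_{(a,0)}$ with $a^{(q+1)/m}+1=0$ have the same Weierstrass semigroup. So it suffices to work at $P:=P_{(a,0)}$ for one fixed $a$ with $a^{(q+1)/m}=-1$. Write $c=m$ if $m$ is odd and $c=m/2$ if $m$ is even. Write $d=2$ if $m$ is odd and $d=1$ if $m$ is even, so that $d$ is the order of the pole of $y$ along $D_\infty$.

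\textbf{Step 1 (the generators $q+1-i$, $0\le i\le\lfloor m/2\rfloor$).} By Propositions~\ref{divi-xy1} and~\ref{divi-xy2}, $(x-a)=(q+1)P-cD_\infty$. Also, $y$ has a simple zero at $P$ and a pole of order $d$ at each place of $D_\infty$, and it is regular elsewhere. Consider $f_i:=y^i/(x-a)$.
\begin{itemize}
\item At $P$ its order is $i-(q+1)$.
\item At each $P_\infty^j$ its order is $c-id$, which is $\ge 0$ exactly when $2i\le m$ (for $m$ odd) or $i\le m/2$ (for $m$ even), i.e.\ when $i\le\lfloor m/2\rfloor$.
\item At the remaining places it only has zeros.
\end{itemize}
Hence $(f_i)_\infty=(q+1-i)P$ for $0\le i\le\lfloor m/2\rfloor$.

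\textbf{Step 2 (the generator $q+1-m$).} Take $h:=y^m/\bigl(x(x-a)\bigr)$.
\begin{itemize}
\item At $P$ its order is $m-(q+1)$.
\item At each $P_0^i$ its order is $m-m=0$, since $y$ vanishes simply there and $x$ vanishes to order $m$.
\item Along $D_\infty$ its order is $-md+c+c$. This equals $-2m+2m=0$ for $m$ odd and $-m+m=0$ for $m$ even.
\item At the other places of $\mathcal{O}_m$ it only has zeros.
\end{itemize}
So $(h)_\infty=(q+1-m)P$. Together with Step 1, this shows that every generator of $S(q,m)$ lies in $H(P)$, hence $S(q,m)\subseteq H(P)$.

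\textbf{Step 3 (conclusion).} The Weierstrass gap theorem says $\mathbb{N}\setminus H(P)$ has exactly $g=g(\mathcal{X}_{m,q})$ elements. The inclusion $S(q,m)\subseteq H(P)$ gives $\mathbb{N}\setminus H(P)\subseteq\mathbb{N}\setminus S(q,m)$, so $g\le g(S(q,m))$. Proposition~\ref{genus-P0} gives the reverse inequality $g(S(q,m))\le g$. Therefore the two gap sets coincide, and $H(P)=S(q,m)$.

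The main obstacle is the bookkeeping of valuations at $D_\infty$ in Steps 1 and 2, which differs in the two parity cases because the ramification index there is $m$ or $m/2$. This is exactly where the bound $\lfloor m/2\rfloor$ arises, so I would check those two computations carefully against Propositions~\ref{divi-xy1} and~\ref{divi-xy2}. The genus count itself is already done in Proposition~\ref{genus-P0}.
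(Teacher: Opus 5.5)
Your proposal is correct and follows essentially the same route as the paper: reduce to $P_{(a,0)}$ via the group $G$ of Proposition~\ref{automorfismos}, exhibit $y^i/(x-a)$ for $0\le i\le\lfloor m/2\rfloor$ and $y^m/(x(x-a))$ as functions whose pole divisors are $(q+1-i)P_{(a,0)}$ and $(q+1-m)P_{(a,0)}$, and then conclude $H(P_{(a,0)})=S(q,m)$ by comparing genera with Proposition~\ref{genus-P0}. Your explicit check of the valuations along $D_\infty$ in both parity cases fills in what the paper leaves as ``one can verify.''
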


\begin{proof}
    We already know that $H(P_0^i) = H(P_{(a,0)})$, since these places lie in
    the same orbit under the group $G$ defined in
    Proposition~\ref{automorfismos}. We will then show that the result holds
    for a place $P_{(a,0)}$.

    With the information on the divisors of $x$ and $y$ given in
    Propositions~\ref{divi-xy1} and~\ref{divi-xy2}, one can verify that
    $$\left(\frac{y^i}{x-a}\right)_{\infty} = (q+1-i)P_{(a,0)},
      \quad i=0, \ldots, \left\lfloor \frac{m}{2} \right\rfloor.$$
    Additionally, we have
    $$\left(\frac{y^m}{x(x-a)}\right) =
      m\sum_{\substack{\overline{a}^{(q+1)/m}+1 = 0 \\ \overline{a} \neq a}}
      P_{(\overline{a},0)} - (q+1-m)P_{(a,0)}.$$
    Therefore,
    $$\left\langle q+1-m,\, q+1-\left\lfloor\tfrac{m}{2}\right\rfloor,\,
      \ldots,\, q+1 \right\rangle \subseteq H(P_{(a,0)}).$$
    From Proposition~\ref{genus-P0}, we conclude that $S(q,m)$ and
    $H(P_{(a,0)})$ have the same genus, and since one contains the other,
    the result follows.
\end{proof}
The Weierstrass semigroups at the places of $\mathcal{O}_{\infty}$ depend on
the parity of $m$.

\begin{lemma}\label{Pinf-m-even}
    Let $m$ be even and let $P_{\infty}^i$ be a place centered at $(1:0:0)$.
    There exist a local parameter $\pi$ at $P_{\infty}^i$ and elements
    $\alpha_i, \beta_i \in \mathbb{F}_{q^2}^*$ such that
    $\alpha_i^{2(q+1)/m} = -1$ and
    $$x = \alpha_i \pi^{-m/2} + \beta_i \pi^{(q+1-m)/2} + \mathrm{h.o.t.}$$
\end{lemma}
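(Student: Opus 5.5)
The plan is to take $\pi \coloneqq 1/y$ and expand $x$ in the completion of the function field at $P_\infty^i$. I will first solve the defining equation for $x^{(q+1)/m}$ as a power series in $\pi$, and then extract an $n$-th root, where $n \coloneqq (q+1)/m$.

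\textbf{Step 1: $\pi = 1/y$ is a local parameter.} By Proposition~\ref{divi-xy2} (the case $m$ even), $y$ has a simple pole at every place of $\mathcal{O}_\infty$. Hence $\pi = 1/y$ is a local parameter at $P_\infty^i$. The same proposition gives $v_{P_\infty^i}(x) = -m/2$.

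\textbf{Step 2: expand $w \coloneqq x^n$.} Since $v(x) = -m/2$, we have $v(w) = -(q+1)/2$. The defining equation becomes
\[
w^2 + w + \pi^{-(q+1)} = 0 .
\]
The characteristic $p$ is odd and $(q+1)/2$ is an integer, so we can complete the square in the completion $\overline{\mathbb{F}}_{q^2}((\pi))$:
\[
w = -\tfrac12 + \gamma\,\pi^{-(q+1)/2}\Bigl(1-\tfrac14\pi^{q+1}\Bigr)^{1/2}, \qquad \gamma^2 = -1 .
\]
The sign of $\gamma$ is the one determined by the place $P_\infty^i$, and the square root is the binomial series, which is legitimate because $2$ is invertible. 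This gives
\[
x^n = \gamma\,\pi^{-(q+1)/2}\bigl(1 + u\bigr), \qquad u = -\tfrac{1}{2\gamma}\,\pi^{(q+1)/2} + O(\pi^{q+1}) .
\]

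\textbf{Step 3: take the $n$-th root.} Since $n \mid q+1$, we have $p \nmid n$, so $(1+u)^{1/n}$ exists as a binomial series in $\pi$. The element $x\,\pi^{m/2}$ is a unit whose $n$-th power is $\gamma(1+u)$. Its constant term $\alpha$ therefore satisfies $\alpha^n = \gamma$, and by Hensel's lemma we get $x\,\pi^{m/2} = \alpha(1+u)^{1/n}$. Using $n \cdot (m/2) = (q+1)/2$, this yields
\[
x = \alpha\,\pi^{-m/2} - \frac{\alpha}{2n\gamma}\,\pi^{(q+1)/2 - m/2} + \text{h.o.t.}
\]
The exponent of the second term is exactly $(q+1-m)/2$, which is a positive integer because both $q+1$ and $m$ are even and $m < q+1$. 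Put
\[
\alpha_i \coloneqq \alpha, \qquad \beta_i \coloneqq -\frac{\alpha}{2n\gamma} .
\]
Then $\alpha_i^{2n} = \gamma^2 = -1$, i.e.\ $\alpha_i^{2(q+1)/m} = -1$. Also $\beta_i \neq 0$, since $2n$ is invertible in characteristic $p$.

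\textbf{Step 4: $\alpha_i, \beta_i \in \mathbb{F}_{q^2}^*$.} From $\gamma^2 = -1$ we get $\gamma^4 = 1$. Together with $\alpha^{4n} = 1$ and $4n \mid 2(q+1) \mid q^2-1$, this places $\alpha$, $\gamma$ and $\beta_i$ in $\mathbb{F}_{q^2}$.

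The main obstacle is Step 2. One must check that no term of the expansion of $x^n$ lies strictly between the leading term and $\pi^{(q+1)/2}$ relative to it. This holds because the only correction to $\gamma\,\pi^{-(q+1)/2}$ before order $\pi^{(q+1)/2}$ is the constant $-\tfrac12$. That constant is exactly what produces the second exponent $(q+1-m)/2$, rather than the larger exponent $q+1-m/2$ one would get by ignoring it.
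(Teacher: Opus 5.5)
Your proof is correct, and it follows a somewhat different route from the paper's. Both arguments take $\pi=1/y$ as local parameter via Proposition~\ref{divi-xy2}; the difference is in how the coefficients are found.

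The paper writes $x=\alpha_i\pi^{-m/2}+a_{-m/2+1}\pi^{-m/2+1}+\cdots$ with undetermined coefficients and substitutes this into $\pi^{-(q+1)}+x^{2(q+1)/m}+x^{(q+1)/m}=0$. It then kills $a_{-m/2+1},\dots,a_{(q+1-m)/2-1}$ one at a time. Below order $\pi^{-(q+1)/2}$ only the term $x^{2(q+1)/m}$ contributes. Once the earlier coefficients vanish, its coefficient at $\pi^{-(q+1)+j}$ is $\frac{2(q+1)}{m}\alpha_i^{2(q+1)/m-1}a_{-m/2+j}$, which must therefore be zero. Finally $\beta_i$ is read off from $\frac{2(q+1)}{m}\alpha_i^{2(q+1)/m-1}\beta_i+\alpha_i^{(q+1)/m}=0$.

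You instead pass to $w=x^{n}$ with $n=(q+1)/m$, solve the quadratic $w^2+w+\pi^{-(q+1)}=0$ in closed form in the completion, and recover $x$ by a Hensel/binomial $n$-th root. Your value $\beta_i=-\alpha_i/(2n\gamma)$ agrees with the paper's relation, since $\gamma=\alpha_i^{n}$.

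What your route buys:
\begin{itemize}
\item The whole expansion is explicit.
\item The vanishing of the intermediate coefficients is transparent, because $x^{n}=\gamma\pi^{-(q+1)/2}-\tfrac12+O(\pi^{(q+1)/2})$.
\item The error term is pinned down as $O(\pi^{q+1-m/2})$.
\item You check $\alpha_i,\beta_i\in\mathbb{F}_{q^2}^*$, which the paper leaves implicit.
\end{itemize}
The paper's route is more hands-on and needs no fractional binomial series. The price is the inductive bookkeeping (``proceeding inductively'') of which cross terms of the $2(q+1)/m$-th power can appear before order $\pi^{-(q+1)/2}$.
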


\begin{proof}
    From Proposition~\ref{divi-xy2}, we can choose $\pi = \frac{1}{y}$ for any
    $i = 1, \ldots, \frac{2(q+1)}{m}$. Setting $y = \pi^{-1}$, we expand
    $x = \alpha_i \pi^{-m/2} + a_{-m/2+1}\pi^{-m/2+1} + \cdots$.
    Note that $m$ even implies $p \neq 2$.
    From the equation of the curve, we obtain
    \begin{align*}
        \pi^{-(q+1)}
        &+ \left(\alpha_i \pi^{-m/2} + a_{-m/2+1}\pi^{-m/2+1} + \cdots
          \right)^{\frac{2(q+1)}{m}} \\
        &+ \left(\alpha_i \pi^{-m/2} + a_{-m/2+1}\pi^{-m/2+1} + \cdots
          \right)^{\frac{q+1}{m}} = 0.
    \end{align*}
    By expanding the binomial terms, we get
    \begin{align*}
        \pi^{-(q+1)}
        &+ \Bigl(\alpha_i^{2(q+1)/m}\pi^{-(q+1)}
           + \tfrac{2(q+1)}{m}\,\alpha_i^{2(q+1)/m-1}a_{-m/2+1}
             \pi^{-(q+1)+1} + \textrm{h.o.t.}\Bigr) \\
        &+ \bigl(\alpha_i^{(q+1)/m}\pi^{-(q+1)/2} + \textrm{h.o.t.}\bigr)
         = 0.
    \end{align*}
    We thus conclude that $\alpha_i^{2(q+1)/m} = -1$. Also, the smallest
    exponent of $\pi$ in the second binomial expansion is $-\frac{q+1}{2}$,
    so we must have
    $$\frac{2(q+1)}{m}\,\alpha_i^{2(q+1)/m-1}a_{-m/2+1} = 0
      \Longrightarrow a_{-m/2+1} = 0.$$
    Consequently, the term with the second smallest exponent of $\pi$ in the
    first binomial expansion is
    $\frac{2(q+1)}{m}\,\alpha_i^{2(q+1)/m-1}a_{-m/2+2}\pi^{-(q+1)+2}$,
    and we also obtain $a_{-m/2+2} = 0$. Proceeding inductively, we get
    $$a_{-m/2+1} = \cdots = a_{(q+1-m)/2-1} = 0.$$
    Finally, we define $\beta_i \coloneqq a_{(q+1-m)/2}$, which satisfies
    $$\frac{2(q+1)}{m}\,\alpha_i^{2(q+1)/m-1}\beta_i
      + \alpha_i^{(q+1)/m} = 0. $$
\end{proof}

We will now obtain the Weierstrass semigroups at the places centered at
$(1:0:0)$. We first note that $\alpha_i \neq \alpha_j$ whenever $i \neq j$.
Indeed, as observed in the determination of $\beta_i$ in
Lemma~\ref{Pinf-m-even}, the expansion of $x$ in terms of $\pi$ is uniquely
determined by $\alpha_i$. Since any element of
$\mathbb{F}_{q^2}(\mathcal{X}_{m,q})$ can be written as a rational function
in $x$ and $y$, having $\alpha_i = \alpha_j$ would imply that the rings of
regular functions at $P_{\infty}^i$ and $P_{\infty}^j$ coincide, hence
$P_{\infty}^i = P_{\infty}^j$.

\begin{proposition}\label{prop:num-inf-even}
    Let $m$ be even and let $P_{\infty}^i$, $i = 1, \ldots, \frac{2(q+1)}{m}$,
    be a place centered at $(1:0:0)$. We have
    $$H(P_{\infty}^i) =
      \left\langle \frac{q+1}{2},\, q+1 - \frac{m}{2},\, \ldots,\, q+1
      \right\rangle.$$
\end{proposition}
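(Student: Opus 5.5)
The plan is the same as for Proposition~\ref{prop:num-zeros}. First I show that $T(q,m)\subseteq H(P_\infty^i)$ for $m$ even. Then I conclude by the genus count: Proposition~\ref{genus-Pinf} gives $g(T(q,m))\le g$, and $H(P_\infty^i)$ has genus exactly $g$, so an inclusion $T(q,m)\subseteq H(P_\infty^i)$ forces equality.

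Note that $T(q,m)=\langle q,\dots,q+1-\tfrac m2\rangle+\langle\tfrac{q+1}2\rangle$ coincides with the semigroup in the statement, since $q+1=2\cdot\frac{q+1}{2}$. The elements of $A$ in Proposition~\ref{automorfismos} permute $\mathcal O_\infty$ transitively, so it suffices to work at a single place $P_\infty^i$. By Lemma~\ref{Pinf-m-even} I use the local parameter $\pi=1/y$ and the expansion $x=\alpha_i\pi^{-m/2}+\beta_i\pi^{(q+1-m)/2}+\cdots$, where $\beta_i\neq 0$ by the relation defining $\beta_i$.

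The key function is $\phi_i:=x-\alpha_i y^{m/2}$.
\begin{itemize}
\item At $P_\infty^i$ the leading terms cancel, so $v_{P_\infty^i}(\phi_i)=\frac{q+1-m}{2}$.
\item At $P_\infty^j$ with $j\ne i$ we have $\alpha_j\ne\alpha_i$, so $\phi_i$ has a pole of exact order $m/2$.
\item By Proposition~\ref{divi-xy2}, $\phi_i$ is regular at all finite places.
\end{itemize}
Hence $(\phi_i)_\infty=\frac m2(D_\infty-P_\infty^i)$. Its remaining zero divisor $Z_i$ is effective, supported at finite places, and has degree $\frac m2\bigl(\frac{2(q+1)}m-1\bigr)-\frac{q+1-m}2=\frac{q+1}{2}$.

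I would then look for functions of the form $h/\phi_i^{\,k}$, where $h\in\mathbb F_{q^2}[x,y]$ vanishes on $Z_i$ and has controlled poles on $D_\infty$. Such functions then have a pole only at $P_\infty^i$.
\begin{itemize}
\item For the generator $\frac{q+1}2$, I would use the factorization coming from the curve equation. Set $u=x^{(q+1)/m}$ and $w=y^{(q+1)/2}$, so that $w^2+u^2+u=0$. With $c^2=-1$ this gives $(w-cu)(w+cu)=-u$. Since $u$ takes leading coefficient $\alpha_j^{(q+1)/m}\in\{\pm c\}$ at the various $P_\infty^j$, each factor $w\mp cu$ loses its pole on half of $\mathcal O_\infty$. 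I expect a suitable combination of $w\mp cu$, $\phi_i$ and powers of $y$ to yield pole divisor exactly $\frac{q+1}2P_\infty^i$.
\item For the generators $q+1-\frac m2+j$, $j=0,\dots,\frac m2-1$, I would try $y^{j}\cdot(\text{numerator})/\phi_i^{\,k}$. The idea is that each extra factor of $y$ adds a pole of order $1$ at every $P_\infty^\ell$, and these poles are absorbed at $\ell\ne i$ by the zeros of the numerator there. The exact pole orders are then checked from the $\pi$-expansion at $P_\infty^i$ and from Proposition~\ref{divi-xy2} elsewhere.
\end{itemize}

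The main obstacle is identifying $Z_i$ explicitly, together with a polynomial numerator cancelling it, without introducing new poles off $P_\infty^i$. My first attempt would be to compute $Z_i$ by eliminating between $x=\alpha_i y^{m/2}$ and the curve equation. Since $\alpha_i^{2(q+1)/m}=-1$, this gives $y^{q+1}\bigl(1+\alpha_i^{2(q+1)/m}\bigr)=-\alpha_i^{(q+1)/m}y^{(q+1)/2}$, so the leading terms cancel and the finite zeros lie over $y=0$ or $x=0$. This suggests that $Z_i$ is supported on $\mathcal O_0\cup\mathcal O_m$, where the divisors of $x$, $y$ and $x-a$ (with $a^{(q+1)/m}=-1$) are known. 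If so, I can cancel $Z_i$ using monomials in $x$, $y$ and the functions $x-a$, and read off the pole orders directly.
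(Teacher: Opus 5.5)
\textbf{Gap.} Your setup is right, and $\phi_i=x-\alpha_i y^{m/2}$ is exactly the function that makes this work. However, the proposal stops before the decisive step. You never exhibit a function with pole divisor $\frac{q+1}{2}P_\infty^i$ or $(q+1-\frac m2+k)P_\infty^i$ (``I expect a suitable combination\dots''), so $T(q,m)\subseteq H(P_\infty^i)$ is not yet proved.

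\textbf{How to close it.} What is missing is the exact divisor $Z_i$, and your elimination gives it if you push it one step further.
\begin{itemize}
\item At a finite zero, $\alpha_i^{(q+1)/m}y^{(q+1)/2}=0$ forces $y=0$, and then $x=\alpha_i y^{m/2}=0$. So $Z_i$ is supported on $\mathcal O_0$ alone; on $\mathcal O_m$ one has $\phi_i=a\neq 0$.
\item Since $v_{P_0^\ell}(x)=m>\frac m2=v_{P_0^\ell}(y^{m/2})$, you get $Z_i=\frac m2 D_0$, which matches your degree count.
\end{itemize}
Hence
\[
(\phi_i)=\tfrac m2 D_0+\tfrac{q+1-m}{2}P_\infty^i-\tfrac m2\sum_{\ell\neq i}P_\infty^\ell .
\]
With this, neither $w\mp cu$ nor the functions $x-a$ are needed. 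By Proposition~\ref{divi-xy2},
\[
\Bigl(\frac{x}{\phi_i}\Bigr)=\tfrac m2 D_0-\tfrac{q+1}{2}P_\infty^i,
\qquad
\Bigl(\frac{y^k x}{\phi_i^2}\Bigr)=kD_0+k\sum_{P\in\mathcal O_m}P+\Bigl(\tfrac m2-k\Bigr)\sum_{\ell\neq i}P_\infty^\ell-\Bigl(q+1-\tfrac m2+k\Bigr)P_\infty^i .
\]
For $0\le k\le \frac m2$ these put $\frac{q+1}{2}$ and $q+1-\frac m2,\dots,q+1$ in $H(P_\infty^i)$. Your genus argument with Proposition~\ref{genus-Pinf} then concludes.

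\textbf{Comparison with the paper.} Completed this way, your argument is the paper's proof seen from the other side. The $\alpha_\ell$ are all the roots of $T^{2(q+1)/m}+1$, so the paper's polynomial is $F(T)=b\,(T^{2(q+1)/m}+1)/(T-\alpha_1)$, with $b$ its leading coefficient. Hence $f\,\phi_1=b\,(x^{2(q+1)/m}+y^{q+1})=-b\,x^{(q+1)/m}$. So the paper's $f$ is $-b\,x^{(q+1)/m}/\phi_1$, its $g_1$ is $-b\,x/\phi_1$, and $y^kf/(g_2\cdots g_{2(q+1)/m})$ has the same divisor as $y^kx/\phi_1^2$.

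Working with $\phi_i$ directly avoids the interpolation polynomial, the second-order term $G(\alpha_i,\beta_i)$, and the auxiliary functions $g_\ell$. In fact, once $Z_i=\frac m2D_0$ is known, $\deg(\phi_i)=0$ alone forces $v_{P_\infty^i}(\phi_i)=\frac{q+1-m}{2}$. So you do not even need $\beta_i\neq 0$ from Lemma~\ref{Pinf-m-even}, only $\alpha_\ell\neq\alpha_i$ for $\ell\neq i$.
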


\begin{proof}
    We already noted in Proposition~\ref{automorfismos} that all these places
    have the same Weierstrass semigroup. Therefore, we show the result for
    $i = 1$. Let
    $$F(T) = b_0 + b_1 T + \cdots + b_{2(q+1)/m-1} T^{2(q+1)/m-1}$$
    be the polynomial of degree $\frac{2(q+1)}{m}-1$ whose roots are
    $\alpha_2, \ldots, \alpha_{2(q+1)/m}$. Define
    $$f = \sum_{i=0}^{2(q+1)/m-1} b_i\, x^i\, y^{q+1-(i+1)m/2}.$$
    By expanding in terms of $\pi$, we conclude that
    $$f = F(\alpha_i)\pi^{-(q+1-m/2)} + G(\alpha_i,\beta_i)\pi^{-(q+1-m)/2}
        + \textrm{h.o.t.}$$
    Therefore, $v_{P_{\infty}^1}(f) = -(q+1-\frac{m}{2})$, and
    $v_{P_{\infty}^i}(f) \geq -\frac{q+1-m}{2}$ for
    $i = 2, \ldots, \frac{2(q+1)}{m}$.

    By the strict triangle inequality for valuations,
    $v_{P_0^i}(f) = q+1-\frac{m}{2}$ and $f$ has no poles outside
    $\mathcal{O}_{\infty}$. Comparing degrees, we conclude that
    $$(f) = \left(q+1-\tfrac{m}{2}\right)\sum_{i=1}^{(q+1)/m} P_0^i
           - \left(q+1-\tfrac{m}{2}\right) P_{\infty}^1
           - \tfrac{q+1-m}{2} \sum_{i=2}^{2(q+1)/m} P_{\infty}^i.$$
    Setting $g_1 \coloneqq \frac{f}{x^{(q+1)/m-1}}$, we have
    $$(g_1) = \frac{m}{2}\sum_{i=1}^{(q+1)/m} P_0^i
            - \frac{q+1}{2} P_{\infty}^1.$$
    Analogously, we define a function $g_i$ whose divisor is
    $(g_i) = \frac{m}{2}\sum_{j=1}^{(q+1)/m} P_0^j
            - \frac{q+1}{2} P_{\infty}^i$.
    We then obtain
    $$\left(\frac{y^j \cdot f}{g_2 \cdots g_{2(q+1)/m}}\right)_{\infty}
      = \left(q+1-\tfrac{m}{2}+j\right) P_{\infty}^1.$$
    Finally, since $P_{\infty}^1$ is a rational place, we have
    $q+1 \in H(P_{\infty}^1)$, and the Riemann--Roch theorem gives
    $(q+1)P \sim (q+1)P_{\infty}^1$ for any rational place $P$. Therefore,
    $$\left\langle \frac{q+1}{2},\, q+1 - \frac{m}{2},\, \ldots,\, q+1
      \right\rangle \subseteq H(P_{\infty}^1).$$
    The equality of both numerical semigroups follows from
    Proposition~\ref{genus-Pinf}.
\end{proof}

\begin{proposition}\label{prop:num-inf-odd}
    Let $m$ be odd. We have
    $$H(P_{\infty}^i) =
      \left\langle q+1-m,\, q+1-m+2,\, \ldots,\, q,\, q+1 \right\rangle.$$
\end{proposition}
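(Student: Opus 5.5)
The plan is to follow the template of Proposition~\ref{prop:num-inf-even}. First note that for $m$ odd the claimed semigroup is exactly $T(q,m)=\langle q+1\rangle+\langle q,q-2,\ldots,q+1-m\rangle$. By Proposition~\ref{genus-Pinf} its genus is at most $g$. Since a Weierstrass semigroup has exactly $g$ gaps, it suffices to show $T(q,m)\subseteq H(P_\infty^i)$. By Proposition~\ref{automorfismos} the group $A$ acts transitively on $\mathcal{O}_\infty$, so it is enough to treat $P_\infty^1$. Moreover $q+1\in H(P_\infty^1)$, because $(q+1)P\sim(q+1)P_\infty^1$ for any rational place $P$, exactly as in the even case. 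So the task is to exhibit, for each $j=0,\ldots,\frac{m-1}{2}$, a function whose pole divisor is $(q+1-m+2j)P_\infty^1$.

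\textbf{Local behaviour at infinity.} By Proposition~\ref{divi-xy1}, $v_{P_\infty^i}(x)=-m$ and $v_{P_\infty^i}(y)=-2$. Consider the degree-zero function $w=y^m/x^2$. From the curve equation,
\[
w^{(q+1)/m}=\frac{y^{q+1}}{x^{2(q+1)/m}}=-1-x^{-(q+1)/m}.
\]
Hence $w$ takes at $P_\infty^i$ a value $c_i$ with $c_i^{(q+1)/m}=-1$. The values $c_i$ should be pairwise distinct: $w$ and $x$ generate the function field, so the argument given after Lemma~\ref{Pinf-m-even} for the $\alpha_i$ applies. Since $p\nmid\frac{q+1}{m}$ and $v_{P_\infty^i}(x^{-(q+1)/m})=q+1$, we get $v_{P_\infty^i}(w-c_i)=q+1$.

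\textbf{Construction of the functions.} Let $F(T)=\sum_k b_kT^k$ be the polynomial of degree $N-1$, where $N=\frac{q+1}{m}$, whose roots are $c_2,\ldots,c_N$. Put
\[
f=x^{2(N-1)}F(w)=\sum_k b_k\,y^{mk}x^{2(N-1-k)},
\]
a polynomial in $x$ and $y$, so $f$ has no finite poles. Its valuations at infinity are as follows.
\begin{itemize}
\item At $P_\infty^1$ we have $F(c_1)\neq0$, so $v(f)=-2m(N-1)=-2(q+1-m)$.
\item At $P_\infty^i$ with $i\ge2$, $F(w)$ vanishes to order $q+1$, so $v(f)=-2(q+1-m)+(q+1)$.
\end{itemize}
Its zeros at $\mathcal{O}_0\cup\mathcal{O}_m$ can be read off from the leading monomials (for instance $v_{P_0^i}(y)=1$ and $v_{P_0^i}(x)=m$), using the strict triangle inequality. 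I would then divide $f$ by a suitable product $x^ay^b$ so that the zeros at $D_0$ and at $\mathcal{O}_m$ are exactly absorbed and the result, call it $h$, has pole divisor $(q+1-m)P_\infty^1$. The degree count should force this.

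\textbf{Remaining elements and the main difficulty.} By the same construction centred at each $P_\infty^i$, I obtain functions $g_i$ whose only pole is at $P_\infty^i$ and whose zeros lie in $\mathcal{O}_0\cup\mathcal{O}_m$. Then $y^{2j}$-type multiples, namely $y^j h$ divided by a suitable product of the $g_i$ for $i\ge2$, should kill the extra poles of order $2j$ at the other $P_\infty^i$. This mimics the step $\frac{y^jf}{g_2\cdots g_{2(q+1)/m}}$ in the even case and yields pole divisor $(q+1-m+2j)P_\infty^1$. The main obstacle is the valuation bookkeeping: choosing the exponents in $h$ and in the quotients so that every zero of $y^j$ on $D_0+\mathcal{O}_m$ is balanced and no new poles appear at finite places. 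If the naive quotient fails, I would first try replacing $y^j$ by $y^j/x^{\lfloor\cdot\rfloor}$ or using $w$-shifted variants, checking degrees against $\deg D_\infty=N$.
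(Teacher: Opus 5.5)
Your construction is essentially the paper's. The paper builds a polynomial whose roots are the values $\beta_i^{-m}$ of $x^2/y^m=1/w$ at $P_\infty^2,\dots,P_\infty^N$, divides by $x^{(q+1)/m-1}$, and then multiplies by powers of $y$; your $f=x^{2(N-1)}F(w)$ has the same divisor. However, there is a genuine gap at exactly the step you flag as the main obstacle, and the mechanism you suggest there would fail.

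\textbf{The divisor of $f$.} Since $F(0)=\prod_{i\ge2}(-c_i)\neq0$, $f$ does not vanish on $\mathcal{O}_m$. The monomial $y^{m(N-1)}$ is the term of least valuation at $P_0^k$, so $v_{P_0^k}(f)=m(N-1)=q+1-m$. Comparing degrees,
\[
(f)=(q+1-m)D_0-2(q+1-m)P_\infty^1-(q+1-2m)\sum_{i\ge2}P_\infty^i .
\]
So the division must be by $x^{N-1}$ alone: any power of $y$ in the denominator creates poles on $\mathcal{O}_m$. Then $h=f/x^{N-1}$ has divisor $m\sum_{i\ge2}P_\infty^i-(q+1-m)P_\infty^1$.

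\textbf{The missed point.} $h$ vanishes to order $m$ at each $P_\infty^i$, $i\ge2$, so $y^jh$ has no extra poles to kill. For $0\le j\le\frac{m-1}{2}$ you get $v_{P_\infty^i}(y^jh)=m-2j>0$. Moreover $y^jh$ is regular at all finite places, so its pole divisor is exactly $(q+1-m+2j)P_\infty^1$. This is what the paper means by ``multiplying by suitable powers of $y$''.

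\textbf{Why your proposed quotient fails.} Dividing by $g_2\cdots g_N$ is not just unnecessary but wrong. Take the natural choice $g_i=f_i/x^{N-2}$, where $f_i$ is your construction centred at $P_\infty^i$, so that $(g_i)=mD_0-(q+1)P_\infty^i$. Then $y^jh/(g_2\cdots g_N)$ has a pole of order $(q+1-m)-j>0$ at every $P_0^k$, because $y^jh$ vanishes there only to order $j$. The analogy with the even case breaks down. There, $f$ still has zeros on $D_0$ and poles at the other $P_\infty^i$, and the $g_i$ absorb both. Here, dividing by $x^{N-1}$ already removes the zeros on $D_0$ and turns the poles at $P_\infty^i$, $i\ge2$, into zeros of order $m$.

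\textbf{Distinctness of the $c_i$.} Your justification is false: $x$ and $w$ do not generate the function field. Since $w^N=-1-x^{-N}$, we have $[\mathbb{F}_{q^2}(x,w):\mathbb{F}_{q^2}(x)]=N$, while the function field has degree $mN=q+1$ over $\mathbb{F}_{q^2}(x)$. You need distinctness both for $F(c_1)\neq0$ and for $v_{P_\infty^i}(w-c_i)=q+1$, so argue it as follows. Because $p\nmid N$, the infinite place of $\mathbb{F}_{q^2}(x)$ splits into $N$ places of $\mathbb{F}_{q^2}(x,w)$, on which $w$ takes the $N$ distinct roots of $c^N=-1$. Each of these lies below at least one of the $N$ places $P_\infty^i$, hence below exactly one, so the $c_i$ are pairwise distinct.

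With these corrections, the rest of your outline completes the proof: $q+1\in H(P_\infty^1)$, transitivity of $A$ on $\mathcal{O}_\infty$, and the genus bound of Proposition~\ref{genus-Pinf}.
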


\begin{proof}
    The first part of this proof is very similar to that of
    Lemma~\ref{Pinf-m-even}. However, we need to adjust the local parameter at
    $P_{\infty}^1$: since $v_{P_{\infty}^1}(x) = -m$ and $p \nmid m$, we may
    apply Hensel's lemma in the completed local ring at $P_{\infty}^1$ to write
    $x = \pi^{-m}$ for some local parameter $\pi$. We then obtain
    $$y = \beta_1\pi^{-2} + \gamma_1\pi^{q+1} + \textrm{h.o.t.},$$
    with $\beta_1^{q+1} = -1$.

    The function $\pi$ is a local parameter at each $P_{\infty}^i$, and the
    coefficients $\beta_i$ are related by multiplication by a primitive
    $(q+1)$-th root of unity (as can also be seen from the covering map
    $\mathcal{H} \rightarrow \mathcal{X}$). In particular, the elements
    $\beta_i^m$ are pairwise distinct. We define the polynomial $F$ whose roots
    are $\bigl(\beta_i^{-m}\bigr)_i$.

    Proceeding as in the proof of Lemma~\ref{Pinf-m-even}, we obtain a function
    $f$ such that
    $$\left(\frac{f}{x^{(q+1)/m-1}}\right)
      = m \sum_{i=2}^{(q+1)/m} P_{\infty}^i - (q+1-m)P_{\infty}^1.$$
    The remaining generators of the semigroup are obtained by multiplying by
    suitable powers of $y$, and the equality of both numerical semigroups
    follows from Proposition~\ref{genus-Pinf}.
\end{proof}

\begin{remark}
    \rm{Based on Propositions~\ref{prop:num-zeros}, \ref{prop:num-inf-even},
    and~\ref{prop:num-inf-odd}, the places of $\mathcal{O}$ share the same
    Weierstrass semigroup if and only if $m = (q+1)/2$ is even.}
\end{remark}

This is a key difference between our work and the case $m = 3$ studied
in~\cite{BMV}. In that paper, the authors showed that $\mathcal{O}$ is always
an orbit under the action of $\textrm{Aut}(\mathcal{X}_{3,q})$, but in our
case $\mathcal{O}$ can only be an orbit if $m = (q+1)/2$ and $m$ is even (which
is equivalent to $q \equiv 3 \pmod{4}$). In fact, in this case the curve
$\mathcal{X}_{(q+1)/2,q}$ admits more automorphisms than those found in
Proposition~\ref{automorfismos}.

\begin{proposition}\label{prop:bigger-aut}
    Let $q \ge 7$ with $q \equiv 3 \pmod{4}$, and let
    $\delta_1, \delta_2, \delta_3 \in \mathbb{F}_{q^2}^*$ be 
such that
$$
(\delta_3)^{q+1} = \tfrac{1}{16},\quad
(\delta_2)^4 = -\tfrac{1}{16},\quad
\delta_1 = -4^{-1} (\delta_2)^{-1}
$$
Then there is an automorphism $\theta_4 \in \Aut(\mathcal{X}_{(q+1)/2,q})$ given by
    \begin{equation}\label{theta4}
    \theta_4^{\delta_1,\delta_2,\delta_3}(x,y) =
        \theta_4(x, y) \coloneqq
        \left(\frac{\delta_1 x}{y^{(q+1)/4}}+\frac{\delta_2 y^{(q+1)/4}}{x},\,
              \frac{\delta_3}{y}\right).
    \end{equation}
Moreover, if $(\delta_3)^{(q+1)/2} = \frac{1}{4}$ and if $G \subseteq \Aut(\mathcal{X}_{(q+1)/2,q})$ is the subgroup generated by $A \cup \{\theta_4\}$, where
    $$A \coloneqq \{\theta_{\gamma,\delta}(x,y) = (\gamma x, \delta y)
            \mid \gamma^2 = \delta^{q+1} = 1\}$$
then $G$ has order $8(q+1)$.
\end{proposition}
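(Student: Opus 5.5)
The plan is to exploit the special shape of the curve when $m=\frac{q+1}{2}$. Put $k\coloneqq\frac{q+1}{4}$, which is an integer because $q\equiv 3\pmod 4$. Then $\mathcal{X}_{(q+1)/2,q}$ is
\[
y^{4k}+x^4+x^2=0.
\]

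\textbf{Step 1: $\theta_4$ preserves the equation.} Introduce $u\coloneqq x/y^{k}$ and $w\coloneqq y^{-2k}$. Dividing the equation by $y^{4k}$ gives $u^4+wu^2+1=0$, that is, $w=-(u^2+u^{-2})$. In these coordinates $\theta_4$ reads $X=\delta_1u+\delta_2u^{-1}$ and $Y=\delta_3/y$. Since $\delta_3^{q+1}=\frac1{16}$, we get $Y^{q+1}=w^2/16$. The hypotheses give $\delta_1\delta_2=-\frac14$ and $\delta_1^2=\frac{1}{16\delta_2^2}=-\delta_2^2$, the latter using $\delta_2^4=-\frac1{16}$. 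Hence
\[
X^2=\delta_2^2(u^{-2}-u^2)-\tfrac12 .
\]
It follows that
\[
X^4+X^2=\bigl(X^2+\tfrac12\bigr)^2-\tfrac14=-\tfrac1{16}(u^{-2}-u^2)^2-\tfrac14=-\tfrac1{16}(u^2+u^{-2})^2=-\tfrac{w^2}{16}.
\]
So $Y^{q+1}+X^4+X^2=0$, and $\theta_4$ induces an $\mathbb{F}_{q^2}$-embedding $\sigma$ of the function field $F=\mathbb{F}_{q^2}(x,y)$ into itself.

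\textbf{Step 2: $\sigma$ is an automorphism.} Since $\mathbb{F}_{q^2}(Y)=\mathbb{F}_{q^2}(y)$ and $[F:\mathbb{F}_{q^2}(y)]=4$, the degree $[F:\sigma(F)]$ divides $4$. As $p$ is odd, the extension $F/\sigma(F)$ is separable. Because $\sigma(F)\cong F$, both fields have the same genus $g\ge 2$ (here $q\ge 7$). The Riemann--Hurwitz formula then gives $2g-2\ge [F:\sigma(F)](2g-2)$, which forces degree $1$. Therefore $\theta_4\in\Aut(\mathcal{X}_{(q+1)/2,q})$.

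\textbf{Step 3: the order of $G$.} Now assume in addition $\delta_3^{(q+1)/2}=\frac14$. The subgroup $A$ has order $2(q+1)$, so it suffices to show that $A$ is normal in $G$ and that $G/A$ has order $4$.

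First I would check that $\theta_4$ normalizes $A$ by computing $\theta_4\circ\theta_{\gamma,\delta}$ directly. We have $y\mapsto \delta y$, and, because $\gamma=\pm1$, also $u\mapsto \gamma\delta^{-k}u$. The unit $\epsilon=\gamma\delta^{-k}$ satisfies $\epsilon^{4}=1$, which is the only freedom compatible with the relation $u^4+wu^2+1=0$. One then checks that $\theta_4\theta_{\gamma,\delta}=\theta_{\gamma',\delta'}\theta_4^{j}$ for suitable $\gamma',\delta',j$; alternatively, this product equals $\theta_4^{\delta_1',\delta_2',\delta_3'}$ with altered constants, and these altered maps are themselves expressed through powers of $\theta_4$ and elements of $A$.

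Next I would compute $\theta_4^2$ and $\theta_4^4$ explicitly. Using $Y^{k}=\delta_3^{k}y^{-k}$ we get $u'=X/Y^{k}=\delta_3^{-k}y^{k}(\delta_1u+\delta_2u^{-1})$. Iterating, the condition $\delta_3^{2k}=\frac14$ should make the second component of $\theta_4^2$ equal to $y$ and its first component equal to $cx$ up to an element of $A$. I then expect to show $\theta_4^2\notin A$ but $\theta_4^4\in A$, or else that $\theta_4^2\in A$ together with one further product such as $\theta_4\theta_{\gamma,\delta}\theta_4$ giving a new coset, so that $[G:A]=4$ in either case.

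For the lower bound $[G:A]\ge 4$, cosets are separated by their action on places. Elements of $A$ fix $\mathcal{O}_0$, $\mathcal{O}_\infty$ and $\mathcal{O}_m$ setwise. The map $\theta_4$ sends $y\mapsto\delta_3/y$, so it interchanges the zeros and poles of $y$, namely $\mathcal{O}_0\cup\mathcal{O}_m$ and $\mathcal{O}_\infty$. Distinguishing the remaining cosets requires following where $\theta_4^2$ sends the places of $\mathcal{O}_0$ versus $\mathcal{O}_m$.

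The main obstacle is this group-theoretic bookkeeping in Step 3: the explicit form of $\theta_4^2$, the commutation relations with $A$, and in particular extracting the exact order $8(q+1)$ rather than merely a multiple or divisor of it. This is where the hypothesis $\delta_3^{(q+1)/2}=\frac14$ has to be used.
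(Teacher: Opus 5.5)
Your Steps 1 and 2 are sound. The $u=x/y^{k}$, $w=y^{-2k}$ check is tidier than the paper's direct expansion, and the Riemann--Hurwitz argument supplies the invertibility that the paper only asserts.

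The gap is in Step 3, whose organizing reduction is false: $A$ is \emph{not} normal in $G$, so there is no quotient $G/A$ of order $4$ to exhibit. Take $\delta$ a primitive $(q+1)$-th root of unity, so that $\delta^{(q+1)/4}=\iota$ with $\iota^2=-1$. Then
\[
\theta_4\circ\theta_{1,\delta}(x,y)=\Bigl(\iota\Bigl(-\frac{\delta_1x}{y^{k}}+\frac{\delta_2y^{k}}{x}\Bigr),\ \frac{\delta^{-1}\delta_3}{y}\Bigr).
\]
On the curve, $x/y^{k}$ and $y^{k}/x$ are linearly independent over the constants. So this first coordinate is not a constant multiple of $\delta_1x/y^{k}+\delta_2y^{k}/x$. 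Hence $\theta_4\theta_{1,\delta}\notin A\theta_4$, i.e.\ $\theta_4\theta_{1,\delta}\theta_4^{-1}\notin A$.

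In your coordinates, $\theta_{\gamma,\delta}$ acts by $u\mapsto\epsilon u$, and for $\epsilon=\pm\iota$ this turns $\delta_1u+\delta_2u^{-1}$ into $\epsilon(\delta_1u-\delta_2u^{-1})$. Geometrically, $\theta_{1,\delta}$ has order $q+1$ and fixes the places of $\mathcal{O}_m$. Since $\theta_4$ sends zeros of $y$ to poles of $y$, the conjugate has order $q+1$ and fixes places of $\mathcal{O}_\infty$. But $A$ is transitive on the four places of $\mathcal{O}_\infty$, so the stabilizer in $A$ of such a place has order $(q+1)/2$ and cannot contain an element of order $q+1$. Your relation $\theta_4\theta_{\gamma,\delta}=\theta_{\gamma',\delta'}\theta_4^{j}$ does hold, but with $j=3$ whenever $\delta^{(q+1)/4}=\pm\iota$, so it does not give normality.

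The decisive computations are also left as expectations. Two things are needed, and the paper does both.

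First, compute $\theta_4^2$. Using $\delta_1\delta_2=-\tfrac14$, $\delta_1^2=-\delta_2^2$, the curve equation and $\delta_3^{(q+1)/2}=\tfrac14$, one gets
\[
\theta_4^2(x,y)=\Bigl(-2\delta_3^{(q+1)/4}\,\frac{y^{(q+1)/2}}{x},\ y\Bigr)=\theta_{-2\delta_3^{(q+1)/4},1}\circ\theta_2,
\]
with $-2\delta_3^{(q+1)/4}=\pm1$. Hence $\theta_4^4=\mathrm{id}$, $\theta_4^2\notin A$, and $\langle\theta_4\rangle\cap A=\{\mathrm{id}\}$. This is exactly where the hypothesis on $\delta_3$ enters: if $\delta_3^{(q+1)/2}=-\tfrac14$, one gets $\theta_4^2\in A$ instead.

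Second, show $\theta_4A\subseteq A\theta_4\cup A\theta_4^{3}$. The paper notes that $\theta_{\gamma,\delta}\theta_4=\theta_4^{\gamma\delta_1,\gamma\delta_2,\delta\delta_3}$ is again of the same type. By the square computation in both sign cases, its square lies in $A\cup A\theta_4^2$. Then $\theta_4\theta_{\gamma,\delta}=\theta_{\gamma,\delta}^{-1}(\theta_{\gamma,\delta}\theta_4)^2\theta_4^{-1}$ gives the inclusion.

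Consequently the finite set $A\langle\theta_4\rangle$ is closed under composition, so it is a subgroup, hence equals $G$, and $|G|=|A|\cdot|\langle\theta_4\rangle|=8(q+1)$. Your orbit argument with $\mathcal{O}_0,\mathcal{O}_m,\mathcal{O}_\infty$ is fine for separating the four cosets $A\theta_4^{j}$. The upper bound $|G|\le 8(q+1)$, however, must come from this product-set closure, not from a quotient by $A$.
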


\begin{proof}
    We first verify that $\theta_4$ is an automorphism of the curve. Note that
    $$\delta_1^4 = \delta_2^4 = -\tfrac{1}{16}, \quad
      \delta_3^{q+1} = \tfrac{1}{16}, \quad
      6\delta_1^2\delta_2^2 + 2\delta_1\delta_2 = -\tfrac{2}{16}, \quad
      4\delta_1\delta_2 + 1 = 0.$$
    Therefore,
    \begin{align*}
        &\left(\frac{\delta_3}{y}\right)^{q+1}
         + \left(\frac{\delta_1x}{y^{(q+1)/4}}
                 + \frac{\delta_2y^{(q+1)/4}}{x}\right)^4
         + \left(\frac{\delta_1x}{y^{(q+1)/4}}
                 + \frac{\delta_2y^{(q+1)/4}}{x}\right)^2 \\
        &= \frac{\delta_3^{q+1}x^4 + \delta_1^4x^8
                 + (4\delta_1^3\delta_2 + \delta_1^2)x^6y^{\frac{q+1}{2}}
                 + (6\delta_1^2\delta_2^2 + 2\delta_1\delta_2)x^4y^{q+1}
                 + (4\delta_1\delta_2^3 + \delta_2^2)x^2y^{\frac{3(q+1)}{2}}
                 + \delta_2^4y^{2(q+1)}}
               {x^4y^{q+1}} \\
        &= \frac{y^{2(q+1)}+2x^4y^{q+1}+x^8-x^4}{16x^4y^{q+1}}
         = \frac{y^{2(q+1)}+2x^4y^{q+1}+2x^2y^{q+1}+x^8+2x^6+x^4}
               {16x^4y^{q+1}} \\
        &= \frac{(y^{q+1}+x^4+x^2)^2}{16x^4y^{q+1}},
    \end{align*}
    which shows that $\theta_4 \in \Aut(\mathcal{X}_{(q+1)/2,q})$.

    Since $2^{-4} = \delta_3^{q+1} = (\delta_3^{(q+1)/4})^4$, the value $2\delta_3^{(q+1)/4}$ is a fourth root of unity. We may verify that:
    \begin{itemize}
    \item
    When $2\delta_3^{(q+1)/4} \in \{1,-1\}$, then $\theta_4\circ \theta_4 = \theta_{-2\delta_3^{(q+1)/4}, 1} \circ \theta_2 \notin A$ (in the notation of Proposition~\ref{automorfismos}).
    \item
    When $2\delta_3^{(q+1)/4} \notin \{1,-1\}$, then $8\delta_3^{(q+1)/4}\delta_2^2 \in \{1,-1\}$ and $\theta_4\circ \theta_4 = \theta_{8\delta^{(q+1)/4}\delta_2^2, 1} \in A$.
    \end{itemize}
In particular, $(\theta_4)^4 = \mathrm{id}$.

    The same formulas for $\theta_4 \circ \theta_4$ can be applied to
    $$
    \theta_{\gamma,\delta} \circ \theta_4^{\delta_1,\delta_2,\delta_3} = \theta_4^{\gamma\delta_1,\gamma\delta_2,\delta\delta_3}
    $$
    in place of $\theta_4 = \theta_4^{\delta_1,\delta_2,\delta_3}$.

So, we have, in the case $\delta_3^{(q+1)/2}=\frac{1}{4}$,
\begin{align*}
\theta_{4}\circ\theta_{\gamma,\delta} & =\theta_{\gamma,\delta}^{-1}\circ(\theta_{\gamma,\delta}\circ\theta_{4})^{2}\circ(\theta_{4})^{-1}\\
 & \in\theta_{\gamma,\delta}^{-1}\circ A\circ\{(\theta_4)^2,\mathrm{id}\}\circ(\theta_{4})^{-1}\\
 & =A\circ\{\theta_{4},(\theta_{4})^{3}\},
\end{align*}
and so $\left\langle \theta_{4}\right\rangle \circ A\subseteq A \circ \left\langle \theta_{4}\right\rangle $;
so $A\circ \left\langle \theta_{4}\right\rangle $ is a subgroup, and has
exactly $8(q+1)$ elements, because $A\cap\left\langle \theta_{4}\right\rangle =\{\mathrm{id}\}$.

\end{proof}

One can verify that $\theta_4(\mathcal{O}_m) \subseteq \mathcal{O}_{\infty}$,
so in the case where $m = (q+1)/2$ is even, $\mathcal{O}$ can indeed be an
orbit under the action of $\textrm{Aut}(\mathcal{X}_{(q+1)/2,q})$. To confirm
this, it remains to show that the Weierstrass semigroups at the rational places
outside $\mathcal{O}$ differ from those of $\mathcal{O}$. This will be done in
the following section.

\section{\texorpdfstring
{The Weierstrass semigroups at the places outside of $\mathcal{O}$}
{The Weierstrass semigroups at the places outside of O}}
\label{sec:weierstrass-other}

Although $\mathcal{O}$ is not always an orbit under the action of the
automorphism group of $\mathcal{X}_{m,q}$, the Weierstrass semigroups at the
places outside of $\mathcal{O}$ differ from those of $\mathcal{O}$. This is
analogous to the result obtained in \cite{BMV} for the case $m = 3$. Following
their approach, we extend the argument to the curves $\mathcal{X}_{m,q}$ in
order to obtain the Weierstrass semigroups at the remaining rational places.
As will be seen in the following subsections, this depends heavily on the
values of $m$ and on the characteristic of $\mathbb{F}_q$. We first present
the case $m = 4$ in detail, and then discuss how to extend this to the general
case.

\subsection{\texorpdfstring{The case $m = 4$}{The case m = 4}}

Let $P_{(a,b)}$, $ab \neq 0$, be a rational place not in $\mathcal{O}$. Let
$Q_{(A,B)}$, with $A^{m} = a$ and $AB = b$, be a place of the Hermitian
function field lying over $P_{(a,b)}$. Then, as observed in
Section~\ref{sec:results}, $e\!\left(Q_{(A,B)} \mid P_{(a,b)}\right) = 1$.
Let $T \coloneqq (u-A)/A$ be a local parameter at $Q_{(A,B)}$. Then
$$\frac{x-a}{a} = \frac{u^{m}-A^{m}}{A^{m}}
  = (T+1)^{m} - 1
  = \binom{m}{1}T + \binom{m}{2}T^{2} + \cdots + \binom{m}{m}T^{m},$$
\begin{align*}
    \frac{y-b}{b}
    &= \frac{uv - AB}{AB}
     = \frac{(u-A+A)(v-B+B) - AB}{AB} \\
    &= \frac{(u-A)(v-B) + A(v-B) + B(u-A)}{AB}
     = T\frac{v-B}{B} + \frac{v-B}{B} + T.
\end{align*}
From the equation of the curve,
\begin{align*}
    0 &= u^{q+1} + v^{q+1} + 1 \\
      &= (u-A+A)^{q+1} + (v-B+B)^{q+1} + 1 \\
      &= (u-A)^{q+1} + A^{q}(u-A) + A(u-A)^{q} + A^{q+1} \\
      &\quad + (v-B)^{q+1} + B^{q}(v-B) + B(v-B)^{q} + B^{q+1} + 1,
\end{align*}
which gives
\begin{align*}
    0 &= (u-A)^{q+1} + (v-B)^{q+1} + A^{q}(u-A) + B^{q}(v-B)
         + A(u-A)^{q} + B(v-B)^{q} \\
      &= B^{q}(v-B) + A^{q+1}T + O\!\left(T^{q}\right),
\end{align*}
so
\begin{align*}
    \frac{y-b}{b}
    &= (T+1)\frac{v-B}{B} + T \\
    &= (T+1)\left(-\left(\frac{A}{B}\right)^{q+1}T + O\!\left(T^{q}\right)\right) + T \\
    &= \left(1 - \left(\frac{A}{B}\right)^{q+1}\right)T
       - \left(\frac{A}{B}\right)^{q+1}T^{2} + O\!\left(T^{q}\right) \\
    &= \left(\frac{2a^{(q+1)/m}+1}{1+a^{(q+1)/m}}\right)T
       + \frac{a^{(q+1)/m}}{1+a^{(q+1)/m}}\,T^{2} + O\!\left(T^{q}\right),
\end{align*}
where $O(T^{q})$ denotes an element with $Q_{(A,B)}$-valuation at least $q$.

Defining
\[
    \alpha \coloneqq \frac{a^{(q+1)/m}}{1+a^{(q+1)/m}}
    = \frac{A^{q+1}}{1+A^{q+1}} = \frac{A^{q+1}}{-B^{q+1}},
\]
we have
\[
    X \coloneqq \frac{x-a}{a} = (T+1)^{m} - 1
    = \binom{m}{1}T + \binom{m}{2}T^{2} + \cdots + \binom{m}{m}T^{m},
\]
\[
    Y \coloneqq \frac{y-b}{b} = (1+\alpha)T + T^{2} + O\!\left(T^{q}\right).
\]
We also define
\begin{align*}
    t_0 &\coloneqq (1+\alpha)\cdot\frac{x-a}{a} - m\cdot\frac{y-b}{b} \\
        &= (1+\alpha)\sum_{i=2}^{m}\binom{m}{i}T^{i}
           - m\alpha T^{2} + O\!\left(T^{q}\right) \\
        &= \left(\binom{m}{2} + \left(\binom{m}{2}-m\right)\alpha\right)T^{2}
           + (1+\alpha)\sum_{i=3}^{m}\binom{m}{i}T^{i}
           + O\!\left(T^{q}\right).
\end{align*}

\begin{defn}
    Let $\iota \in \overline{K}$ be a square root of $-1$. Since $4 \mid q+1$,
    we have $\iota \in K$ and $\iota \neq -\iota$. We define, for each
    $k \in \mathbb{N}$:
    \[
    \mathcal{P}_{k}(s) \coloneqq
    \frac{\iota(s+\iota)^{4k} - \iota(s-\iota)^{4k}}{2\cdot(s - s^{2})}
    \quad \text{and} \quad
    \mathcal{Q}_{k}(s) \coloneqq
    \frac{(1-\iota)(s+\iota)^{4k-1} + (1+\iota)(s-\iota)^{4k-1}}{2\cdot(s-1)}.
    \]
\end{defn}

One can verify that, for each $k \in \mathbb{N}$, $\mathcal{P}_{k}(s)$ and
$\mathcal{Q}_{k}(s)$ are polynomials in $\mathbb{F}_{p}[s]$ having no common
roots. Our goal is to find, for each $\alpha$, the values $k \in \mathbb{N}$
for which there exists
\begin{equation}
    f_{k} \in
    \left\langle X^{i} \cdot Y^{j} \right\rangle_{i,j \in \mathbb{N}}^{i/2+j \le k}
    \subseteq \mathcal{L}\!\left(k \cdot (X)_{\infty}\right)
    \label{eq:fk L k Xinf}
\end{equation}
with $T$-expansion
\begin{equation}
    f_{k} = \mathcal{P}_{k}(\alpha) \cdot T^{4k-1}
           + \mathcal{Q}_{k}(\alpha) \cdot T^{4k}
           + O\!\left(T^{q}\right)
    \label{eq:fk Pk Qk}
\end{equation}
and $4k < q$. We will use these elements to describe the Weierstrass semigroup
at $P_{(a,b)}$.

We obtain these elements by successively cancelling terms of equal
$T$-valuation. We have $\mathcal{P}_1(s) = 4(s+1)$ and
$\mathcal{Q}_1(s) = s^2 + 4s + 1$. Since $\alpha \neq 1$, we may define
\[
    f_{1} \coloneqq
    \frac{(2\alpha+6) \cdot Y^{2} - (\alpha+1)^{2} t_{0}}{\alpha - 1},
\]
which satisfies \eqref{eq:fk L k Xinf} and \eqref{eq:fk Pk Qk}. If
$\mathcal{Q}_{1}(\alpha) = \alpha^{2} + 4\alpha + 1 \neq 0$, we may define
\begin{align*}
    \mathcal{Q}_{1}(\alpha)^{2} \cdot f_{2}
    \coloneqq{} &
    -2^{8}(\alpha+1)^{4} \cdot f_{1}
    + 2^{10}(\alpha+1)^{2} \cdot Y^{3}
    + 2^{6}(\alpha+1)^{2}(\alpha^{2}-8\alpha+1) \cdot f_{1}Y \\
    &- 2^{4}\mathcal{Q}_{1}(\alpha)\mathcal{Q}_{1}(-\alpha)\,f_{1}Y^{2}
    + \left(\mathcal{Q}_{2}(\alpha) + 16\alpha^{2}\mathcal{Q}_{1}(-\alpha)\right)f_{1}^{2},
\end{align*}
and $f_2$ satisfies \eqref{eq:fk L k Xinf} and \eqref{eq:fk Pk Qk}. Still
assuming $\mathcal{Q}_{1}(\alpha) \neq 0$, we obtain $f_{3}$ as a
$\mathbb{Z}[\alpha]$-linear combination of
$f_{2},\, f_{1}^{2}Y,\, f_{1}^{2}Y^{2},\, f_{1}^{3},\, f_{1}f_{2}$,
divided by $\mathcal{Q}_{1}(\alpha)^{4}$.

Now, for $k \geq 4$, given $f_{1}, \ldots, f_{k-1}$ as in
\eqref{eq:fk L k Xinf} and \eqref{eq:fk Pk Qk}, and assuming
$\mathcal{P}_{k-3}(\alpha) \neq 0$, $\alpha + 1 \neq 0$, and
$\alpha^{2} + 1 \neq 0$, we may define
\[
    f_{k} \coloneqq
    \frac{\mathcal{P}_{k-2}(\alpha)\mathcal{P}_{2}(\alpha) \cdot f_{k-1}f_{1}
          + \mathcal{P}_{k-1}(\alpha)\mathcal{P}_{1}(\alpha) \cdot f_{k-2}f_{2}}
         {2^{2}(\alpha+1)(\alpha^{2}+1)^{3}\mathcal{P}_{k-3}(\alpha)},
\]
which also satisfies \eqref{eq:fk L k Xinf} and \eqref{eq:fk Pk Qk}.

For most places $P$, there exists a minimal value of $k$ for which
$\mathcal{P}_{k}(\alpha) = 0$. Each such value yields a different Weierstrass
semigroup.
\begin{lemma}\label{lem:I min Pk}
    Suppose that $\alpha \notin \{\iota, -\iota\}$. Then the set
    $\{k \ge 1 \mid \mathcal{P}_{k}(\alpha) = 0\}$ is nonempty, and we denote
    its minimum by
    \[
        I \coloneqq \min\left\{ k \ge 1 \mid \mathcal{P}_{k}(\alpha) = 0 \right\}.
    \]
    Moreover, $I$ is a divisor of $\frac{q+1}{4}$ (in particular,
    $4I \le q+1$).
\end{lemma}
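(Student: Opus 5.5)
The plan is to rewrite the vanishing of $\mathcal{P}_k(\alpha)$ as a condition on a root of unity, and then read off both claims from the order of that root of unity.

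\emph{Step 1: where $\alpha$ lives.} Since $4 \mid q+1$, we have $q \equiv 3 \pmod 4$. Hence $-1$ is not a square in $\mathbb{F}_q$, so $\iota \in \mathbb{F}_{q^2}\setminus\mathbb{F}_q$ and $\iota^q = -\iota$. On the other hand,
\[
\alpha = \frac{A^{q+1}}{-B^{q+1}}
\]
is a ratio of norms from $\mathbb{F}_{q^2}$ (the place is rational, so $A,B$ may be taken in $\mathbb{F}_{q^2}$). Therefore $\alpha \in \mathbb{F}_q$, i.e.\ $\alpha^q = \alpha$.

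Two values must be excluded before dividing by $s-s^2$:
\begin{itemize}
\item $\alpha \neq 0$, because $a \neq 0$;
\item $\alpha \neq 1$, because $\alpha = a^{(q+1)/m}/(1+a^{(q+1)/m})$ can never equal $1$.
\end{itemize}
So for $k \ge 1$ we have $\mathcal{P}_k(\alpha) = 0$ if and only if $(\alpha+\iota)^{4k} = (\alpha-\iota)^{4k}$.

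\emph{Step 2: reduce to a root of unity.} Since $\alpha \notin \{\iota,-\iota\}$, the element
\[
r \coloneqq \frac{\alpha+\iota}{\alpha-\iota} \in \mathbb{F}_{q^2}^*
\]
is well defined and nonzero. The condition from Step 1 becomes $r^{4k} = 1$.

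Apply Frobenius, using $\alpha^q = \alpha$ and $\iota^q = -\iota$:
\[
r^q = \frac{\alpha-\iota}{\alpha+\iota} = r^{-1}.
\]
Hence $r^{q+1} = 1$. Let $d$ be the multiplicative order of $r$; then $d \mid q+1$.

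\emph{Step 3: identify $I$.} We have $\mathcal{P}_k(\alpha) = 0$ if and only if $d \mid 4k$. In particular $k = (q+1)/4$ works, so the set in the statement is nonempty. Its minimum is
\[
I = \frac{d}{\gcd(d,4)}.
\]

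It remains to show $I \mid N$, where $N \coloneqq (q+1)/4$. Write $g = \gcd(d,4)$. From $d \mid 4N$ we get $\frac{d}{g} \mid \frac{4}{g}\,N$. Since $\gcd\!\left(\frac{d}{g},\frac{4}{g}\right) = 1$, it follows that $\frac{d}{g} \mid N$. Thus $I \mid (q+1)/4$, and in particular $4I \le q+1$.

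\emph{Main obstacle.} The only delicate point is Step 1: checking that $\alpha$ is $\mathbb{F}_q$-rational and that $\alpha \notin \{0,1\}$, so that the division by $s-s^2$ in the definition of $\mathcal{P}_k$ does not create or hide zeros. The rest is elementary group theory in the cyclic group of $(q+1)$-th roots of unity.
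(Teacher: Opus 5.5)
Your Step~1 has a genuine gap, and it sits exactly where the content of the lemma lies. The sentence ``the place is rational, so $A,B$ may be taken in $\mathbb{F}_{q^2}$'' is not justified. The extension $\mathbb{F}_{q^2}(\mathcal{H})/\mathbb{F}_{q^2}(\mathcal{X}_{4,q})$ is unramified at $P_{(a,b)}$, but that does not make the places above it rational: they are rational only if $a$ is a fourth power in $\mathbb{F}_{q^2}$. Since $A^{q+1}=(A^4)^{(q+1)/4}=a^{(q+1)/4}=:c$, your claim is equivalent to $c\in\mathbb{F}_q$, which is precisely what the paper's proof works to establish.

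Moreover, the claim is false without the hypothesis. Put $c=(\iota-1)/2$. Then $c^{q-1}=\iota$, so $c^{4(q-1)}=1$, and hence $c=a^{(q+1)/4}$ for some $a\in\mathbb{F}_{q^2}^*$ (the image of $a\mapsto a^{(q+1)/4}$ is the subgroup of order $4(q-1)$). Also $-(c^2+c)=\tfrac12\in\mathbb{F}_q^*$, so some $b\in\mathbb{F}_{q^2}^*$ satisfies $b^{q+1}=\tfrac12$. The resulting $P_{(a,b)}$ is a rational place outside $\mathcal{O}\cup\mathcal{O}'$ with $\alpha=c/(1+c)=\iota\notin\mathbb{F}_q$; it is one of the places treated in Proposition~\ref{prop:alpha-iota}. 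Your Step~1 would ``prove'' $\alpha\in\mathbb{F}_q$ for it as well. The warning sign is that you use $\alpha\notin\{\iota,-\iota\}$ only to make $r$ well defined, whereas it is exactly what makes $\alpha\in\mathbb{F}_q$ true.

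To repair Step~1, use the curve equation: $c^2+c=-b^{q+1}\in\mathbb{F}_q$ gives $(c-c^q)(c+c^q+1)=0$. Put $\zeta=c^{q-1}$; it is a fourth root of unity because $c^{4(q-1)}=a^{q^2-1}=1$. Suppose $c\notin\mathbb{F}_q$, so $\zeta\neq 1$. Then $c^q=-(1+c)$, i.e.\ $c(1+\zeta)=-1$, which rules out $\zeta=-1$. Hence $\zeta=\pm\iota$, and
\[
\alpha=\frac{c}{1+c}=-\frac{c}{c^q}=-\zeta^{-1}=\zeta\in\{\iota,-\iota\},
\]
contradicting the hypothesis. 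So $c\in\mathbb{F}_q$ and $\alpha\in\mathbb{F}_q$. This is essentially the paper's argument, which shows $a^{(q^2-1)/4}=1$ using $\alpha^2+1\neq 0$.

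With that in place, your Steps~2 and~3 are correct. Your explicit exclusion of $\alpha\in\{0,1\}$ before clearing the denominator $s-s^2$ is cleaner than the paper's treatment, which glosses over it. Likewise, the formula $I=d/\gcd(d,4)$ gives $I\mid\frac{q+1}{4}$ more directly than the paper does.
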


\begin{proof}
    We have $\mathcal{P}_{k}(\alpha) = 0$ if and only if
    $\left(\frac{\alpha+\iota}{\alpha-\iota}\right)^{4k} = 1$, thus it suffices
    to show that $\left(\frac{\alpha+\iota}{\alpha-\iota}\right)^{q+1} = 1$.
    In fact,
    \[
        \left(\frac{\alpha+\iota}{\alpha-\iota}\right)^{q}
        = \frac{\alpha^{q}+\iota^{-1+4\frac{q+1}{4}}}
               {\alpha^{q}-\iota^{-1+4\frac{q+1}{4}}}
        = \frac{\alpha^{q}-\iota}{\alpha^{q}+\iota},
    \]
    thus it suffices to show that $\alpha^{q} = \alpha$. Since
    $\alpha = \frac{a^{(q+1)/4}}{1+a^{(q+1)/4}}$, it suffices to show that
    $a^{(q+1)/4} \in \mathbb{F}_{q}$.

    Since $(a^{(q+1)/4})^{q-1} = a^{(q^{2}-1)/4} \neq 0$ is a fourth root of
    $a^{q^{2}-1} = 1$, we have
    $a^{(q^{2}-1)/4} \in \{1, -1, \iota, -\iota\}$.
    Given that $b^{q+1} + a^{2(q+1)/4} + a^{(q+1)/4} = 0$ and
    $b \in \mathbb{F}_{q^{2}} \smallsetminus \{0\}$, we have
    \[
        \left(a^{2\frac{q+1}{4}} + a^{\frac{q+1}{4}}\right)^{q-1}
        = \left(-b^{q+1}\right)^{q-1}
        = (-1)^{q-1} b^{q^{2}-1} = 1,
    \]
    so $a^{2(q+1)/4} + a^{(q+1)/4} \in \mathbb{F}_{q}$. Thus,
    \[
        a^{2\frac{q+1}{4}} + a^{\frac{q+1}{4}}
        = \left(a^{2\frac{q+1}{4}} + a^{\frac{q+1}{4}}\right)^{q}
        = a^{2\frac{q^{2}+q}{4}} + a^{\frac{q^{2}+q}{4}},
    \]
    that is,
    \[
        \left(a^{\frac{q+1}{4}}\right)^{2} + a^{\frac{q+1}{4}}
        = \left(a^{\frac{q^{2}-1}{4}} a^{\frac{q+1}{4}}\right)^{2}
          + a^{\frac{q^{2}-1}{4}} a^{\frac{q+1}{4}},
    \]
    \begin{align*}
        a^{\frac{q+1}{4}} - a^{\frac{q^{2}-1}{4}} a^{\frac{q+1}{4}}
        &= \left(a^{\frac{q^{2}-1}{4}} a^{\frac{q+1}{4}}\right)^{2}
           - \left(a^{\frac{q+1}{4}}\right)^{2} \\
        &= \left(a^{\frac{q^{2}-1}{4}} a^{\frac{q+1}{4}} + a^{\frac{q+1}{4}}\right)
           \left(a^{\frac{q^{2}-1}{4}} a^{\frac{q+1}{4}} - a^{\frac{q+1}{4}}\right).
    \end{align*}

    Suppose, for a contradiction, that $a^{(q^{2}-1)/4} \neq 1$. 
    
    Then
    $a^{\frac{q^{2}-1}{4}} a^{\frac{q+1}{4}} + a^{\frac{q+1}{4}} = -1$
    and $a^{\frac{q^{2}-1}{4}} + 1 = -a^{-\frac{q+1}{4}}$. Also,
    \begin{align*}
        0 \neq \alpha^{2} + 1
        &= \left(\frac{a^{\frac{q+1}{4}}}{1+a^{\frac{q+1}{4}}}\right)^{2} + 1
         = \frac{a^{2\frac{q+1}{4}}
                 + \left(1 + 2a^{\frac{q+1}{4}} + a^{2\frac{q+1}{4}}\right)}
                {1 + 2a^{\frac{q+1}{4}} + a^{2\frac{q+1}{4}}},
    \end{align*}
    so $1 + 2(a^{\frac{q+1}{2}} + a^{\frac{q+1}{4}}) \neq 0$. Multiplying by
    $a^{-(q+1)/2}$, we obtain
    \begin{align*}
        0 &\neq a^{-\frac{q+1}{2}} + 2 + 2a^{-\frac{q+1}{4}} \\
          &= \left(a^{\frac{q^{2}-1}{4}} + 1\right)^{2}
             + 2 - 2\left(a^{\frac{q^{2}-1}{4}} + 1\right) \\
          &= \left(a^{\frac{q^{2}-1}{2}} + 2a^{\frac{q^{2}-1}{4}} + 1\right)
             + 2 - \left(2a^{\frac{q^{2}-1}{4}} + 2\right) \\
          &= a^{\frac{q^{2}-1}{2}} + 1,
    \end{align*}
    so $a^{\frac{q^{2}-1}{4}} \notin \{\iota, -\iota\}$, leaving only the
    possibility $a^{\frac{q^{2}-1}{4}} = -1$. But then
    $a^{\frac{q^{2}-1}{4}} + 1 = 0$, which gives
    $-a^{-\frac{q+1}{4}} = 0$, contradicting $a \neq 0$.
\end{proof}

We now determine the Weierstrass semigroups at every rational point of 
$\mathcal{X}_{m,q}$. Let $P_{(a,b)} \in \mathcal{X}_{m,q}(\mathbb{F}_{q^2})$ 
with $ab \neq 0$ be a rational place outside $\mathcal{O}$, and let 
$P_{(\overline{a},0)}$ be a rational place of $\mathcal{O}_m$, so that 
$\overline{a}^{\frac{q+1}{m}} + 1 = 0$. For $k \in \mathbb{N}$ and 
$h_k \in \mathcal{L}(k \cdot (x)_\infty)$, define
\begin{equation}\label{div-Gk}
    G_k \coloneqq \frac{h_k \cdot f_{P_{(\overline{a},0)}}^k}
    {f_{P_{(a,b)}}^k \cdot (x - \overline{a})^k},
\end{equation}
where $f_{P_{(a,b)}}$ and $f_{P_{(\overline{a},0)}}$ are chosen via the 
fundamental relations
\[
    (f_{P_{(a,b)}}) = (q+1)P_{(a,b)} - (q+1)P_\infty^1,
\]
\[
    (f_{P_{(\overline{a},0)}}) = (q+1)P_{(\overline{a},0)} - (q+1)P_\infty^1.
\]
Since $(x - \overline{a}) = (q+1)P_{(\overline{a},0)} - (x)_\infty$, a 
straightforward divisor computation gives
\[
    (G_k) = (h_k) - k(q+1)P_{(a,b)} + k(x)_\infty,
\]
and in particular,
\[
    k(q+1) - v_{P_{(a,b)}}(h_k) \in H(P_{(a,b)}).
\]

\begin{proposition}\label{conj-Yuri-4}
    Suppose $\alpha \notin \{\iota, -\iota\}$ and 
    $\mathcal{Q}_1(\alpha) = \alpha^2 + 4\alpha + 1 \neq 0$. 
    Then there exists a divisor $I$ of $\frac{q+1}{4}$ such that 
    $H(P_{(a,b)})$ is generated by
    \[
        \{q,\, q+1,\, q-1\} \cup 
        \Bigl\{\, k(q+1) - (4k-1) - [k = I] 
        \;\Big|\; k = 1, \ldots, \min\!\Bigl(I,\, \tfrac{q+1}{4} - 1\Bigr) \Bigr\},
    \]
    where $[k = I]$ denotes the Iverson bracket, equal to $1$ if $k = I$ 
    and $0$ otherwise.
\end{proposition}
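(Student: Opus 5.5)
The plan is the same as for Propositions~\ref{prop:num-zeros}--\ref{prop:num-inf-odd}. Call $S$ the numerical semigroup generated by the displayed set. I will first show $S\subseteq H(P_{(a,b)})$ by exhibiting functions with the required pole orders. I will then show that $S$ has genus at most $g=g(\mathcal{X}_{4,q})=\frac{q^2-2q+5}{8}$. Since a Weierstrass semigroup has exactly $g$ gaps, containment plus this bound forces equality.

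\textbf{Producing the generators.} The value $q+1$ comes from $f_{P_{(a,b)}}$, since $P_{(a,b)}$ is rational. For the rest I use the functions $G_k$ of \eqref{div-Gk}: every $h_k\in\mathcal{L}(k\cdot(x)_\infty)$ yields $k(q+1)-v_{P_{(a,b)}}(h_k)\in H(P_{(a,b)})$. Because $(y)_\infty=D_\infty$ and $(x)_\infty=2D_\infty$ for $m=4$, every monomial $X^iY^j$ with $i/2+j\le k$ lies in $\mathcal{L}(k(x)_\infty)$, which justifies \eqref{eq:fk L k Xinf}. The valuations needed are as follows.
\begin{itemize}
\item For $k=1$, take $h_1=X$. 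Its valuation is $1$ since $\binom41=4\neq0$ ($p$ odd), and this gives $q$.
\item For $k=1$, take $h_1$ a combination of $1,X,Y,Y^2$ of valuation exactly $2$, for instance $t_0$ or $Y^2-(1+\alpha)^2T^2$-type combinations. This gives $q-1$.
\item For each $1\le k\le\min(I,\frac{q+1}{4}-1)$, take $h_k=f_k$. By \eqref{eq:fk Pk Qk} its valuation is $4k-1$ when $\mathcal{P}_k(\alpha)\neq0$, i.e.\ for $k<I$. When $k=I$ the valuation is $4k$, because $\mathcal{P}_I(\alpha)=0$ and $\mathcal{P}_I,\mathcal{Q}_I$ have no common root. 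Here $4k<q$ holds, so the $O(T^q)$ tail does not interfere.
\end{itemize}

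\textbf{Checking the hypotheses of the recursion.}
\begin{itemize}
\item $\alpha\neq1$ is needed for $f_1$. If $\alpha=1$ then $A^{q+1}=-B^{q+1}=1+A^{q+1}$, which is absurd.
\item $\mathcal{Q}_1(\alpha)\neq0$ is needed for $f_2,f_3$, and this is an assumption of the statement.
\item $\alpha+1\neq0$ is needed for $k\ge4$. If $\alpha=-1$ then $A^{q+1}=-1-A^{q+1}$, so $2a^{(q+1)/4}+1=0$. That means $P_{(a,b)}\in\mathcal{O}'$, where $v(E_b)=2$; I would need to check this case separately or rule it out.
\item $\alpha^2+1\neq0$ is needed for $k\ge4$, and follows from $\alpha\neq\pm\iota$.
\item $\mathcal{P}_{k-3}(\alpha)\neq0$ is needed for $k\ge4$, and holds since $k-3<I$.
\end{itemize}
Lemma~\ref{lem:I min Pk} supplies $I$ and shows $I\mid\frac{q+1}{4}$.

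\textbf{Genus bound (main obstacle).} This step has to handle every divisor $I$ of $\frac{q+1}{4}$ at once. I would imitate Proposition~\ref{genus-P0}, working modulo the smallest nonzero element $q-1$. For each residue $v$ I will exhibit an element $f(v)\in S$ with $f(v)\equiv v \pmod{q-1}$, built from sums of $q-1,q,q+1$ and the elements $k(q+1)-(4k-1)$. Modulo $q-1$ these are $2k-(4k-1)=1-2k$, so one gets residues $v$ at height roughly $\lceil v/(2k)\rceil$-type levels, together with the shifted element at $k=I$. I will then evaluate $\sum_v\lfloor f(v)/(q-1)\rfloor$ and check that it equals $g$.

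The difficulty is that the sets of residues reachable cheaply change once $k$ reaches $I$. The extra element $I(q+1)-4I$ must be used to fill in the residues that are no longer reachable by $k(q+1)-(4k-1)$ for $k>I$. This is combined with sums of earlier generators, in the spirit of the inductive step in Proposition~\ref{genus-Pinf}. The count should then telescope to $\frac{q^2-2q+5}{8}$ independently of $I$. I would first verify this count for $I=1$ and $I=\frac{q+1}{4}$ and then interpolate the bookkeeping.
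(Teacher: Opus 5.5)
Your plan matches the paper's: get $S\subseteq H(P_{(a,b)})$ from \eqref{div-Gk} with $h_k=f_k$, then compare genera. The containment half is essentially fine, and your check of the recursion's hypotheses is more explicit than the paper's. Three small repairs are needed. First, the case $\alpha=-1$ that you leave open is vacuous. Since $\mathcal{P}_1(s)=4(s+1)$, it gives $I=1$, so only $f_1$ is used; the recursion for $k\ge4$, the only place $\alpha+1\neq0$ is needed, is invoked only when $I\ge4$. Second, for $q-1$ take $h_1=Y^2$ if $\alpha\neq-1$ and $h_1=Y$ if $\alpha=-1$, since $Y=(1+\alpha)T+\alpha T^2+O(T^q)$. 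Your $t_0=(6+2\alpha)T^2+\cdots$ has valuation $\ge3$ when $\alpha=-3$, and ``$Y^2-(1+\alpha)^2T^2$'' is not a function on $\mathcal{X}_{4,q}$. Third, with $(x)_\infty=2D_\infty$ and $(y)_\infty=D_\infty$ the monomial condition is $2i+j\le2k$, not $i/2+j\le k$. This is harmless here, since only $X,Y,Y^2$ and products of earlier $f_j$ are used.

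The genuine gap is the genus bound $g(S)\le g$, which carries all the content of the equality. The paper proves it separately as Lemma~\ref{lem:bound genus semigr m4}; you only announce it. Your starting point is also false: $q-1$ is not the least nonzero element of $S$, because the $k=1$ generator is $b_1=q-2-[I=1]$. Any nonzero element could serve as modulus in the Ap\'ery count, but the bookkeeping then has to be done for that modulus, and you have not done it.

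Write $n=\frac{q+1}{4}$ and $b_k=k(q-3)+1-[k=I]$. For $1<I<n$ the paper works modulo $a=q-2$, where $b_k=ka-(k-1+[k=I])$. Residues $1\le v\le 3(n-1)$ are reached at level $\lceil v/3\rceil$ by sums of $q-1,q,q+1\equiv1,2,3$. Residues $v=a-w$ with $1\le w\le n-1$ are reached by multiples of $b_I$ plus at most two further generators, at level $w-[I\mid w]+[w\equiv-1\pmod{I}]$. It is exactly $I\mid n$, from Lemma~\ref{lem:I min Pk}, that makes the corrections net to $+1$: there are $n/I$ values $w\equiv-1\pmod{I}$ but only $n/I-1$ multiples of $I$ in $\{1,\dots,n-1\}$. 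So the total is $3\binom{n}{2}+\binom{n}{2}+1=2n^2-2n+1=g$. Your claim that the count is ``independent of $I$'' holds only because of this divisibility, which your sketch never uses.

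Three cases need separate treatment:
\begin{itemize}
\item $I=n$, where the shifted element is not a generator and $w=n-1$ costs one extra level;
\item $I=1$, where $a=q-3$, $q-1,q,q+1\equiv2,3,4$, and the count is $(n+1)+\sum_{v=2}^{a-1}\lceil v/4\rceil$;
\item $(q,I)=(7,2)$.
\end{itemize}
Until this construction is carried out, your argument proves only $S\subseteq H(P_{(a,b)})$, not equality.
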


\begin{proof}
    Let $I$ be as in Lemma~\ref{lem:I min Pk}. By the discussion preceding 
    this proposition, for each 
    $k \in \{1, \ldots, \min(I, \frac{q+1}{4} - 1)\}$ 
    there exist functions $f_k$ satisfying \eqref{eq:fk L k Xinf} 
    and~\eqref{eq:fk Pk Qk} whose $T$-valuation equals 
    $(4k-1) + [k = I]$ (note that $\mathcal{P}_k(\alpha) = 0$ 
    implies $\mathcal{Q}_k(\alpha) \neq 0$, and $4k < q$). 
    Applying~\eqref{div-Gk} and the remark following it yields
    \[
        k(q+1) - (4k-1) - [k=I] \;\in\; H(P_{(a,b)}).
    \]
    It is known that $q, q+1 \in H(P)$ for every rational place $P$ of a 
    maximal curve (see, e.g., \cite[Proposition~1]{FM}). Setting $h_k = Y$ 
    when $\alpha = -1$, or $h_k = Y^2$ when $\alpha \neq -1$, 
    in~\eqref{div-Gk} shows that $q - 1 \in H(P_{(a,b)})$. It remains to 
    verify that the stated generators produce a numerical semigroup of genus 
    at most $g(\mathcal{X}_{m,q})$; this is established in 
    Lemma~\ref{lem:bound genus semigr m4}.
\end{proof}

\begin{remark}\label{rmk:m4}
   \rm{ Since $\mathcal{P}_1(s) = 4(s+1)$, we have $I = 1$ whenever $\alpha = -1$. 
    In particular,
    \[
        H(P_{(a,b)}) = H(P_{(\overline{a},0)})
    \]
    for $2a^{(q+1)/4} + 1 = 0$ and $\overline{a}^{(q+1)/4} + 1 = 0$.}
\end{remark}

\begin{proposition}\label{prop:alpha-iota}
    Suppose $\alpha \in \{\iota, -\iota\}$. Then $H(P_{(a,b)})$ is generated by
    \[
        \{q,\, q+1,\, q-1\} \cup 
        \Bigl\{\, k(q+1) - (4k-1) \;\Big|\; 
        k = 1, \ldots, \tfrac{q+1}{4} - 1 \Bigr\}.
    \]
\end{proposition}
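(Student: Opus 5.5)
We follow the proof of Proposition~\ref{conj-Yuri-4}, with the case $\alpha \in \{\iota,-\iota\}$ playing the role of ``$I = \infty$''. In this case $\mathcal{P}_k(\alpha)$ never vanishes: with $\alpha = \pm\iota$, one of $\alpha+\iota$, $\alpha-\iota$ is $0$ and the other is $\pm 2\iota \neq 0$. Hence $\mathcal{P}_k(\alpha) = \pm\iota(\pm 2\iota)^{4k}/(2(\alpha-\alpha^2)) \neq 0$ for every $k \ge 1$. Here we use $p \neq 2$ and $\alpha \notin \{0,1\}$.

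\textbf{Hypotheses of the recursion.} We also need $\mathcal{Q}_1(\alpha) = \alpha^2+4\alpha+1 = 4\alpha \neq 0$, $\alpha+1 \neq 0$ and $\alpha - 1 \neq 0$. The factor $\alpha^2+1$ in the denominator of the recursion for $f_k$, $k \ge 4$, is the problem, since it vanishes. So first I would redo the construction directly for $\alpha = \pm\iota$.

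Take $\alpha = \iota$. Then $(s-\iota)$ vanishes and the target expansion simplifies to
\[
f_k = c_k\,(\alpha+\iota)^{4k}\bigl(\tfrac{\iota}{2(\alpha-\alpha^2)}T^{4k-1} + \tfrac{1-\iota}{2(\alpha-1)(\alpha+\iota)}T^{4k}\bigr) + O(T^q).
\]
This suggests a product formula in place of the three-term recursion. Define $f_k$ as a suitable scalar multiple of $f_{k-1}f_1$ corrected by a multiple of $f_{k-2}f_2$, chosen to kill the $T^{4k-2}$ term. Alternatively, observe that the two-dimensional data $(\mathcal{P}_k,\mathcal{Q}_k)$ at $\alpha=\pm\iota$ are geometric in $k$, so that $f_{k-1}f_1 - \lambda f_{k-2}f_2$ with a nonzero constant $\lambda$ (independent of $\alpha^2+1$) works. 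One checks that the coefficient of $T^{4k-1}$ remains nonzero, and that the weight condition $i/2+j\le k$ is preserved because weights add under products. The base cases $f_1,f_2,f_3$ are available as stated, since $\mathcal{Q}_1(\alpha)\neq0$.

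This yields, for $k=1,\dots,\frac{q+1}{4}-1$ (so that $4k<q$), functions $f_k\in\mathcal{L}(k(X)_\infty)$ with $v_{Q_{(A,B)}}(f_k)=4k-1$. Since $Q_{(A,B)}\mid P_{(a,b)}$ is unramified, this valuation equals $v_{P_{(a,b)}}(f_k)$. Applying \eqref{div-Gk} with $h_k=f_k$ gives $k(q+1)-(4k-1)\in H(P_{(a,b)})$. As before, $q,q+1\in H(P_{(a,b)})$ by \cite[Proposition~1]{FM}, and $q-1\in H(P_{(a,b)})$ via $h_1=Y^2$ (valid because $\alpha\neq-1$).

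\textbf{Equality of semigroups.} The semigroup $S$ generated by the listed elements is contained in $H(P_{(a,b)})$. It then suffices to show that $g(S)\le g(\mathcal{X}_{4,q}) = (q^2-2q+5)/8$. I would do this exactly as in Lemma~\ref{lem:bound genus semigr m4}, which is the $I=\frac{q+1}{4}$-type count: the generator set here is the one of Proposition~\ref{conj-Yuri-4} with the Iverson correction absent and the maximal range $k\le\frac{q+1}{4}-1$. If that lemma is stated only for finite $I$, I would reproduce its Apéry-type count in the style of Proposition~\ref{genus-P0}, using the smallest generator $a=q-3$. I would choose $f(v)$ for each residue as a sum of generators $k(q+1)-(4k-1)$ together with $q-1,q,q+1$, and sum $\lfloor f(v)/a\rfloor$.

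The main obstacle is the construction of the $f_k$ for $k\ge4$ without dividing by $\alpha^2+1$. I would try first the ansatz $f_k=f_{k-1}f_1-\lambda_k f_{k-2}f_2$ and solve for $\lambda_k$ from the $T^{4k-2}$ coefficient, checking that the resulting $T^{4k-1}$ coefficient is nonzero.
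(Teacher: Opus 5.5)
Everything around the core construction matches the paper: $q-1,q,q+1\in H(P_{(a,b)})$, the transfer of valuations through the unramified place $Q_{(A,B)}$, the use of \eqref{div-Gk}, and the closing step via Lemma~\ref{lem:bound genus semigr m4} with $I=\frac{q+1}{4}$. For that $I$ the Iverson term vanishes on the whole range $k\le\frac{q+1}{4}-1$, so the lemma applies directly. One small slip: the smallest nonzero element is $q-2$ (the $k=1$ generator), not $q-3$.

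The gap is the construction of $f_k$ for $k\ge4$, and the ansatz you propose provably fails. At $\alpha=\pm\iota$ one has $\mathcal{P}_j(\alpha)=2^{4j-2}(1+\alpha)$ and $\mathcal{Q}_j(\alpha)=2^{4j-2}\alpha$. Since $Y=(1+\alpha)T+\alpha T^2+O(T^q)$, condition \eqref{eq:fk Pk Qk} says exactly that $f_j\equiv 2^{4j-2}T^{4j-2}Y \pmod{T^q}$. Consequently
\[
f_{k-1}f_1\equiv f_{k-2}f_2\equiv 2^{4k-4}T^{4k-4}Y^2 \pmod{T^q}.
\]
Killing the $T^{4k-2}$ term therefore forces $\lambda_k=1$, and then $f_{k-1}f_1-\lambda_k f_{k-2}f_2$ has valuation at least $q$. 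Its $T^{4k-1}$ coefficient is $0$, not nonzero. This is exactly the degeneracy that the factor $(\alpha^2+1)^3$ in the denominator of the general recursion compensates for: the numerator of that recursion collapses at $\alpha=\pm\iota$.

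No product formula of this kind can be rescued. Take a constant-coefficient combination of products of $r\ge2$ of the $f_j$ with indices summing to $k$. Each such product is congruent to a multiple of $T^{4k-2r}Y^r$, a polynomial of degree $4k<q$ divisible by $\bigl((1+\alpha)+\alpha T\bigr)^2$. By contrast, $T^{4k-2}Y$ is divisible by that factor only once, so the combination cannot equal $f_k$.

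What you need is a relation that also uses lower-weight terms and extra powers of $Y$. Write $f_k=2^{4k-2}g_k$, so that $g_j\equiv T^{4j-2}Y$ and $g_k\equiv T^8g_{k-2}$. A direct expansion shows that for $\alpha^2=-1$, with $Y=(1+\alpha)T+\alpha T^2$, the following holds as polynomials in $T$:
\[
T^8=-4T^4-2\alpha T^2Y^2+2\alpha T^2Y^3+5T^4Y^2-4\alpha T^6Y .
\]
Multiply by $g_{k-2}$ and use $T^2Y\equiv g_1$ and $T^4g_{k-2}\equiv g_{k-1}$. This yields the paper's recurrence
\[
g_k=-4g_{k-1}-2\alpha g_{k-2}g_1Y+2\alpha g_{k-2}g_1Y^2+5g_{k-2}g_1^2-4\alpha g_{k-1}g_1 .
\]
Every term on the right lies in $\mathcal{L}(k\cdot(X)_\infty)$. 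Starting from $g_1,g_2$ (obtained from $f_1,f_2$), induction gives $g_k\equiv T^{4k-2}Y \pmod{T^q}$, hence valuation exactly $4k-1$ for all $k\le\frac{q+1}{4}-1$. With this in place of your ansatz, your remaining steps go through as in the paper.
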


\begin{proof}
    A direct computation gives
    \[
        \mathcal{P}_k(\alpha) 
        = \frac{\alpha(2\alpha)^{4k}}{2(\alpha+1)} 
        = 2^{4k-2}(\alpha+1)
        \qquad \text{and} \qquad
        \mathcal{Q}_k(\alpha) 
        = \frac{(1-\alpha)(2\alpha)^{4k-1}}{2(\alpha-1)} 
        = 2^{4k-2}\alpha.
    \]
    As before, there exist functions $f_1, f_2, \ldots$ satisfying 
    \eqref{eq:fk L k Xinf} and~\eqref{eq:fk Pk Qk}, though their 
    construction differs in this case. Write $f_k \coloneqq 2^{4k-2} g_k$, 
    where $f_1, f_2, f_3$ are as previously defined, and for $k \geq 3$ 
    the functions $g_k$ satisfy the recurrence
    \[
        g_k \coloneqq -4\, g_{k-1} 
        - 2\alpha\, g_{k-2} g_1 Y 
        + 2\alpha\, g_{k-2} g_1 Y^2 
        + 5\, g_{k-2} g_1^2 
        - 4\alpha\, g_{k-1} g_1.
    \]
    For each $k \in \{1, \ldots, \frac{q+1}{4} - 1\}$, the function $g_k$ 
    lies in $\mathcal{L}(k \cdot (X)_\infty)$ and has $T$-valuation equal 
    to $4k - 1$, so that
    \[
        k(q+1) - (4k-1) \;\in\; H(P_{(a,b)}).
    \]
    Applying~\eqref{div-Gk} with $h_k \in \{1, X, Y^2\}$ yields 
    $\{q+1, q, q-1\} \subseteq H(P_{(a,b)})$. To conclude that the 
    stated generators produce the full Weierstrass semigroup, we apply 
    Lemma~\ref{lem:bound genus semigr m4} with $I \coloneqq \frac{q+1}{4}$.
\end{proof}
   
	\begin{proposition}\label{prop:Q1-zero}
    Let $P_{(a,b)}$ be a rational place with 
    $\mathcal{Q}_1(\alpha) = \alpha^2 + 4\alpha + 1 = 0$. Then 
    $H(P_{(a,b)})$ is generated by
    \[
        \{q+1,\, q,\, q-1,\, q-2\} 
        \cup \{2(q+1) - 7\} 
        \cup \{3(q+1) - 12\}.
    \]
\end{proposition}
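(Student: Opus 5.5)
The strategy is the same as in Propositions~\ref{conj-Yuri-4} and~\ref{prop:alpha-iota}. First I will show that every listed generator lies in $H(P_{(a,b)})$, using~\eqref{div-Gk}: if $h_k\in\mathcal{L}(k\cdot(x)_\infty)$ has $T$-valuation $v$ at $Q_{(A,B)}$, then $k(q+1)-v\in H(P_{(a,b)})$. Then I will show that the numerical semigroup $S$ generated by the listed elements has genus at most $g(\mathcal{X}_{4,q})=\frac{q^2-2q+5}{8}$. Since $S\subseteq H(P_{(a,b)})$ and the latter has genus exactly $g$, equality follows.

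\textbf{Step 1: the generators $q+1,q,q-1,q-2$.} The elements $q+1$ and $q$ lie in $H(P)$ for every rational place $P$ of a maximal curve. For $q-1$, take $h_1=Y$ or $h_1=Y^2$ as in the proof of Proposition~\ref{conj-Yuri-4}. For $q-2$, note first that $\alpha\neq 1$ and $\alpha\neq -1$: otherwise $\mathcal{Q}_1(\pm1)\in\{6,-2\}$ would vanish, which is impossible since $p\ge 3$, and $p\neq 3$ would be needed for $\alpha=1$ anyway. Hence $f_1$ is defined. Its $T^3$-coefficient is $\mathcal{P}_1(\alpha)=4(\alpha+1)\neq0$, so $v_T(f_1)=3$ and $q+1-3=q-2\in H(P_{(a,b)})$.

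\textbf{Step 2: the generators $2(q+1)-7$ and $3(q+1)-12$.} Here the earlier construction breaks down, because $f_2$ and $f_3$ were obtained by dividing by powers of $\mathcal{Q}_1(\alpha)$. I will redo the cancellation directly by linear algebra in the truncated $T$-expansions
\[
X=4T+6T^2+4T^3+T^4, \qquad Y=(1+\alpha)T+T^2+O(T^q).
\]
\begin{itemize}
\item For $k=2$, take the span of the monomials $X^iY^j$ with $i/2+j\le 2$ and $(i,j)\neq(0,0)$. These are eight functions, and killing the coefficients of $T^1,\dots,T^6$ imposes six linear conditions, so some nonzero combination has valuation $\ge 7$.
\item For $k=3$, use products such as $f_1^3$, $f_1^2Y$, $f_1^2Y^2$, $f_1Y^3$ and $Y^4X^2$, together with the degree-$2$ element just constructed, to produce an element of $\mathcal{L}(3(X)_\infty)$ of valuation exactly $12$.
\end{itemize}
Throughout, I reduce coefficients using $\alpha^2=-4\alpha-1$, that is $\alpha=-2\pm\sqrt3$, so that every coefficient becomes linear in $\alpha$. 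This makes it checkable that the $T^7$-coefficient of the $k=2$ element and the $T^{12}$-coefficient of the $k=3$ element are nonzero for $p\ge5$. If $p=3$ they must be checked separately; in that case $\alpha=1$, which is excluded, so this case should not arise.

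Strictly, only membership in $H(P_{(a,b)})$ is needed. If an element had larger valuation than claimed, the resulting semigroup would properly contain $S$ and hence have genus $<g$, which is impossible. So for $k=2$ it suffices to produce some valuation in $\{7,8\}$ together with the genus argument. For $k=3$, however, I need valuation exactly $12$ (rather than $11$ or $13$), and I will verify this coefficient explicitly. I expect this to be the main computational obstacle.

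\textbf{Step 3: the genus bound.} Set $a:=q-2=\min(S\smallsetminus\{0\})$. Following the proof of Proposition~\ref{genus-P0}, I will define for each residue $v\in\{1,\dots,a-1\}$ an element $f(v)\in S$ with $f(v)\equiv v\pmod a$. These come from sums of the generators $q-2+\ell$ ($\ell=0,\dots,3$), $2q-5$ and $3q-9$:
\begin{itemize}
\item sums of $k$ elements from $\{q-2,\dots,q+1\}$ realise $v\equiv 0,\dots,3k \pmod a$ at level $k$;
\item replacing two of them by $2q-5$ reaches the residue $3k-1$ one level earlier in the appropriate range;
\item the element $3q-9$ fills in residue $9$ at level $3$.
\end{itemize}
Finally I will check that $\sum_{v=1}^{a-1}\lfloor f(v)/a\rfloor=\frac{q^2-2q+5}{8}$, using $q\equiv3\pmod4$. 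This bookkeeping, in particular for residues near multiples of $4$ at large levels, is the second delicate point.
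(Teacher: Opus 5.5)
Your architecture (membership via \eqref{div-Gk}, then a genus bound) is the paper's. However, Step~2, which carries all the content, fails as written.

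First, the function spaces are wrong. Since $(X)_\infty=2D_\infty$ and $(Y)_\infty=D_\infty$, the monomial $X^iY^j$ lies in $\mathcal{L}(k\cdot(X)_\infty)$ iff $2i+j\le 2k$. The condition $i/2+j\le k$ printed in \eqref{eq:fk L k Xinf} is a misprint; already $f_1$ uses $Y^2$. Your eight functions for $k=2$ include $X^3$, $X^4$ and $X^2Y$, with pole orders $6,8,5>4$ along $D_\infty$. Likewise $X^2Y^4\notin\mathcal{L}(3\cdot(X)_\infty)$ under either reading. For such $h_k$, $G_k$ has poles on $\mathcal{O}_\infty$ and yields nothing. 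The correct list for $k=2$ is $Y,\dots,Y^4,X,XY,XY^2,X^2$.

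Second, the expansion is $Y=(1+\alpha)T+\alpha T^2+O(T^q)$; the $+T^2$ is another misprint, and with it the given $f_1$ would not even have valuation $3$. So coefficient checks based on your displayed $Y$ would be wrong.

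Third, your fallback ``a valuation in $\{7,8\}$ suffices'' is backwards. A larger valuation $v$ yields the smaller element $2(q+1)-v$, e.g.\ $2q-6\notin S$, and no containment with $S$ arises. In fact $\ell(2(q+1)P)=13$ for every rational $P$ (read it off from $H(P_0^1)$), and you already have $12$ elements of $H(P_{(a,b)})\cap[0,2q+2]$. So exactly one element of $H(P_{(a,b)})$ lies in $[q+2,2q-5]$, and you must prove it is $2q-5$, i.e.\ exhibit valuation exactly $7$. Likewise at level $3$ you need $12$, not $11$. No dimension count decides either; the exact valuations are the proof, and they are left unproved.

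The idea that makes both provable is this. Since $\mathcal{Q}_1(\alpha)=0$, $f_1=4(1+\alpha)T^3+O(T^q)$ with no other terms. Also $X\in\langle Y,Y^2,f_1\rangle$, since its coefficient in $f_1$ is $-(1+\alpha)^3/(\alpha-1)\neq0$. So the relevant span is that of the $Y^jf_1^i$ with $j+2i\le 2k$, where $Y=T\bigl((1+\alpha)+\alpha T\bigr)+O(T^q)$. For $k=2$,
\[ Y^4-\tfrac{(1+\alpha)^2}{4}\,f_1Y-\tfrac{3\alpha}{4}\,f_1Y^2=(1+\alpha)\alpha^3T^7+\alpha^4T^8+O(T^q), \]
which has valuation $7$ because $\alpha\neq0,-1$ and $q>7$. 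For $k=3$, substitute $T=\frac{1+\alpha}{\alpha}S$. Then $Y\propto S+S^2$ and $f_1\propto S^3$, independently of $\alpha$. Reducing $(S+S^2)^6$ modulo the other $S^{3i}(S+S^2)^j$ ($j+2i\le 6$) leaves exactly $S^{12}$, with integer coefficients and monic pivots, so this holds in every characteristic. This gives valuation $12$ for $q>12$; $q=11$ is handled as in the paper.

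You must also exclude $q=7$. There valuation $7$ cannot be certified, and $H$ would be $\langle 5,6,7,8\rangle$, which does not contain $9$. The paper excludes it via $\alpha\in\mathbb{F}_q$ and $(\alpha+2)^2=3$.

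Finally, Step~3 misdescribes the new generators. Modulo $a=q-2$ one has $2q-5\equiv-1$ and $3q-9\equiv-3$, so they cheapen residues just below $a$, not ``residue $9$''. Done correctly, the genus of $S$ is $g$ if $12\mid q+1$ and $g-1$ otherwise, so you can only claim ``$\le g$''. That suffices. Unlike the paper's appeal to Lemma~\ref{lem:bound genus semigr m4} with $I=3$, it needs no quadratic reciprocity, and once $S\subseteq H$ is established it even forces $12\mid q+1$.
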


\begin{proof}
    Since $(\alpha+2)^2 = 3$ and $\alpha \neq 1$, the characteristic of 
    $\mathbb{F}_q$ is not $3$. Moreover, $\alpha \neq -1$, so the elements 
    $1, Y, Y^2 \in \mathcal{L}(1 \cdot (X)_\infty)$ have $T$-valuations 
    $0$, $1$, and $2$, respectively, giving $q+1, q, q-1 \in H(P_{(a,b)})$. 
    Furthermore,
    \[
        f_1 = 4(\alpha+1)T^3 + O(T^q) 
        \;\in\; \mathcal{L}(1 \cdot (X)_\infty),
    \]
    so $q - 2 \in H(P_{(a,b)})$.

    From the proof of Lemma~\ref{lem:I min Pk} and the fact that 
    $\alpha^2 + 1 = -4\alpha \neq 0$, we have $\alpha \in \mathbb{F}_q$. 
    In particular, $3$ is a square in $\mathbb{F}_q$ and $q \neq 7$. Define
    \begin{align*}
        g_{2,7} &\coloneqq \frac{1}{12}\Bigl[
            (-56\alpha - 208)Y^2 
            + (-58\alpha - 218)Y^3 
            + (-33\alpha - 123)Y^4 \\
            &\qquad\qquad 
            + (10\alpha + 38)XY 
            + (15\alpha + 57)XY^2
        \Bigr] \;\in\; \mathcal{L}(2 \cdot (X)_\infty),
    \end{align*}
    so that $g_{2,7} = T^7 - \frac{\alpha+1}{2}T^8 + O(T^q)$. 
    Since $q > 7$, the function $g_{2,7}$ has $T$-valuation $7$, 
    and hence $2(q+1) - 7 \in H(P_{(a,b)})$.

    Next, let $g_{3,12}$ be an appropriate $\mathbb{Z}[\alpha]$-linear 
    combination of $Y, \ldots, Y^6, X, XY, XY^3, XY^4$, divided by $3$, 
    such that $g_{3,12} \in \mathcal{L}(3 \cdot (X)_\infty)$ and 
    $g_{3,12} = T^{12} + O(T^q)$. If $q \neq 11$, then $q > 12$ and 
    $g_{3,12}$ has $T$-valuation $12$, giving 
    $3(q+1) - 12 \in H(P_{(a,b)})$. If $q = 11$, then 
    $3(q+1) - 12 = 24 = 2(q+1) \in H(P_{(a,b)})$, so the conclusion 
    holds in either case.

    It remains to apply Lemma~\ref{lem:bound genus semigr m4} with $I = 3$, 
    for which we need $3 \mid (q+1)/4$, i.e., $q \equiv -1 \pmod{12}$. 
    Since $\alpha$ satisfies a quadratic equation over $\mathbb{F}_p$ and 
    $q$ is not a perfect square (as $q \equiv -1 \pmod 4$), we have 
    $\alpha \in \mathbb{F}_q \cap \mathbb{F}_{p^2} = \mathbb{F}_p$. 
    Thus $3 = (\alpha+2)^2$ is a quadratic residue modulo $p \neq 2, 3$, 
    and the law of quadratic reciprocity gives
    \[
        (-1)^{\frac{3-1}{2} \cdot \frac{p-1}{2}} 
        = \left(\frac{3}{p}\right)\left(\frac{p}{3}\right) 
        = 1 \cdot (-1)^{[p \equiv -1\,(\mathrm{mod}\,3)]},
    \]
    so $[p \equiv -1 \pmod{4}] = [p \equiv -1 \pmod{3}]$, meaning 
    $p \equiv \pm 1 \pmod{12}$. Since $q \equiv -1 \pmod{4}$, we 
    conclude $p \equiv q \equiv -1 \pmod{12}$.
\end{proof}

	\begin{lemma}\label{lem:bound genus semigr m4}
    Let $I \in \{1, \ldots, \frac{q+1}{4}\}$ be a divisor of $\frac{q+1}{4}$. 
    The genus of the numerical semigroup
    \[
        Q(I) \coloneqq \langle q-1,\, q,\, q+1 \rangle 
        + \Bigl\langle k(q+1) - (4k-1) - [k=I] \;\Big|\; 
        k = 1, \ldots, \min\!\Bigl(I,\, \tfrac{q+1}{4} - 1\Bigr) \Bigr\rangle
    \]
    is at most $g = g(\mathcal{X}_{4,q})$.
\end{lemma}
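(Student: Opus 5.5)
The plan is to follow the strategy of Propositions~\ref{genus-P0} and~\ref{genus-Pinf}. Let $a \coloneqq q-1$, the smallest nonzero element of $Q(I)$. For each residue $v\in\{1,\ldots,a-1\}$ I will exhibit an element $f(v)\in Q(I)$ with $f(v)\equiv v \pmod a$. The same computation as in Proposition~\ref{genus-P0} then gives
\[
g(Q(I))\le \sum_{v=1}^{a-1}\left\lfloor \frac{f(v)}{a}\right\rfloor,
\]
so it suffices to make this sum at most $g$. Write $n\coloneqq\frac{q+1}{4}$. Since $m=4$ is even, the genus formula gives $g=\frac{q^2-2q+5}{8}=2n^2-2n+1$, and this is the target.

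Next I record the generators in the form ``level $\cdot\, a$ + residue''. Since $q=a+1$ and $q+1=a+2$, a sum of $j$ generators taken from $\{q,q+1\}$ equals $j a+s$ for every $s\in\{j,\ldots,2j\}$. For the special generators,
\[
k(q+1)-(4k-1)=ka-(2k-1),\qquad I(q+1)-4I=Ia-2I .
\]
Thus the $k$-th special generator has residue $a-(2k-1)$ at level $k-1$, and the $I$-th one has residue $a-2I$ at level $I-1$. Adding $j$ copies taken from $\{q,q+1\}$ gives residues $a-(2k-1)+s$ with $j\le s\le 2j$, at level $k-1+j$. I will then define $f(v)$ piecewise in three ranges.
\begin{itemize}
\item Small $v$: use $f(v)=\lceil v/2\rceil a+v$ (a sum of $\lceil v/2\rceil$ copies of $q$ and $q+1$), contributing $\lceil v/2\rceil$.
\item $v$ close to $a$, say $v=a-(2k-1)+s$ with $k\le \min(I,n-1)$: use the $k$-th special generator plus $j$ small generators, choosing $j$ as small as possible.
\item The remaining $v$ (only relevant when $I<n-1$): use sums of several special generators, in particular multiples of the $I$-th one, $t(Ia-2I)$, together with the $k<I$ ones. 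Here the hypothesis $I\mid n$ is what makes the multiples $tI$ reach exactly up to residue $a-2n$ without wasting levels.
\end{itemize}

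The final step is the counting. Group the residues by the level they attain, exactly as in the double sums $\sum_k k\cdot\#\{v:\ldots\}$ in the proofs of Propositions~\ref{genus-P0} and~\ref{genus-Pinf}. Then check that the total equals, or is at most, $2n^2-2n+1$. I will do this first for $I=n$, where no extra $I$-term appears (the case $\min(I,n-1)=n-1$, used in Proposition~\ref{prop:alpha-iota}); this should give exactly $g$ via a symmetric count of residues from both ends. After that I will treat a general divisor $I$.

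The main obstacle is the regime of small $I$ (for example $I=1$ or $I=3$, as in Remark~\ref{rmk:m4} and Proposition~\ref{prop:Q1-zero}). There, the residues in the middle of $\{1,\ldots,a-1\}$ are reached neither cheaply by the generic generators nor by a single special generator. One must combine $t$ copies of $Ia-2I$ with generators $ka-(2k-1)$, $k<I$, and with the small generators, and then verify the level bound. I would first try the choice $f(v)=t(Ia-2I)+(\text{$k$-th special})+(\text{small})$ with $t=\lfloor (a-v)/(2I)\rfloor$, and compare the resulting level sum with $2n^2-2n+1$ block by block over blocks of $2I$ consecutive residues. If the estimate turns out not to be tight in every block, I would use that only an upper bound is needed, and absorb the slack against the end blocks, where the count is exact.
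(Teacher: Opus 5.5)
Your proposal has a genuine gap. The whole content of this lemma is the explicit choice of $f(v)$ in every residue class together with an exact count. The representatives you do specify are not good enough, and your fallback cannot work.

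\textbf{The bound is sharp.} In fact $g(Q(I))=g$: for $q=11$ (so $n=3$, $g=13$) one checks directly that $Q(1)=\langle 8,10,11,12\rangle$ and $Q(3)=\langle 9,10,11,12,17\rangle$ both have exactly $13$ gaps. For fixed $v$, $\lfloor f(v)/a\rfloor$ is minimized exactly by the least element of $Q(I)$ in the class of $v$, and the sum of these minima equals $g(Q(I))$. So there is no slack to ``absorb against the end blocks'': every class must receive its minimal element.

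\textbf{Your representatives are not minimal.} Your recipe for $v$ near $a$ (one special generator plus elements of $\{q,q+1\}$) fails whenever $w:=a-v$ is even, because two special generators are one level cheaper. With $b_k:=k(q+1)-(4k-1)-[k=I]$ and $k_1,k_2\neq I$ one has $b_{k_1}+b_{k_2}=(k_1+k_2-1)a+\bigl(a-2(k_1+k_2)+2\bigr)$. For $q=11$, $I=3$, $a=10$ you would take $28$ and $36$ in the classes of $8$ and $6$, whereas $18=9+9$ and $26=17+9$ lie in $Q(3)$; your total is then $15>13$. In general your construction for $I=n$ gives $g+n-1$. Your third bullet has the same defect at even remainders $r$ of $w$ modulo $2I$. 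With $t=\lfloor w/(2I)\rfloor$ copies of $b_I$ plus one $b_k$, $k<I$, the remainder $r=2I-1$ (and, for $I\ge 2$, $r=2I-2$) is not reached at all. Moreover, sums of several special generators are needed for $I=n$ too, not only for $I<n-1$. Also, $q-1$ is not the smallest nonzero element of $Q(I)$; $b_1=q-2-[I=1]$ is. This is harmless, since $g(S)\le\sum\lfloor f(v)/a\rfloor$ holds for any nonzero modulus $a\in S$.

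\textbf{What is missing, and how to supply it.} The missing ingredient is the list of minimal representatives, and with your modulus $a=q-1$ it can be supplied.
\begin{itemize}
\item For $v\le 2n-1$, take $f(v)=\lceil v/2\rceil a+v$.
\item For $w=a-v\le 2n-2$, write $w=2It+r$ with $0\le r<2I$ and take:
\begin{itemize}
\item $tb_I$ if $r=0$;
\item $tb_I+b_k$ if $r=2k-1\le 2I-3$;
\item $tb_I+b_{k_1}+b_{k_2}$ with $1\le k_1,k_2<I$ and $k_1+k_2=r/2+1$ if $r\ge 2$ is even;
\item $(t+1)b_I+q$ if $r=2I-1$.
\end{itemize}
For $I=n$ only $t=0$ occurs and $b_I$ is never used.
\end{itemize}
The level of $f(v)$ is then $\lceil w/2\rceil-1+\epsilon(w)$, where $\epsilon(w)=1$ exactly when $r$ is even and nonzero or $r=2I-1$. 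Because $I\mid n$, $\epsilon$ equals $1$ exactly $n-1$ times on $[1,2n-2]$. The total is $n^2+(n-1)(n-2)+(n-1)=n^2+(n-1)^2=g$.

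\textbf{Comparison with the paper.} This would be a valid alternative to the paper's proof. The paper uses the true multiplicity $q-2-[I=1]$ as modulus, so that for $I>1$ the elements $q-1,q,q+1$ give level $\lceil v/3\rceil$ for $v\le 3n-3$. It then handles only the last $n-1$ classes, by multiples of $b_I$ (or $b_{n-1}$ when $I=n$) plus one $b_{R+1}$ or the pair $b_{I-1}+b_2$. Your modulus even avoids the paper's separate cases $I=1$ and $(q,I)=(7,2)$. As written, however, your proposal commits to non-minimal representatives and relies on a slack that does not exist, so it does not prove the lemma.
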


\begin{proof}
    If $(q, I) = (7, 2)$, then $Q(I) = \langle 5, 6, 7, 8 \rangle$ has 
    genus $5 = g$. Henceforth assume $(q, I) \neq (7, 2)$. Since
    \[
        k(q+1) - (4k-1) - [k=I] = k(q-3) + 1 - [k=I],
    \]
    the smallest positive element of $Q(I)$ is 
    $a \coloneqq q - 2 - [I=1]$ (noting that $\frac{q+1}{4} > 1$).

    We construct $f \colon \{1, \ldots, a-1\} \to Q(I)$ such that 
    $f(v) \equiv v \pmod{a}$ for each $v$ and 
    $\sum_{v=1}^{a-1} \lfloor f(v)/a \rfloor = g$.

    \medskip
    \noindent\textbf{Case $I > 1$} (so $a = q-2$ and $\frac{q+1}{4} > 2$).
    Let $v \in \{1, \ldots, a-1\}$.

    \begin{itemize}
        \item \textbf{Subcase $1 \le v \le 3\frac{q+1}{4} - 3$}. 
        Write $v = \lceil v/3 \rceil \cdot 3 - w$ with $w \in \{0,1,2\}$, 
        and define
        \begin{align*}
            f(v) &\coloneqq \Bigl(\lceil \tfrac{v}{3} \rceil - 2\Bigr)(q+1) 
            + \bigl(q+1 - [w \ge 1]\bigr) 
            + \bigl(q+1 - [w \ge 2]\bigr) \\
            &= \lceil \tfrac{v}{3} \rceil (q+1) - w 
            = v + \lceil \tfrac{v}{3} \rceil \cdot a,
        \end{align*}
        which lies in $Q(I)$ when $\lceil v/3 \rceil \ge 2$. 
        For $\lceil v/3 \rceil = 1$, we have 
        $f(v) \in \{q-1, q, q+1\} \subseteq Q(I)$.

        \item \textbf{Subcase $a - \frac{q+1}{4} + 1 \le v \le a-1$} 
        (equivalently, $1 \le a - v \le \frac{q+1}{4} - 1$). 
        Let $b_k \coloneqq k(q-3) + 1 - [k=I]$ denote the generators above 
        (so $a = b_1$). These satisfy 
        $b_k \equiv -k + 1 - [k=I] \pmod{a}$. Set
        \[
            s \coloneqq \bigl(-b_{I - [I=(q+1)/4]}\bigr) \bmod a 
            = I - 2\bigl[I = \tfrac{q+1}{4}\bigr] > 0,
        \]
        and write $a - v = sQ + R$ with $Q \in \mathbb{N}$ and 
        $R \in \{0, \ldots, s-1\}$. Define
        \[
            f(v) \coloneqq Q\, b_{I-[I=(q+1)/4]} + 
            \begin{cases}
                0 & (R = 0), \\
                b_{I-1} + b_2 & (I < \tfrac{q+1}{4},\ R = I-1), \\
                b_{R+1} & (\text{otherwise}).
            \end{cases}
        \]
        When $I < \frac{q+1}{4}$ and $R = I-1$, we have
        \[
            b_{I-1} + b_2 
            = (I-1)(q-3) + 1 + 2(q-3) + 1 
            = (R+1)(q-3) + 1 + (q-2),
        \]
        so in this subcase $f(v) = a\bigl(a - v + [R=I-1] - [R=0]\bigr) + v$.
        When $I = \frac{q+1}{4}$ (and $s = I-2$), one checks that 
        $Q = [a-v \in \frac{q+1}{4} - \{1,2\}]$ and 
        $[R=0] = [a-v = \frac{q+1}{4}-2]$, giving
        \[
            f(v) = a\bigl(a - v + Q - [R=0]\bigr) + v.
        \]
    \end{itemize}

    Summing $\lfloor f(v)/a \rfloor$ over $v \in \{1,\ldots,a-1\}$ 
    and using the formulas above, one computes:
    \begin{align*}
        \sum_{v=1}^{a-1} \Bigl\lfloor \frac{f(v)}{a} \Bigr\rfloor
        &= \sum_{v=1}^{3((q+1)/4-1)} \!\!\Bigl\lceil \tfrac{v}{3} \Bigr\rceil
        + \sum_{w=1}^{(q+1)/4-1} 
        \Bigl(w + \bigl[w \equiv I{-}1 \pmod{I}\bigr] 
        - \bigl[w \equiv 0 \pmod{I}\bigr]\Bigr) \\
        &= 3\binom{(q+1)/4}{2} + \binom{(q+1)/4}{2} 
        + \frac{(q+1)/4}{I} - \Bigl(\frac{(q+1)/4}{I} - 1\Bigr) = g
        \quad \bigl(I < \tfrac{q+1}{4}\bigr),
    \end{align*}
    and similarly
    \[
        \sum_{v=1}^{a-1} \Bigl\lfloor \frac{f(v)}{a} \Bigr\rfloor
        = \sum_{v=1}^{3((q+1)/4-1)} \!\!\Bigl\lceil \tfrac{v}{3} \Bigr\rceil
        + \sum_{w=1}^{(q+1)/4-1} 
        \Bigl(w + \bigl[w = \tfrac{q+1}{4}-1\bigr]\Bigr) = g
        \quad \bigl(I = \tfrac{q+1}{4}\bigr).
    \]

    \medskip
    \noindent\textbf{Case $I = 1$} (so $a = q-3$). 
    Let $v \in \{1, \ldots, a-1\}$.

    \begin{itemize}
        \item \textbf{Subcase $v = 1$}. Define
        \[
            f(1) \coloneqq \Bigl(\tfrac{q+1}{4} - 3\Bigr)(q+1) + 3q 
            = \Bigl(\tfrac{q+1}{4} + 1\Bigr)a + 1,
        \]
        which lies in $Q(I)$ when $(q+1)/4 \ge 3$. For $(q+1)/4 = 2$, 
        one verifies $f(1) = 13 = (7-1) + 7 \in Q(I)$.

        \item \textbf{Subcase $v \ge 2$}. Write $v - 1 = 4Q + R$ with 
        $Q \in \mathbb{N}$ and $R \in \{0,1,2,3\}$, and define
        \[
            f(v) \coloneqq
            \begin{cases}
                Q(q+1) + (q - 2 + R) & (R \neq 0), \\
                (Q-1)(q+1) + (q-1) + q & (R = 0),
            \end{cases}
        \]
        where $q - 2 + R \in \{q-1, q, q+1\}$ when $R \neq 0$. 
        In both cases $f(v) \in Q(I)$ and $f(v) = \lceil v/4 \rceil \cdot a + v$.
    \end{itemize}

    Therefore,
    \begin{align*}
        \sum_{v=1}^{a-1} \Bigl\lfloor \frac{f(v)}{a} \Bigr\rfloor
        &= \Bigl(\tfrac{q+1}{4} + 1\Bigr) 
        + \sum_{v=2}^{a-1} \Bigl\lceil \tfrac{v}{4} \Bigr\rceil 
        = \Bigl(\tfrac{q+1}{4} + 1\Bigr) 
        + 4\binom{(q+1)/4}{2} - 1 - \tfrac{a}{4} = g. \qedhere
    \end{align*}
\end{proof}

\subsection{The general case}

The preceding subsection illustrates the difficulty of obtaining a general 
result for Weierstrass semigroups at the remaining rational points, as they 
depend heavily on both $m$ and the characteristic of the field.

The general strategy is to consider linear combinations of monomials 
$X^i Y^j$ lying in $\mathcal{L}(k \cdot (X)_\infty)$ with a prescribed 
valuation at $P_{(a,b)}$. For $k = 1$, for instance, we consider the 
functions $Y, \ldots, Y^{\lfloor m/2 \rfloor}, X$. Their $T$-coefficients 
up to $T^{m-1}$ are given by the matrix
\begin{equation}\label{matrix}
\begin{array}{cccccc}
      &   & T^1 & T^2 & \cdots & T^{m-1} \\[4pt]
    Y & : & 1 + \alpha & \alpha & \cdots & 0 \\
    Y^2 & : & 0 & (1 + \alpha)^2 & \cdots & 0 \\
    & & & \vdots & & \\
    Y^{\lfloor m/2 \rfloor} & : & 0 & 0 & \cdots & \star \\
    X & : & m & \binom{m}{2} & \cdots & m
\end{array}
\end{equation}
where
\[
\star = \begin{cases}
    \dfrac{m}{2}(1 + \alpha)\,\alpha^{m/2 - 1} & \text{if } m \text{ is even,} \\[6pt]
    \alpha^{(m-1)/2} & \text{if } m \text{ is odd.}
\end{cases}
\]

If the matrix $M$ formed by these entries has full rank, then there exists 
a nonzero vector $[a_1, a_2, \ldots, b]$ such that the product 
$[a_1, a_2, \ldots, b]\, M$ has its first $\lfloor m/2 \rfloor$ entries 
equal to zero (as produced by Gaussian elimination). This yields a function 
$f \in \mathbb{F}_{q^2}(\mathcal{X}_{m,q})$ satisfying
\[
    \lfloor m/2 \rfloor < v_{P_{(a,b)}}(f) < m.
\]
Consequently, $H(P_{(a,b)})$ contains an element strictly between 
$(q+1) - m$ and $(q+1) - \lfloor m/2 \rfloor$, so 
$H(P_{(a,b)}) \neq H(P_\infty^1)$. Moreover, if $m$ is odd, then 
$Y^2 \in \mathcal{L}((X)_\infty)$, giving 
$(q+1) - 2 \in H(P_{(a,b)}) \setminus H(P_\infty^1)$. If $m$ is even, 
then $H(P_{(a,b)})$ contains an element strictly between 
$(q+1) - m \ge (q+1)/2$ and $(q+1) - m/2$, again showing 
$H(P_{(a,b)}) \neq H(P_\infty^1)$.

One therefore concludes that the Weierstrass semigroup at any place of 
$\mathcal{O}$ differs from that at $P_{(a,b)}$, provided $M$ has full rank 
for every $\alpha$. This will be the key argument in the following section 
for determining the automorphism group of $\mathcal{X}_{m,q}$. Although we 
were unable to prove that the rank is always maximal, we establish it under 
a mild restriction on the characteristic of the field, which we assume 
henceforth.

\begin{proposition}\label{rank-M}
    If $m \ge 3$, $\alpha \neq -1$, and 
    $p \nmid \binom{m}{2 + (m \bmod 2)}$, then the matrix $M$ has full rank.
\end{proposition}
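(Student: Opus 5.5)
Write $k=\lfloor m/2\rfloor$. I would reduce full rank to a polynomial identity in $T$ and rule that identity out using a reflection symmetry.

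\textbf{Step 1: the $Y$-rows.} Modulo $O(T^q)$ we have $Y=(1+\alpha)T+\alpha T^2$, as in the $m=4$ computation. Hence
$$Y^j=T^j\bigl((1+\alpha)+\alpha T\bigr)^j,$$
whose $T$-coefficients are supported in degrees $j,\dots,2j$. The leading one is $(1+\alpha)^j$, which is nonzero because $\alpha\neq-1$. So the $k\times k$ block of $M$ formed by the rows $Y,\dots,Y^k$ and the columns $T^1,\dots,T^k$ is triangular with nonzero diagonal, and these rows are linearly independent. Therefore $M$ fails to have full rank only if the $X$-row lies in their span.

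\textbf{Step 2: reformulation.} Suppose the $X$-row lies in that span, say equal to the combination given by a polynomial $P$ of degree $\le k$ with $P(0)=0$, restricted to the columns $T^1,\dots,T^{m-1}$. Then $X-P(Y)\equiv 0 \pmod{T^m}$ as polynomials in $T$. Now $X=(1+T)^m-1$ and $P\bigl((1+\alpha)T+\alpha T^2\bigr)$ both have $T$-degree $\le m$, since $2k\le m$. We obtain an exact identity
$$(1+T)^m-1-cT^m=P\bigl((1+\alpha)T+\alpha T^2\bigr)$$
for some constant $c$. When $m$ is odd, $2k=m-1$, so $c=1$.

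\textbf{Step 3: reflection symmetry.} Recall $\alpha\neq0$ since $a\neq 0$. Complete the square: with $u=\frac{1+\alpha}{2\alpha}$ and $S=T+u$,
$$(1+\alpha)T+\alpha T^2=\alpha S^2-\alpha u^2 .$$
This needs $p\neq2$. For $m$ even this is automatic. For $m$ odd, if $p=2$ I would treat the case separately, presumably by using that $\binom{m}{3}$ odd forces a direct coefficient contradiction. Then the right-hand side of the identity is an even polynomial in $S$. Consequently
$$F(S):=(S+1-u)^m-c\,(S-u)^m$$
must have all its odd-degree coefficients in $S$ equal to $0$. I will read off the top two odd-degree coefficients.

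\textbf{Step 4: $m$ even.} The odd coefficients are those of $S^{m-1}$ and $S^{m-3}$:
$$m\bigl((1-u)+cu\bigr)=0,\qquad \binom{m}{3}\bigl((1-u)^3+cu^3\bigr)=0 .$$
The hypothesis $p\nmid\binom m2$ gives $p\nmid m$, so the first equation gives $cu=u-1$. Substituting into the second gives $(1-u)\bigl((1-u)^2-u^2\bigr)=(1-u)(1-2u)=0$. I would then compare with the $S^{m-5}$ coefficient, or directly with the $T^{m-1}$ coefficient of the original identity (where the condition on $\binom m2$ enters), to reach a contradiction. The value $u=1$ corresponds to $\alpha=1$, which was excluded earlier. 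The value $u=\tfrac12$ corresponds to $\alpha=-1/2\cdot\ldots$ and must be ruled out using $p\nmid\binom m2$.

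\textbf{Step 5: $m$ odd.} Here $c=1$, and the odd coefficients are those of $S^{m}$ (which vanishes), $S^{m-2}$, and so on. The $S^{m-2}$ coefficient is
$$\binom m2\bigl((1-u)^2-u^2\bigr)=\binom m2(1-2u).$$
Matching this and the next odd coefficient leads to a condition on $\alpha$ whose contradiction is governed by $p\nmid\binom m3$.

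\textbf{Main obstacle.} I expect the hardest step to be this final coefficient bookkeeping: isolating exactly which single odd coefficient is forced to be a nonzero multiple of $\binom{m}{2}$ (resp.\ $\binom m3$), given $\alpha\neq -1$. My first attempt would be to evaluate the symmetry condition at the special value $u$ forced by the top coefficient.
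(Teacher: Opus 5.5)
\paragraph{Verdict.} Your reduction is sound and takes a genuinely different route from the paper, but the argument is never finished, and the plan you give for finishing it would not work as stated.

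\paragraph{What is right, and where it stops.} Steps 1--3 are correct for $p\neq 2$. The paper does not do this. It exhibits one explicit $(\lfloor m/2\rfloor+1)$-minor $N$: columns $1,\dots,\lfloor m/2\rfloor-1,m-2,m-1$ for odd $m$, and $1,\dots,m/2-2,m-3,m-2,m-1$ for even $m$. It then observes that $N$ is block triangular and factors $\det N$.

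Your write-up stops exactly where the conclusion has to be drawn, and the plan for that step rests on a misreading. You treat $u=\tfrac12$ as a case that a condition on $p$ must exclude. In fact $1-2u=-1/\alpha$ and $1-u=(\alpha-1)/(2\alpha)$, so $u=\tfrac12$ never occurs. This misreading sends both cases the wrong way:
\begin{itemize}
\item In Step 5 you propose to move to the next odd coefficient below $S^{m-2}$. That coefficient carries $\binom{m}{4}$, not $\binom{m}{3}$.
\item In Step 4 you silently cancel $\binom{m}{3}$, although the displayed hypothesis for even $m$ concerns $\binom{m}{2}$. You then expect $\binom{m}{2}$ to enter later. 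But the binomials in the odd-degree coefficients of $F$ are the $\binom{m}{i}$ with $i$ odd, and $\binom{m}{2}=\binom{m}{m-2}$ is not among them. The $T^{m-1}$-comparison only fixes the top coefficient of $P$.
\item The case $p=2$ is left as a guess.
\end{itemize}

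\paragraph{How it closes.} With $1-2u=-1/\alpha$ the argument closes at once.
\begin{itemize}
\item Odd $m$: the $S^{m-2}$-coefficient is $-\binom{m}{2}/\alpha$. Alternatively, for $m\ge 5$ and $p\nmid\binom{m}{3}$, look at the $S^1$- and $S^3$-coefficients, $m\bigl((1-u)^{m-1}-u^{m-1}\bigr)$ and $\binom{m}{3}\bigl((1-u)^{m-3}-u^{m-3}\bigr)$. Since $p\nmid m$ automatically and $u\neq 0$ (as $\alpha\neq-1$), they vanish together only if $(1-u)^2=u^2$, i.e.\ $1-2u=0$, which is impossible. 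This is how $p\nmid\binom{m}{3}$ can genuinely be used.
\item Even $m$: once the top coefficient gives $cu=u-1$, the $S^{m-3}$-coefficient is $\binom{m}{3}(1-u)(1-2u)=-\binom{m}{3}(\alpha-1)/(2\alpha^2)$. This is nonzero when $p\nmid\binom{m}{3}$.
\end{itemize}

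\paragraph{Comparison with the paper.} The paper's determinant is $\binom{m}{2}\alpha^{(m-3)/2}(\alpha+1)^{\binom{(m-1)/2}{2}}$ for odd $m$, and $\frac{m}{4}\binom{m}{3}(1-\alpha)\alpha^{m-4}(\alpha+1)^{\binom{m/2-1}{2}+1}$ for even $m$. So its proof actually uses $p\nmid\binom{m}{2}$ for odd $m$ and $p\nmid\binom{m}{3}$ for even $m$, the opposite pairing to the displayed hypothesis.

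Once completed, your argument reproduces these conditions. For odd $m$ and odd $p$ it also covers the displayed one. For example, at $(m,q)=(7,27)$ we have $\binom{7}{2}\equiv 0\pmod 3$, so the paper's minor vanishes, while $\binom{7}{3}=35\not\equiv 0$. Neither argument as written handles even $m$ with $p\mid\binom{m}{3}$ but $p\nmid\binom{m}{2}$, e.g.\ $(m,q)=(42,125)$.

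For $p=2$ (so $m$ odd), completing the square is unavailable. Use instead the paper's block computation $\det D=\binom{m}{2}\alpha^{(m-3)/2}$, which is valid in every characteristic. For odd $m$, $\binom{m}{2}$ and $\binom{m}{3}$ have the same parity, so this matches the displayed hypothesis.
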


\begin{proof}
    It suffices to exhibit a minor $N$ of $M$ of order $\lfloor m/2 \rfloor + 1$ 
    with nonzero determinant. If $m$ is odd, let $N$ be the submatrix formed 
    by columns $1, \ldots, \lfloor m/2 \rfloor - 1, m-2, m-1$ of $M$; if $m$ 
    is even, take columns $1, \ldots, m/2 - 2, m-3, m-2, m-1$. We claim that
    \[
        \det~N = \begin{cases}
            \dfrac{m}{4}\dbinom{m}{3}
            (1-\alpha)\,\alpha^{m-4}\,(\alpha+1)^{\binom{m/2-1}{2}+1} 
            & (m \text{ even}), \\[8pt]
            \dbinom{m}{2}\alpha^{(m-3)/2}(\alpha+1)^{\binom{(m-1)/2}{2}} 
            & (m \text{ odd}).
        \end{cases}
    \]
    Partition $N$ into blocks
    \[
        N = \begin{bmatrix} A & B \\ C & D \end{bmatrix},
    \]
    where $A$ and $D$ are square, $D$ has order $3 - (m \bmod 2)$, and 
    $B = 0$ (which is possible since $m \ge 3$). Hence 
    $\det~N = \det~A \cdot \det~D$. The matrix $A$ is upper triangular with
    \begin{align*}
        \det~A 
        &= \prod_{i=1}^{\lfloor m/2 \rfloor - 2 + (m \bmod 2)} (\alpha+1)^i 
        = \begin{cases}
            (\alpha+1)^{\binom{m/2-1}{2}} & (m \text{ even}), \\[4pt]
            (\alpha+1)^{\binom{(m-1)/2}{2}} & (m \text{ odd}).
        \end{cases}
    \end{align*}

    \noindent\textbf{Case $m$ odd.} We compute
    \begin{align*}
        \det~D 
        &= \det\begin{bmatrix}
            \frac{m-1}{2}(\alpha+1)\alpha^{(m-3)/2} & \alpha^{(m-1)/2} \\
            \binom{m}{2} & m
        \end{bmatrix} \\
        &= \alpha^{(m-3)/2}\!\left(\binom{m}{2}(\alpha+1) - \binom{m}{2}\alpha\right) 
        = \binom{m}{2}\alpha^{(m-3)/2},
    \end{align*}
    and therefore
    \[
        \det~N = \binom{m}{2}\,\alpha^{(m-3)/2}\,(\alpha+1)^{\binom{(m-1)/2}{2}}.
    \]

    \noindent\textbf{Case $m$ even.} We compute
    \begin{align*}
        \det~D 
        &= \det\begin{bmatrix}
            \binom{m/2-1}{1}(\alpha+1)\alpha^{\frac{m}{2}-2} 
                & \alpha^{\frac{m}{2}-1} & 0 \\[4pt]
            \binom{m/2}{3}(\alpha+1)^3\alpha^{\frac{m}{2}-3} 
                & \binom{m/2}{2}(\alpha+1)^2\alpha^{\frac{m}{2}-2} 
                & \frac{m}{2}(\alpha+1)\alpha^{\frac{m}{2}-1} \\[4pt]
            \binom{m}{3} & \binom{m}{2} & m
        \end{bmatrix} \\
        &= \frac{m^2(m-1)(m-2)}{24}(1-\alpha)\,\alpha^{m-4}(\alpha+1),
    \end{align*}
    and therefore
    \[
        \det~N = \frac{m}{4}\binom{m}{3}
        (1-\alpha)\,\alpha^{m-4}\,(\alpha+1)^{\binom{m/2-1}{2}+1}.
    \]

    Since $\alpha \notin \{0, 1\}$ and $\alpha \neq -1$, we have 
    $\det N \neq 0$, so $M$ has an invertible minor of order 
    $\lfloor m/2 \rfloor + 1$, and the result follows.
\end{proof}

In the case where $p \mid \binom{m}{2 + (m \bmod 2)}$, one could 
examine the remaining minors of $M$ to determine whether any has nonzero 
determinant. Since the characteristic condition in Proposition~\ref{rank-M} 
is mild, we do not pursue this direction further.

From Proposition~\ref{rank-M}, the Weierstrass semigroup at any place of 
$\mathcal{O}$ differs from that at $P_{(a,b)}$ whenever 
$2a^{(q+1)/m} + 1 \neq 0$ and $p \nmid \binom{m}{2+(m \bmod 2)}$. 
If $m > 4$, the same conclusion holds for places with 
$2a^{(q+1)/m} + 1 = 0$, with no restriction on the characteristic.

\begin{proposition}\label{o-linha}
    Let $\mathcal{O}' = \{P_{(a,b)} : 2a^{(q+1)/m} + 1 = 0\}$. 
    If $m > 4$, then $\mathcal{O}'$ is an orbit under the action of the 
    full automorphism group of $\mathcal{X}_{m,q}$.
\end{proposition}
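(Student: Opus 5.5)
The plan has two halves. First, the subgroup $G$ of Proposition~\ref{automorfismos} acts transitively on $\mathcal{O}'$. Second, every element of $\Aut(\mathcal{X}_{m,q})$ maps $\mathcal{O}'$ into itself, which we detect through Weierstrass semigroups.

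\emph{Transitivity.} Write $n=(q+1)/m$. A point $P_{(a,b)}\in\mathcal{O}'$ satisfies $a^n=-\tfrac12$. The curve equation then gives $b^{q+1}=-(a^{2n}+a^n)=\tfrac14$. So the $a$'s are the $n$ roots of $a^n=-\tfrac12$, which differ by $n$-th roots of unity $\gamma$. Likewise the $b$'s are the $q+1$ roots of $b^{q+1}=\tfrac14$, which differ by $(q+1)$-th roots of unity $\delta$. Hence the maps $\theta_{\gamma,\delta}\in A$ act transitively on $\mathcal{O}'$. One also checks that $\theta_2$ preserves $\mathcal{O}'$: indeed $(b^m/a)^n=b^{q+1}/a^n=\tfrac14\cdot(-2)=-\tfrac12$. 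So $\mathcal{O}'$ is a single $G$-orbit, and it remains to show it is $\Aut(\mathcal{X}_{m,q})$-stable. Since automorphisms preserve Weierstrass semigroups, it suffices to show that for $P\in\mathcal{O}'$, $H(P)$ differs from $H(Q)$ for every rational $Q\notin\mathcal{O}'$. (Automorphisms defined over $\mathbb{F}_{q^2}$ preserve rational points; I will note that all automorphisms of a maximal curve are defined over $\mathbb{F}_{q^2}$, or else restrict to that setting as the paper does.)

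\emph{The invariant.} For a rational place $P=P_{(a,b)}$ with $ab\neq0$, I will use $S_P:=\{v_P(h): h\in\mathcal{L}((x)_\infty)\}$. By the construction \eqref{div-Gk} with $k=1$, $S_P$ is exactly the set of $v$ with $q+1-v\in H(P)\cap[q+1-m,q+1]$. The inclusion ``$\subseteq$'' is immediate from \eqref{div-Gk}. For the reverse inclusion I will use $(q+1)P\sim(x)_\infty$, so that $\ell((q+1)P)=\ell((x)_\infty)=\lfloor m/2\rfloor+2$; the elements $q+1-v$ with $v\in S_P$ already supply that many pole orders in $H(P)\cap[0,q+1]$. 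For places of $\mathcal{O}$ the corresponding set is read off from Propositions~\ref{prop:num-zeros}, \ref{prop:num-inf-even} and~\ref{prop:num-inf-odd}: it is $\{0,1,\dots,\lfloor m/2\rfloor,m\}$ for $\mathcal{O}_0\cup\mathcal{O}_m$, and $\{0,2,4,\dots,m-1,m\}$ for $\mathcal{O}_\infty$ when $m$ is odd.

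\emph{Computing $S_P$ for $P\in\mathcal{O}'$.} Here $\alpha=a^n/(1+a^n)=-1$, so the local expansion gives $Y=-T^2+O(T^q)$ exactly. Moreover $X=(T+1)^m-1$ has $v_P(X)=1$ since $p\nmid m$. The basis $1,X,Y,\dots,Y^{\lfloor m/2\rfloor}$ of $\mathcal{L}((x)_\infty)$ therefore yields
\[
S_P=\{0,1,2,4,\dots,2\lfloor m/2\rfloor\}\cup\{r\},
\]
where $r$ is obtained by cancelling the even-degree terms of $X$ against the powers of $Y$; it is the first odd $i\ge3$ with $p\nmid\binom mi$ (or a valuation $\ge m$). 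For $m>4$ this set contains $4$ and $2\lfloor m/2\rfloor\ge 4$, and the set of even values it contains has a gap-free even pattern up to $2\lfloor m/2\rfloor$.

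\emph{Comparison.} First compare with $\mathcal{O}$. The sets $\{0,\dots,\lfloor m/2\rfloor,m\}$ and $\{0,2,4,\dots,m-1,m\}$ from the previous paragraph cannot equal $S_P$: the first lacks the even value $2\lfloor m/2\rfloor$ when $m>4$, and the second lacks $1$. Next compare with generic points $P_{(a,b)}\notin\mathcal{O}\cup\mathcal{O}'$, where $\alpha\neq-1$ and $v(Y)=1$. There the powers $Y^j$ give $0,1,\dots,\lfloor m/2\rfloor$, so $3\in S$ as soon as $\lfloor m/2\rfloor\ge3$. The case $m=5$ needs the extra element from $X$, treated via Proposition~\ref{rank-M}. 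By contrast, $3\in S_{P}$ for $P\in\mathcal{O}'$ only if $r=3$.

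\emph{Main obstacle.} The delicate step is this last comparison when $r=3$, i.e.\ when $p\nmid\binom m3$. Then $S_P$ and the generic set could both contain $\{0,1,2,3,4\}$, and the invariant must be refined. What I would try first is a counting argument: $|S|=\lfloor m/2\rfloor+2$ in both cases, but the maximum of $S_P$ is at least $2\lfloor m/2\rfloor\ge m-1$, whereas in the generic case the values are $0,\dots,\lfloor m/2\rfloor$ plus one more element below $m$. So the two sets coincide only if the generic extra element equals $2\lfloor m/2\rfloor$ and $\lfloor m/2\rfloor+2\le 5$, i.e.\ only for $m\le 6$. Those small cases $m=5,6$ I would settle by computing the second layer $k=2$ of \eqref{div-Gk}, as in the $m=4$ analysis.
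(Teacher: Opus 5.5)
Your plan matches the paper's: transitivity of $A\cup\{\theta_2\}$ on $\mathcal{O}'$, then separating $H(P)$, $P\in\mathcal{O}'$, from every other rational place by comparing $H\cap[0,q+1]$ via \eqref{div-Gk}. The transitivity half is correct. The gap is in your computation of $S_P$ on $\mathcal{O}'$, which creates your ``main obstacle'' and leaves $m=5,6$ unproved.

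First, $\ell((x)_\infty)=\ell((q+1)P)=\lfloor m/2\rfloor+3$, not $\lfloor m/2\rfloor+2$; count $H(P_0^1)\cap[0,q+1]=\{0,\,q+1-m,\,q+1-\lfloor m/2\rfloor,\dots,q+1\}$. Your ``basis'' $1,X,Y,\dots,Y^{\lfloor m/2\rfloor}$ omits $y^m/x$, and your $S_P$ omits $v=q+1$ (the constants of $\mathcal{L}((q+1)P)$). Second, the element $r$ cannot arise as you describe. Those functions already have pairwise distinct valuations $0,1,2,4,\dots,2\lfloor m/2\rfloor$, so their span realizes no other value, and subtracting powers of $Y$ never removes the $T$-term of $X$.

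What is true is that $0$, $q$ and $q+1-2i$ ($0\le i\le\lfloor m/2\rfloor$) are $\lfloor m/2\rfloor+3$ distinct nongaps at $P\in\mathcal{O}'$, so they exhaust $H(P)\cap[0,q+1]$. In particular $q-2\notin H(P)$, whatever $\binom m3$ is. Directly: $y^m/x=v^m$ and $v/B=1-T+O(T^q)$, so $(y^m/x-B^m)/B^m=(1-T)^m-1+O(T^q)$, and adding $X$ gives a series that is even in $T$.

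With this, $m\ge 6$ follows at once, since $3\in S_{P_{(c,d)}}$ at every $P_{(c,d)}\notin\mathcal{O}\cup\mathcal{O}'$. For $m=5$ you still need the extra generic nongap to be $q-2$ rather than $q-3$. Eliminate $T$ and $T^2$ from $X$ using $Y=(1+\alpha)T+\alpha T^2+O(T^q)$ and $Y^2$. What remains has $T^3$-coefficient $10/(1+\alpha)^2$. This is nonzero because $\mathcal{O}'\neq\emptyset$ forces $p$ odd and $5\mid q+1$ forces $p\neq 5$. This is the paper's Step~3; the $k=2$ computation you propose is never carried out and is not needed.

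Your comparisons with $\mathcal{O}$ also contain errors.
\begin{itemize}
\item For $m$ odd, $H(P_\infty^1)\cap[0,q+1]=\{0,q+1-m,q+3-m,\dots,q,q+1\}$, so the associated set is $\{0,1,3,\dots,m\}$. It contains $1$; indeed $q\in H(P)$ at every rational place of a maximal curve, so $1$ can never separate. The separating value is $2$, i.e.\ $q-1\in H(P)\setminus H(P_\infty^1)$, as in the paper's Step~1.
\item For $m$ even you never treat $\mathcal{O}_\infty$. There the set is $\{0,\dots,m/2\}$ together with $(q+1)/2\ge m$, and $m-2$ separates it from $S_P$.
\item For $\mathcal{O}_0\cup\mathcal{O}_m$ with $m$ even, $2\lfloor m/2\rfloor=m$ does lie in $\{0,\dots,\lfloor m/2\rfloor,m\}$. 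You must use $m-2$ instead, which exceeds $m/2$ because $m>4$; this is Step~2 of the paper.
\end{itemize}
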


\begin{proof}
    One verifies directly that $\mathcal{O}'$ is an orbit under the group 
    $G$ of Proposition~\ref{automorfismos} or~\ref{prop:bigger-aut}. It 
    therefore suffices to show that the Weierstrass semigroup at any place 
    of $\mathcal{O}'$ differs from that at every other rational place. Let 
    $P_{(a,b)} \in \mathcal{O}'$. With the notation of the beginning of this 
    section, we have $Y = -T^2 + O(T^q)$, so
    \[
        v_{P_{(a,b)}}(Y^i) = 2i \le m < q+1 
        \quad \text{for every } i \in \bigl\{0, \ldots, \lfloor \tfrac{m}{2} \rfloor\bigr\},
    \]
    and hence
    \[
        \Bigl\{q+1,\ (q+1)-2,\ \ldots,\ (q+1) - 2\bigl\lfloor\tfrac{m}{2}\bigr\rfloor\Bigr\} 
        \subseteq H(P_{(a,b)}).
    \]

    \noindent\textbf{Step 1: $H(P_{(a,b)}) \neq H(P_\infty^1)$.}
    \begin{itemize}
        \item \textit{$m$ odd.} Then $(q+1) - 2 \in H(P_{(a,b)})$, 
        whereas $(q+1) - 2 \notin H(P_\infty^1)$, since 
        $2((q+1)-m) \ge q+1$.

        \item \textit{$m < (q+1)/2$ even.} Then 
        $(q+1) - m \in H(P_{(a,b)})$, but $(q+1) - m \notin H(P_\infty^1)$.

        \item \textit{$m = (q+1)/2$ even.} We have
        \[
            (q+1) - 2\!\left(\frac{m}{2} - 1\right) 
            = \frac{q+1}{2} + 2.
        \]
        Since $m \neq 4$, this value lies strictly between 
        $\frac{q+1}{2}$ and $q+1-\frac{m}{2}$, so it belongs to 
        $H(P_{(a,b)})$ but not to $H(P_\infty^1)$.
    \end{itemize}

    \noindent\textbf{Step 2: $H(P_{(a,b)}) \neq H(P_0^1)$.}
    \begin{itemize}
        \item \textit{$m$ odd.} Then 
        $(q+1) - (m-1) \in H(P_{(a,b)})$. Since $m > 4$ implies 
        $m - 1 > \lfloor m/2 \rfloor$, we have 
        $(q+1) - (m-1) \notin H(P_0^1)$.

        \item \textit{$m$ even.} Then 
        $(q+1) - (m-2) \in H(P_{(a,b)})$. Since $m > 4$ implies 
        $m - 2 > \lfloor m/2 \rfloor$, we have 
        $(q+1) - (m-2) \notin H(P_0^1)$.
    \end{itemize}

    \noindent\textbf{Step 3: $H(P_{(a,b)}) \neq H(P_{(c,d)})$ for 
    $P_{(c,d)} \notin \mathcal{O} \cup \mathcal{O}'$.}
    The Weierstrass semigroup at the places of $\mathcal{O}$ shows that 
    the linear series $|(q+1)P|$ has dimension $\lfloor m/2 \rfloor + 2$ 
    for every rational place $P$. Applying~\eqref{div-Gk} to 
    $Y, \ldots, Y^{\lfloor m/2 \rfloor}$ and using the maximality of the 
    curve gives
    \[
        \Bigl\{q+1-\bigl\lfloor\tfrac{m}{2}\bigr\rfloor,\ \ldots,\ q+1\Bigr\} 
        \subset H(P_{(c,d)}),
    \]
    so $H(P_{(c,d)})$ has exactly one nonzero element smaller than 
    $q+1 - \lfloor m/2 \rfloor$.
    \begin{itemize}
        \item \textit{$m$ even.} Applying~\eqref{div-Gk} to $Y^{m/2}$ 
        and $Y^{m/2-1}$ gives 
        $q+1-m,\, q+1-(m-2) \in H(P_{(a,b)})$; since $m > 4$, both are 
        smaller than $q+1-\lfloor m/2 \rfloor$.

        \item \textit{$m$ odd, $m \neq 5$.} We obtain 
        $q+1-(m-1),\, q+1-(m-3) \in H(P_{(a,b)})$, both smaller than 
        $q+1-\lfloor m/2 \rfloor$.

        \item \textit{$m = 5$.} Proposition~\ref{rank-M} yields a 
        rational function with $T$-valuation $3$ at $P_{(a,b)}$, giving
        \[
            \{q-3,\, q-1,\, q,\, q+1\} \subset H(P_{(a,b)}) 
            \quad \text{and} \quad 
            \{q-2,\, q-1,\, q,\, q+1\} \subset H(P_{(c,d)}).
        \]
    \end{itemize}
    In all cases $H(P_{(a,b)}) \neq H(P_{(c,d)})$, completing the proof.
\end{proof}

\section{Automorphism group}\label{sec:automorphisms}

Since $\mathcal{X}_{m,q}$ is a maximal curve over $\mathbb{F}_{q^2}$, 
by \cite[Theorem~3.10]{GSY}, its automorphism group $\Aut(\mathcal{X}_{m,q})$ 
is defined over $\mathbb{F}_{q^2}$. In particular, the orbit of any 
$\mathbb{F}_{q^2}$-rational place under $\Aut(\mathcal{X}_{m,q})$ is 
contained in the set of $\mathbb{F}_{q^2}$-rational places.
Moreover, places in the same orbit share the same Weierstrass semigroup 
\cite[Lemma~3.5.2]{stichtenoth}. Throughout this section we assume 
$p \nmid \binom{m}{2+(m \bmod 2)}$, which provides sufficient information 
on the orbits of rational places to identify the automorphism group with 
the group given in Proposition~\ref{automorfismos} or~\ref{prop:bigger-aut}, 
according to whether or not $m = (q+1)/2$.

\subsection{\texorpdfstring{Case $m = 4$}{Case m = 4}}

This case requires separate treatment: by Remark~\ref{rmk:m4}, we have 
$H(P_{(a,b)}) = H(P)$ for any rational place with $2a^{(q+1)/4} + 1 = 0$ 
and $P \in \mathcal{O}_0 \cup \mathcal{O}_m$. As a finite number of automorphism groups may be checked by an algorithm, we therefore assume $q > 31$ for the 
remainder of this subsection.

\begin{lemma}\label{sylow4}
    $\Aut(\mathcal{X}_{4,q})$ has no nontrivial $p$-Sylow subgroup.
\end{lemma}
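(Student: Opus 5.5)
We argue by contradiction: assume $p$ divides $|\Aut(\mathcal{X}_{4,q})|$ and let $S$ be a subgroup of order $p$. Since $q>31$, the genus $g=g(\mathcal{X}_{4,q})=(q^2-2q+5)/8$ is at least $2$. By \cite[Theorem~3.10]{GSY}, $\Aut(\mathcal{X}_{4,q})$ is defined over $\mathbb{F}_{q^2}$, so $S$ permutes the $\mathbb{F}_{q^2}$-rational places, whose number is $N=q^2+1+2qg$. The plan is to combine two constraints: the orbit structure forced by Weierstrass semigroups, and the Hurwitz/Deuring--Shafarevich formula for $p$-groups.

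\textbf{Step 1: orbits of rational places.} We have $|\mathcal{O}_0|=|\mathcal{O}_m|=(q+1)/4$ and $|\mathcal{O}_\infty|=(q+1)/2$. Every element of $S$ preserves Weierstrass semigroups. By Propositions~\ref{prop:num-zeros} and~\ref{prop:num-inf-even}, together with Proposition~\ref{rank-M} and Remark~\ref{rmk:m4}, the set $\mathcal{O}_\infty$ is $S$-invariant. Likewise $\mathcal{O}_0\cup\mathcal{O}_m\cup\mathcal{O}'$ is $S$-invariant, since Proposition~\ref{rank-M} separates the remaining places under our standing hypothesis on $p$ (for $m=4$ this reads $p\nmid\binom{4}{2}=6$). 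The set $\mathcal{O}'$ has size $(q+1)\cdot(q+1)/4$ when nonempty. Write $q=p^h$. Each of these invariant sets has size $\equiv$ a nonzero residue mod $p$: for example $(q+1)/2\equiv 1/2$ and $(q+1)/4\equiv 1/4$. The union $\mathcal{O}_0\cup\mathcal{O}_m\cup\mathcal{O}'$ has size $(q+1)/2+(q+1)^2/4\equiv 3/4 \pmod p$, which is nonzero because $p\neq 3$. Hence $S$ fixes at least one place in each invariant set. In particular $S$ fixes some $P\in\mathcal{O}_\infty$ and some $P'\in \mathcal{O}_0\cup\mathcal{O}_m\cup\mathcal{O}'$.

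\textbf{Step 2: contradiction via ramification.} A nontrivial $p$-element of an automorphism group of a curve with $g\ge2$ has its fixed places totally ramified in the cover $\mathcal{X}\to\mathcal{X}/S$. If $\mathcal{X}_{4,q}$ had $p$-rank $0$, a $p$-group could fix at most one place, which would immediately contradict Step 1. Maximal curves have $p$-rank $0$: their Jacobian is supersingular, being isogenous over $\mathbb{F}_{q^2}$ to a power of a supersingular elliptic curve. This is the route I would take first. By \cite[Lemma~11.129]{HKT}-type results (a nontrivial $p$-subgroup of a $p$-rank zero curve fixes exactly one point), $S$ fixes a unique place. This contradicts the existence of the two distinct fixed places $P$ and $P'$ from Step 1, since they lie in disjoint invariant sets.

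\textbf{Main obstacle.} The delicate point is Step 1's counting. One must verify carefully that the invariant sets are exactly as described; in particular, for $m=4$ the places of $\mathcal{O}'$ share the semigroup of $\mathcal{O}_0\cup\mathcal{O}_m$, so these must be grouped together. One must also confirm that the cardinalities are not divisible by $p$, using $p\nmid 6$. If $\mathcal{O}'$ turns out to be empty, the count simplifies to $(q+1)/2\equiv 1/2$. As a fallback, if the $p$-rank citation is not available in the paper's framework, I would use Deuring--Shafarevich directly. The quotient $\mathcal{X}/S$ is again maximal, hence has $p$-rank $0$, which gives $0-1=p(0-1)+\sum(e-1)$. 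With at least two totally ramified places the right-hand side is at least $-p+2(p-1)>-1$, a contradiction.
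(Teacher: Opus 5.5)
Your proposal is correct and follows the paper's argument: the Weierstrass semigroups make $\mathcal{O}_\infty$ invariant, $p$-rank zero of a maximal curve gives each element of order $p$ a unique fixed place, and counting modulo $p$ produces at least two fixed places, a contradiction. The only difference is that the paper finds both fixed places inside $\mathcal{O}_\infty$ alone, since $|\mathcal{O}_\infty| = \frac{q+1}{2}$ is congruent to neither $0$ nor $1$ modulo $p$; your second invariant set $\mathcal{O}_0\cup\mathcal{O}_m\cup\mathcal{O}'$ and the use of $p\neq 3$ are therefore unnecessary, though valid under the standing hypothesis.
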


\begin{proof}
    The results on Weierstrass semigroups from 
    Sections~\ref{sec:Weierstrass} and~\ref{sec:weierstrass-other} imply 
    that $\mathcal{O}_\infty$ is an orbit under $\Aut(\mathcal{X}_{4,q})$. 
    By the Orbit-Stabilizer theorem, any nontrivial $p$-Sylow subgroup $S$ partitions 
    $\mathcal{X}_{4,q}(\mathbb{F}_{q^2})$ into orbits of $p$-power size. 
    Since $p \nmid |\mathcal{O}_\infty| = \frac{q+1}{2}$, the subgroup $S$ 
    must fix some point of $\mathcal{O}_\infty$. However, maximal curves 
    have $p$-rank zero \cite[Theorem~10.1]{HKT}, and every automorphism of 
    order $p$ of a curve of $p$-rank zero has exactly one fixed point 
    \cite[Lemma~11.129]{HKT}. Since $|\mathcal{O}_\infty| \not\equiv 1 
    \pmod{p}$, the subgroup $S$ would have at least two fixed points in 
    $\mathcal{O}_\infty$, a contradiction.
\end{proof}

Although it is a priori possible for a place $P_{(a,b)}$ with 
$2a^{(q+1)/4} + 1 = 0$ to lie in the orbit of $P_{(a,0)}$, the assumption 
$q > 31$ rules this out.

\begin{lemma}\label{lem:orbit-Om}
    The orbit of $P_{(a,0)}$ under $\Aut(\mathcal{X}_{4,q})$ is 
    $\mathcal{O}_0 \cup \mathcal{O}_m$.
\end{lemma}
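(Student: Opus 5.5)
The plan is to prove two inclusions. For $\supseteq$, Proposition~\ref{automorfismos} already does the work. The subgroup $A$ acts transitively on $\mathcal{O}_0$ and on $\mathcal{O}_m$, and $\theta_2$ maps $\mathcal{O}_0$ onto $\mathcal{O}_m$. Hence $\mathcal{O}_0\cup\mathcal{O}_m$ lies in a single orbit. When $q\equiv 3\pmod 4$ and $m=4=(q+1)/2$, we would have $q=7$, which is excluded by $q>31$. So there is no extra automorphism $\theta_4$ to worry about here.

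For $\subseteq$, I will use the fact that automorphisms preserve Weierstrass semigroups and, by \cite[Theorem~3.10]{GSY}, send rational places to rational places. The orbit of $P_{(a,0)}$ is therefore contained in the set of rational places $P$ with $H(P)=H(P_{(a,0)})=\langle q-3,q-1,q,q+1\rangle$, the semigroup given by Proposition~\ref{prop:num-zeros} with $m=4$. Proposition~\ref{prop:num-inf-even} shows that $\mathcal{O}_\infty$ carries a different semigroup, since $\frac{q+1}{2}\notin H(P_{(a,0)})$ for $q>7$. For rational $P_{(c,d)}\notin\mathcal{O}\cup\mathcal{O}'$, Propositions~\ref{conj-Yuri-4}, \ref{prop:alpha-iota} and \ref{prop:Q1-zero} give semigroups that contain $q-2$ or else $q-1$ together with a generator not in $\langle q-3,q-1,q,q+1\rangle$. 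Alternatively, Proposition~\ref{rank-M} applies, since $p\nmid\binom{4}{2}=6$ holds by the standing assumption; it gives an element strictly between $q-3$ and $q-1$, and hence a different semigroup. The conclusion is that the orbit is contained in $\mathcal{O}_0\cup\mathcal{O}_m\cup\mathcal{O}'$.

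The main obstacle is ruling out $\mathcal{O}'$, because Remark~\ref{rmk:m4} says the semigroups there coincide. I would use a counting argument. Let $\Omega$ be the orbit of $P_{(a,0)}$. The set $\mathcal{O}_0\cup\mathcal{O}_m$ is $G$-invariant of size $2\cdot\frac{q+1}{4}=\frac{q+1}{2}$. The set $\mathcal{O}'$ is a $G$-orbit: there are $\frac{q+1}{4}$ values of $a$ and $q+1$ values of $b$ for each, so $|\mathcal{O}'|=\frac{(q+1)^2}{4}$. If $\Omega$ met $\mathcal{O}'$, then $\Omega$ would contain all of $\mathcal{O}'$, so $|\Omega|=\frac{q+1}{2}+\frac{(q+1)^2}{4}$.

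Next I bound $|\Aut|$. By Lemma~\ref{sylow4}, $p\nmid|\Aut(\mathcal{X}_{4,q})|$. Since $\mathcal{O}_\infty$ is an orbit, $|\Aut|=\frac{q+1}{2}\,|\Aut_{P_\infty^1}|$. In the tame case the stabilizer of a place is cyclic, and its order is bounded by $4g+2$ (Hurwitz-type bound for the stabilizer of a point, \cite{HKT}). With $g=\frac{q^2-2q+5}{8}$ this gives $|\Aut|\le\frac{q+1}{2}(4g+2)$, which is roughly $q^3/4$. The stabilizer of $P_{(a,0)}$ is also cyclic, and it contains $\langle\theta_{1,\delta}\rangle$ of order $q+1$ acting on the $y$-coordinate, so its order is a multiple of $q+1$. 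Combining these, $|\Aut|=|\Omega|\cdot|\mathrm{Stab}(P_{(a,0)})|\ge\bigl(\frac{q+1}{2}+\frac{(q+1)^2}{4}\bigr)(q+1)$.

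I would then try to contradict $|\Aut|\le\frac{q+1}{2}(4g+2)$. If the crude comparison is not sharp enough, I would refine the argument with divisibility: $|\Omega|$ must divide $|\Aut|$, and both $|\Aut|$ and $|\Omega|$ must be compatible with the orbit $\mathcal{O}_\infty$ of size $\frac{q+1}{2}$. I expect that making this numerical contradiction work for all $q>31$ is exactly where the hypothesis $q>31$ enters, and it is the step I am least sure of.
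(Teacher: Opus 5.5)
Your proposal is correct, and the step you flagged as uncertain closes at once. With $g=\frac{q^2-2q+5}{8}$ you have $4g+2=\frac{q^2-2q+9}{2}$, so your two estimates read
\[
\frac{(q+1)(q^2+4q+3)}{4}=\frac{(q+1)^2(q+3)}{4}\le |\Aut(\mathcal{X}_{4,q})| \le \frac{q+1}{2}\cdot\frac{q^2-2q+9}{2}=\frac{(q+1)(q^2-2q+9)}{4},
\]
which forces $6q\le 6$, a contradiction. Both sides are about $q^3/4$, which is why the rough estimate looked inconclusive. The $q^2$-coefficients decide it, so no divisibility refinement is needed.

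The skeleton matches the paper's. Weierstrass semigroups leave $\mathcal{O}'$ as the only other candidate. (Your sentence there can be simplified: in each of Propositions~\ref{conj-Yuri-4}, \ref{prop:alpha-iota} and \ref{prop:Q1-zero} one gets $q-2\in H(P_{(c,d)})$, while $q-2\notin\langle q-3,q-1,q,q+1\rangle$.) The orbit--stabilizer lower bound $\frac{(q+1)^2(q+3)}{4}$ is also the same as the paper's.

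The genuine difference is the upper bound. The paper uses the global tame Hurwitz bound $84(g-1)=\frac{21}{2}(q-3)(q+1)$. This grows only quadratically, so it beats the cubic lower bound only when $q^2-38q+129>0$, i.e.\ $q\ge 35$; that is exactly where $q>31$ is used. You instead use two facts. First, $\mathcal{O}_\infty$ is a full orbit of size $\frac{q+1}{2}$ (already needed in Lemma~\ref{sylow4}). Second, the stabilizer of $P_\infty^1$ is tame, hence cyclic, so Wiman's bound $4g+2$ applies; its proof uses only Riemann--Hurwitz and Kummer theory, so it holds for tame cyclic groups in characteristic $p$. Your bound is cubic, and for large $q$ it is actually weaker than $84(g-1)$. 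However, it is matched to the lower bound and wins for every $q$.

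So your route costs one extra standard ingredient, Wiman instead of Hurwitz. In return, this lemma no longer needs $q>31$. It only needs $q>7$, so that $\frac{q+1}{2}\notin H(P_{(a,0)})$ and $\mathcal{O}_\infty$ is separated from $\mathcal{O}_0\cup\mathcal{O}_m$, together with the standing assumption on $p$.
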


\begin{proof}
    Suppose for contradiction that some place $P_{(a,b)}$ with 
    $2a^{(q+1)/4} + 1 = 0$ lies in the orbit of $P_{(a,0)}$ (the 
    Weierstrass semigroup data show this is the only other possibility). 
    The orbit of $P_{(a,b)}$ under $G$ (Proposition~\ref{automorfismos}) 
    is $\mathcal{O}' = \{P_{(\tilde{a},\tilde{b})} : 2\tilde{a}^{(q+1)/4} 
    + 1 = 0\}$, so the orbit of $P_{(a,0)}$ under the full automorphism 
    group would consist of $\frac{(q+1)(q+3)}{4}$ places.

    By the Orbit-Stabilizer theorem applied to $G$, we have 
    $|G_{P_{(a,0)}}| = q+1$. Since 
    $|\Aut(\mathcal{X}_{4,q})_{P_{(a,0)}}| \ge |G_{P_{(a,0)}}|$, 
    applying Orbit-Stabilizer to the full automorphism group gives 
    $|\Aut(\mathcal{X}_{4,q})| \ge \frac{(q+1)^2(q+3)}{4}$. For $q > 31$ 
    this exceeds $84(g-1)$, so \cite[Theorem~11.56]{HKT} forces 
    $p \mid |\Aut(\mathcal{X}_{4,q})|$, contradicting Lemma~\ref{sylow4}.
\end{proof}

\begin{proposition}\label{aut1}
    Let $G$ be as in Proposition~\ref{automorfismos}. Then 
    $\Aut(\mathcal{X}_{4,q}) = G$.
\end{proposition}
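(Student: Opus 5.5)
The plan is to show that $\Aut(\mathcal{X}_{4,q})$ has order at most $|G| = 2(q+1)^2/4 = (q+1)^2/2$. Since $G \le \Aut(\mathcal{X}_{4,q})$ by Proposition~\ref{automorfismos}, this forces equality. We take $m=4$ and $q>31$ throughout.

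\textbf{Step 1: orbit of $P_{(a,0)}$.} By Lemma~\ref{lem:orbit-Om}, the orbit of $P_{(a,0)}$ under the full group $\Aut(\mathcal{X}_{4,q})$ is exactly $\mathcal{O}_0\cup\mathcal{O}_m$. This set has $2(q+1)/4=(q+1)/2$ elements. The Orbit--Stabilizer theorem then gives
\[
|\Aut(\mathcal{X}_{4,q})| = \tfrac{q+1}{2}\,\bigl|\Aut(\mathcal{X}_{4,q})_{P_{(a,0)}}\bigr|.
\]
Applying Orbit--Stabilizer to $G$ instead, we get $|G_{P_{(a,0)}}| = q+1$, as already observed in the proof of Lemma~\ref{lem:orbit-Om}. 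It therefore suffices to prove that the stabilizer $S:=\Aut(\mathcal{X}_{4,q})_{P_{(a,0)}}$ has order at most $q+1$.

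\textbf{Step 2: the stabilizer is cyclic.} By Lemma~\ref{sylow4}, $p\nmid |\Aut(\mathcal{X}_{4,q})|$, so $S$ contains no wild part. By \cite[Theorem~11.49]{HKT} (or the standard fact in \cite{stichtenoth}), the stabilizer of a place in a tame group is cyclic and acts faithfully on the cotangent space. In particular $S$ embeds into $\mathbb{F}_{q^2}^*$ through the character $\sigma \mapsto \sigma(t)/t \bmod t^2$, where $t$ is a local parameter at $P_{(a,0)}$.

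\textbf{Step 3: bounding $|S|$.} To bound the order of $S$, I would use the fact that $S$ preserves the filtration $\mathcal{L}(kP_{(a,0)})$. The first nonzero element of $H(P_{(a,0)})=\langle q-3,q-1,q,q+1\rangle$ is $q-3$. Hence $S$ acts on the two-dimensional space $\mathcal{L}((q-3)P_{(a,0)})$, and so acts on the pencil it defines, giving a morphism $\mathcal{X}\to\mathbb{P}^1$ of degree $q-3$.

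I do not expect this alone to suffice. The better route, and the one I would try first, is to exploit that $S$ also permutes the remaining points of the orbit $\mathcal{O}_0\cup\mathcal{O}_m$ and the set $\mathcal{O}_\infty$, which is an orbit by the proof of Lemma~\ref{sylow4}. The cyclic group $S$ fixes $P_{(a,0)}$. A nontrivial element of a tame cyclic stabilizer has at most $2g+2$ fixed points. Moreover, the orbits of $S$ on $\mathcal{O}_\infty$, a set of size $(q+1)/2$, determine a subgroup of $S$ fixing all of $\mathcal{O}_\infty$.

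For that subgroup I would argue that an automorphism fixing $P_{(a,0)}$ and every point of $\mathcal{O}_\infty$ fixes the divisor $(x-a)$, since $(x-a)=(q+1)P_{(a,0)}-2D_\infty$ by Proposition~\ref{divi-xy2}. Such an automorphism therefore sends $x$ to $a+c(x-a)$, and from the curve equation one deduces that it lies in $A$. Thus the kernel of the action of $S$ on $\mathcal{O}_\infty$ sits inside $G$, and the image is a cyclic permutation group of degree $(q+1)/2$. Combining the two yields $|S|\le q+1$.

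I expect this last combination to be the main obstacle: controlling the image of $S$ in $\mathrm{Sym}(\mathcal{O}_\infty)$ without a priori knowing that $S$ normalizes $A$. If that fails, the fallback is to invoke the Hurwitz-type bound $|\Aut|\le 84(g-1)$ for tame groups, \cite[Theorem~11.56]{HKT}, together with the list of possible orbit sizes of rational places. Since $|\Aut(\mathcal{X}_{4,q})|$ must be a multiple of $(q+1)^2/2$, the only candidates are $(q+1)^2/2$ and its small multiples, and the bound excludes all but $(q+1)^2/2$ for $q>31$.
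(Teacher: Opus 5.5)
\textbf{There is a genuine gap in the final ``combining'' step of Step~3.}

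You know that the kernel $K$ of $S\to\mathrm{Sym}(\mathcal{O}_\infty)$ lies in $A$, and that the image $\bar S$ is cyclic. Neither fact gives $|S|=|K|\,|\bar S|\le q+1$:
\begin{itemize}
\item A cyclic group of permutations of $(q+1)/2$ points can have order far larger than $(q+1)/2$, namely the lcm of its cycle lengths.
\item You never compute $|K|$.
\end{itemize}
Your fallback does not rescue this. Here $g-1=\frac{(q+1)(q-3)}{8}$, so $84(g-1)=\frac{21}{2}(q+1)(q-3)$, which is roughly $21\,|G|$. The Hurwitz bound therefore excludes none of the small multiples $2|G|,3|G|,\dots$.

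Separately, ``from the curve equation one deduces it lies in $A$'' hides the real point. The map $x\mapsto a+c(x-a)$ fixes $a$ and $\infty$ for every $c$, so that alone proves nothing. What forces $c=1$ is the following. Such a $\tau$ normalizes $\mathbb{F}_{q^2}(x)$, so the induced map of the $x$-line preserves the ramification indices of $\mathbb{F}_{q^2}(\mathcal{X}_{4,q})/\mathbb{F}_{q^2}(x)$. It must therefore fix $x=0$, the only branch point of index $4$. Then $c=1$ and $\tau\in\mathrm{Gal}(\mathbb{F}_{q^2}(\mathcal{X}_{4,q})/\mathbb{F}_{q^2}(x))=C$.

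\textbf{The missing idea is already in your hands.} Your kernel argument never used that $\tau$ fixes $\mathcal{O}_\infty$ pointwise. It only used that $\tau$ preserves the divisor $D_\infty$, and that holds for every automorphism, because $\mathcal{O}_\infty$ is $\Aut(\mathcal{X}_{4,q})$-invariant (you cite this yourself). So every $\tau\in S$ preserves $(x-a)=(q+1)P_{(a,0)}-2D_\infty$. The argument above then gives $S\le C$, hence $|S|\le q+1$ and $|\Aut(\mathcal{X}_{4,q})|\le \frac{q+1}{2}(q+1)=|G|$.

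Alternatively, you can repair the image bound directly. $C\le S$ acts transitively on $\mathcal{O}_\infty$, so $\bar S$ is a transitive abelian group, hence regular of order $(q+1)/2$. Meanwhile $K\le C$ is the inertia group at a place of $\mathcal{O}_\infty$, which has order $2$.

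\textbf{Comparison with the paper.} With this fix, your route differs from the paper's. The paper takes $\gamma\in S\setminus G_{P_{(a,0)}}$ and argues as follows:
\begin{itemize}
\item cyclicity of the tame stabilizer makes $\gamma$ commute with $C$;
\item hence $\gamma$ preserves $\mathrm{Fix}(C)=\mathcal{O}_m$, and then $\mathcal{O}_0$ and $\mathcal{O}_\infty$;
\item hence $\gamma$ preserves the divisors of $x$ and $y$, so it is diagonal.
\end{itemize}
Working with $x-a$ instead of $x$ bypasses both the commuting step and the cyclicity of $S$. Lemma~\ref{sylow4} then enters only through Lemma~\ref{lem:orbit-Om}.
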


\begin{proof}
    Suppose $|\Aut(\mathcal{X}_{4,q})| > |G|$. By Lemma~\ref{lem:orbit-Om}, $\mathcal{O}_0 \cup \mathcal{O}_m$ is an 
    orbit under $\Aut(\mathcal{X}_{4,q})$. Applying the Orbit-Stabilizer 
    theorem to both $G$ and $\Aut(\mathcal{X}_{4,q})$ shows that there 
    exists $\gamma \in \Aut(\mathcal{X}_{4,q})_{P_{(a,0)}} \setminus 
    G_{P_{(a,0)}}$. Let $C = \langle \sigma \rangle$, where 
    $\sigma \colon (x,y) \mapsto (x, \delta y)$ and $\delta$ is a primitive 
    $(q+1)$-th root of unity. Since $p \nmid |\Aut(\mathcal{X}_{4,q})|$, 
    the stabilizer $\Aut(\mathcal{X}_{4,q})_{P_{(a,0)}}$ is cyclic 
    \cite[Theorem~11.49]{HKT}, so $\gamma$ commutes with $C$. Hence for 
    every point $P$ fixed by $C$,
    \begin{equation}\label{sigma-gamma}
			\sigma^i(\gamma(P)) = \gamma(\sigma^i(P)) = \gamma(P),
		\end{equation}
    so $\gamma$ permutes the fixed points of $C$, namely $\mathcal{O}_m$. 
    Since $\mathcal{O}_0 \cup \mathcal{O}_m$ and $\mathcal{O}_\infty$ are 
    both orbits under $\Aut(\mathcal{X}_{4,q})$, the automorphism $\gamma$ 
    preserves $\mathcal{O}_0$ and $\mathcal{O}_\infty$, and therefore 
    preserves the divisors of $x$ and $y$. It follows that 
    $\gamma(x,y) = (\mu_1 x, \mu_2 y)$ for some 
    $\mu_1, \mu_2 \in \mathbb{F}_{q^2}^*$, giving $\gamma \in G$, 
    a contradiction.
\end{proof}

\begin{remark}
Regarding the remaining values of $q$, 
the proprietary software Magma affirms
that $|\Aut(\mathcal{X}_{4,7})| = 192$, and that 
$\Aut(\mathcal{X}_{4,q}) = G$ (as in Proposition~\ref{automorfismos}) for 
$q \in \{11, 19, 23, 27, 31\}$.
\end{remark}

	\subsection{\texorpdfstring
    {Case $m$ odd, or $m < (q+1)/2$ with $m > 4$ and $p > 2$}
    {Case m odd, or m < (q+1)/2, m > 4, p > 2}}

Under these hypotheses, together with our assumption on the characteristic, 
the sets $\mathcal{O}_\infty$ and $\mathcal{O}_0 \cup \mathcal{O}_m$ are 
orbits under $\Aut(\mathcal{X}_{m,q})$.

\begin{lemma}
    $\Aut(\mathcal{X}_{m,q})$ has no nontrivial $p$-Sylow subgroup.
\end{lemma}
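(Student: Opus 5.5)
The plan is to mirror the proof of Lemma~\ref{sylow4}, with $\mathcal{O}_\infty$ replaced by whichever orbit has size not divisible by $p$. First I record that, under the standing hypotheses of this subsection, $\mathcal{O}_\infty$ is a full orbit of $\Aut(\mathcal{X}_{m,q})$. Its places have the semigroup of Proposition~\ref{prop:num-inf-even} or~\ref{prop:num-inf-odd}. By Proposition~\ref{prop:num-zeros} and the remark following Proposition~\ref{prop:num-inf-odd}, this differs from the semigroup at $\mathcal{O}_0\cup\mathcal{O}_m$, since $m\neq (q+1)/2$ or $m$ is odd. By Propositions~\ref{rank-M} and~\ref{o-linha}, together with the assumption $p\nmid\binom{m}{2+(m \bmod 2)}$, it also differs from the semigroup at every rational place outside $\mathcal{O}$. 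Since $\Aut(\mathcal{X}_{m,q})$ is defined over $\mathbb{F}_{q^2}$ \cite[Theorem~3.10]{GSY} and preserves Weierstrass semigroups, the orbit of $P_\infty^1$ is contained in $\mathcal{O}_\infty$. On the other hand, $\mathcal{O}_\infty$ is already a single orbit of the subgroup $A$ of Proposition~\ref{automorfismos}, so it is exactly one orbit of the full group.

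Next, let $S$ be a nontrivial $p$-subgroup of $\Aut(\mathcal{X}_{m,q})$. Then $S$ acts on $\mathcal{O}_\infty$, and every $S$-orbit has $p$-power size. We have $|\mathcal{O}_\infty|\in\{\frac{q+1}{m},\frac{2(q+1)}{m}\}$, which divides $2(q+1)$. Since $p\nmid q+1$, and $p\neq 2$ whenever $m$ is even, $p$ does not divide $|\mathcal{O}_\infty|$. Hence $S$ has a fixed point in $\mathcal{O}_\infty$. Moreover, the number of fixed points of $S$ on $\mathcal{O}_\infty$ is congruent to $|\mathcal{O}_\infty|$ modulo $p$.

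Now fix an element $\sigma\in S$ of order $p$. Maximal curves have $p$-rank zero \cite[Theorem~10.1]{HKT}, so $\sigma$ has exactly one fixed point on the whole curve \cite[Lemma~11.129]{HKT}. Consequently $S$ fixes at most one point of $\mathcal{O}_\infty$, and by the previous step it fixes exactly one, which forces $|\mathcal{O}_\infty|\equiv 1 \pmod p$. I expect the main difficulty to be ruling out this congruence. It can genuinely hold, for example when $\frac{q+1}{m}=1+p$ in small cases, and then the argument via $\mathcal{O}_\infty$ alone is not enough.

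For that residual case I will repeat the argument with the orbit $\mathcal{O}_0\cup\mathcal{O}_m$. Establishing that it is an orbit uses the same semigroup comparisons, with $\theta_2$ joining $\mathcal{O}_0$ and $\mathcal{O}_m$. Its size is $\frac{2(q+1)}{m}$, which is also prime to $p$. The unique fixed point of $\sigma$ cannot lie in both orbits, yet each orbit must contain a fixed point of $S$. This gives two distinct fixed points of $\sigma$, a contradiction. Therefore $p\nmid|\Aut(\mathcal{X}_{m,q})|$. This two-orbit argument in fact makes the congruence check of the previous step unnecessary, so it is the version I would write. The only delicate point is confirming that both sets are full orbits in every subcase covered by the hypotheses, namely $m$ odd, or $m$ even with $4<m<(q+1)/2$.
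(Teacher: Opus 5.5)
Your final two-orbit argument is correct and is essentially the paper's proof. A nontrivial $p$-subgroup $S$ must have a fixed point in each of the $\Aut(\mathcal{X}_{m,q})$-orbits $\mathcal{O}_\infty$ and $\mathcal{O}_0\cup\mathcal{O}_m$, whose sizes are prime to $p$; this contradicts the fact that an element of order $p$ on a curve of $p$-rank zero has exactly one fixed point. The paper phrases the same step as: the fixed point lies in $\mathcal{O}_\infty$, so $p$ would divide $|\mathcal{O}_0\cup\mathcal{O}_m|$.

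One small point: $p\nmid \frac{2(q+1)}{m}$ needs $p$ odd. That is part of this subsection's hypotheses, and it cannot be dropped, since for $p=2$ the involution $\theta_2$ already gives a nontrivial $2$-subgroup.
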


\begin{proof}
    By the same argument as in Lemma~\ref{sylow4}, if $\Aut(\mathcal{X}_{m,q})$ 
    had a nontrivial $p$-Sylow subgroup $S$, its unique fixed point would lie in 
    $\mathcal{O}_\infty$. Since $\mathcal{O}_0 \cup \mathcal{O}_m$ is an 
    orbit under the automorphism group, this would force 
    $p \mid |\mathcal{O}_0 \cup \mathcal{O}_m| = 2\,\frac{q+1}{m}$, 
    a contradiction.
\end{proof}

The argument of Proposition~\ref{aut1} now carries over verbatim to give 
the following.

\begin{proposition}
    Let $G$ be as in Proposition~\ref{automorfismos}. Then 
    $\Aut(\mathcal{X}_{m,q}) = G$.
\end{proposition}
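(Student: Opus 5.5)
The plan is to follow the proof of Proposition~\ref{aut1} step by step, using the preceding lemma and the orbit information now available in this setting. The inclusion $G\subseteq\Aut(\mathcal{X}_{m,q})$ is Proposition~\ref{automorfismos}, so only the reverse inclusion needs proof. Suppose $|\Aut(\mathcal{X}_{m,q})|>|G|$. By Propositions~\ref{prop:num-zeros}, \ref{prop:num-inf-even}, \ref{prop:num-inf-odd}, \ref{rank-M} and~\ref{o-linha}, together with $p\nmid\binom{m}{2+(m\bmod 2)}$, Weierstrass semigroups separate $\mathcal{O}_\infty$, $\mathcal{O}_0\cup\mathcal{O}_m$, $\mathcal{O}'$ and the remaining rational places. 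Since $\Aut(\mathcal{X}_{m,q})$ is defined over $\mathbb{F}_{q^2}$, $\mathcal{O}_\infty$ and $\mathcal{O}_0\cup\mathcal{O}_m$ are orbits of the full group. They are also orbits of $G$, because $A$ is transitive on each of $\mathcal{O}_0$, $\mathcal{O}_m$, $\mathcal{O}_\infty$ and $\theta_2$ swaps $\mathcal{O}_0$ and $\mathcal{O}_m$. Applying Orbit--Stabilizer to both groups on the orbit of $P_{(a,0)}$ then gives
\[
|\Aut(\mathcal{X}_{m,q})_{P_{(a,0)}}|>|G_{P_{(a,0)}}|,
\]
so there is some $\gamma\in\Aut(\mathcal{X}_{m,q})_{P_{(a,0)}}\setminus G_{P_{(a,0)}}$.

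Next I would use the preceding lemma: $p\nmid|\Aut(\mathcal{X}_{m,q})|$, so the stabiliser of a place is cyclic by \cite[Theorem~11.49]{HKT}. It contains $C=\langle\sigma\rangle$, where $\sigma(x,y)=(x,\delta y)$ with $\delta$ a primitive $(q+1)$-th root of unity. Indeed, $\sigma$ fixes $P_{(a,0)}$, since that place is totally ramified in the Kummer extension over $\mathbb{F}_{q^2}(x)$. So $\gamma$ commutes with $\sigma$, and by \eqref{sigma-gamma} it permutes the fixed places of $C$. I would check that these are exactly the places totally ramified in $\mathbb{F}_{q^2}(\mathcal{X}_{m,q})/\mathbb{F}_{q^2}(x)$, namely $\mathcal{O}_m$ together with possibly $\mathcal{O}_0$, and for odd $m$ also $\mathcal{O}_\infty$. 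All of these lie inside $\mathcal{O}$, so $\gamma$ preserves $\mathcal{O}$ in any case.

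Because $\gamma$ preserves both orbits $\mathcal{O}_\infty$ and $\mathcal{O}_0\cup\mathcal{O}_m$ and commutes with $C$, it normalises $C$ and hence induces an automorphism $\bar\gamma$ of the fixed field $\mathbb{F}_{q^2}(x)$. The map $\bar\gamma$ permutes the branch set $\{0,\infty\}\cup\{a:a^{(q+1)/m}=-1\}$ and fixes the class of $\infty$, because $\mathcal{O}_\infty$ is exactly the set of places over $\infty$ and it is preserved. Hence $\bar\gamma$ is affine, $x\mapsto \mu x+c$. I would argue $c=0$ as follows. The set of finite branch points is $\{0\}\cup\mu_{(q+1)/m}(-1)$, where $\mu_{(q+1)/m}(-1)$ denotes the $(q+1)/m$-th roots of $-1$. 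An affine map permuting this set preserves its centroid. The centroid of the roots of $-1$ is $0$ for $(q+1)/m\ge2$, so the centroid of the whole set is $0$, which forces $c=0$.

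Then $\gamma^*(x)=\mu x$, so $\gamma^*(y)^{q+1}=\mu^{(q+1)/m}\,\cdots$. Comparing with the equation of the curve, $\gamma^*(y)^{q+1}$ equals $y^{q+1}$ times a constant unless $\mu^{(q+1)/m}=1$ fails. Kummer theory then gives $\gamma^*(y)=\nu y$, possibly after composing with $\theta_2$, which is the case where $\bar\gamma$ swaps $0$ with a root of $-1$. In both cases $\gamma\in G$, a contradiction.

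The main obstacle is the possibility that $\bar\gamma$ moves $0$ into $\mu_{(q+1)/m}(-1)$. Here $\bar\gamma$ need not be affine and the centroid trick fails. I would first handle this by composing $\gamma$ with $\theta_2$, which maps $\mathcal{O}_0$ onto $\mathcal{O}_m$, to reduce to the case where $\gamma$ preserves $\mathcal{O}_0$ and $\mathcal{O}_\infty$ separately. Once that is done, $\gamma$ preserves the divisors of $x$ and of $y$ (Propositions~\ref{divi-xy1}, \ref{divi-xy2}), so $\gamma(x,y)=(\mu_1x,\mu_2y)\in G$ directly, exactly as in Proposition~\ref{aut1}.
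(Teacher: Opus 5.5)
\textbf{There is a genuine gap.} Your setup matches the paper's proof (via Proposition~\ref{aut1}): you take $\gamma$ in the stabiliser of $P_{(a,0)}$ outside $G$, note that the stabiliser is cyclic because $p\nmid|\Aut(\mathcal{X}_{m,q})|$, and conclude $\gamma\sigma=\sigma\gamma$. The gap is the step that does all the work, because you misidentify the fixed places of $C=\langle\sigma\rangle$.

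$C$ is the full Galois group of $\mathbb{F}_{q^2}(\mathcal{X}_{m,q})/\mathbb{F}_{q^2}(x)$, so a place is fixed by $\sigma$ exactly when it is totally ramified, i.e.\ has index $q+1$. This holds only on $\mathcal{O}_m$: see $(x-a)=(q+1)P_{(a,0)}-\dots$ in Propositions~\ref{divi-xy1} and~\ref{divi-xy2}; the ``index $m$'' for $P_{(a,0)}$ in Section~\ref{sec:results} is a slip. Places of $\mathcal{O}_0$ and $\mathcal{O}_\infty$ have index $m$ or $m/2$, both less than $q+1$. Hence $\mathrm{Fix}(C)=\mathcal{O}_m$ exactly, and commuting with $\sigma$ gives $\gamma(\mathcal{O}_m)=\mathcal{O}_m$. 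Together with the two known orbits, $\gamma$ then preserves $\mathcal{O}_0$, $\mathcal{O}_m$ and $\mathcal{O}_\infty$ separately, so it preserves the divisors of $x$ and $y$, and $\gamma\in A$. That is the paper's argument. By weakening this to ``$\gamma$ preserves $\mathcal{O}$'' you recover only what the orbit data already gave, and you still need $\bar\gamma(0)=0$ from somewhere else.

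Neither of your substitutes supplies it. The centroid argument only yields $\bigl(\tfrac{q+1}{m}+1\bigr)c=0$, which says nothing when $p\mid \tfrac{q+1}{m}+1$. For example, take $q=81$, $m=41$, which is in scope ($m$ odd, $3\nmid\binom{41}{3}$). The finite branch set is $\{0,\iota,-\iota\}$, and in characteristic $3$ the translation $x\mapsto x+\iota$ permutes it. The fallback of composing with $\theta_2$ also fails. Since $\gamma$ fixes $P_{(a,0)}\in\mathcal{O}_m$, a $\gamma$ sending some place of $\mathcal{O}_0$ into $\mathcal{O}_m$ would have to mix the two sets. Then $\theta_2\circ\gamma$ preserves neither set; for instance, it sends $P_{(a,0)}$ into $\mathcal{O}_0$. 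Note also that $\bar\gamma$ is affine in every case, since you showed it fixes $\infty$, so your remark that it ``need not be affine'' is inconsistent with your own argument.

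The repair in your own framework is this. $\gamma$ maps the fibre over $b$ bijectively onto the fibre over $\bar\gamma(b)$ and preserves ramification indices. The fibre over $0$ consists of $(q+1)/m>1$ places of index $m$, while the fibre over a root of $x^{(q+1)/m}+1$ is a single place of index $q+1$. Hence $\bar\gamma(0)=0$ and $c=0$, with no centroid computation needed. After that, your Kummer step is a valid alternative to the paper's divisor comparison: $\gamma^*(x)=\mu x$ forces $\mu^{(q+1)/m}=1$ and $\gamma^*(y)=\nu y$. $\theta_2$ is not needed at all.
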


\subsection{\texorpdfstring{Case $p = 2$}{Case p = 2}}

We show that $\Aut(\mathcal{X}_{m,q}) = G$ also when the characteristic 
of $\mathbb{F}_q$ is even. The argument is similar to the previous cases, 
but requires adjustment since $\Aut(\mathcal{X}_{m,q})$ now admits a 
$2$-Sylow subgroup.

\begin{lemma}
    Every nontrivial $2$-Sylow subgroup $S$ of $\Aut(\mathcal{X}_{m,q})$ has order $2$.
\end{lemma}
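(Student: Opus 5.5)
Here $p=2$, so $q$ is even and $q+1$ is odd. Hence $m$ is odd, $|\mathcal{O}_0|=|\mathcal{O}_m|=|\mathcal{O}_\infty|=\frac{q+1}{m}$ is odd, and $|\mathcal{O}_0\cup\mathcal{O}_m|=2\frac{q+1}{m}\equiv 2 \pmod 4$. Under the standing hypothesis on the characteristic, the Weierstrass semigroup results of Sections~\ref{sec:Weierstrass} and~\ref{sec:weierstrass-other} (Propositions~\ref{prop:num-zeros}, \ref{prop:num-inf-odd}, \ref{rank-M}, \ref{o-linha}) show that $\mathcal{O}_\infty$ and $\mathcal{O}_0\cup\mathcal{O}_m$ are unions of orbits of $\Aut(\mathcal{X}_{m,q})$. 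Since they are already orbits of $G$, they are exactly orbits of the full group. The plan is to combine this orbit structure with the fixed-point behaviour of $2$-elements on a curve of $2$-rank zero.

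\textbf{Step 1: every nontrivial $2$-element fixes a point of $\mathcal{O}_\infty$.} Let $S$ be a nontrivial $2$-Sylow subgroup. Maximal curves have $p$-rank zero \cite[Theorem~10.1]{HKT}, so every nontrivial $2$-element has exactly one fixed point \cite[Lemma~11.129]{HKT}. The set $\mathcal{O}_\infty$ is $S$-invariant, its $S$-orbits have $2$-power length, and $|\mathcal{O}_\infty|$ is odd, so $S$ fixes some $P\in\mathcal{O}_\infty$. For any nontrivial $\sigma\in S$, $P$ is then the unique fixed point of $\sigma$. Consequently $\sigma$ fixes no point of $\mathcal{O}_0\cup\mathcal{O}_m$, and all $\langle\sigma\rangle$-orbits there have length divisible by $2$.

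\textbf{Step 2: bound $|S|$ using $\mathcal{O}_0\cup\mathcal{O}_m$.} Since $S$ has a unique fixed point, $S$ acts semiregularly on the remaining rational points, via the standard fact \cite[Lemma~11.129]{HKT} that for $p$-rank zero the stabiliser in $S$ of any other point is trivial. Hence every $S$-orbit on $\mathcal{O}_0\cup\mathcal{O}_m$ has length exactly $|S|$, so $|S|$ divides $2\frac{q+1}{m}$. As $\frac{q+1}{m}$ is odd, this forces $|S|\le 2$, and $|S|=2$ since $S$ is nontrivial.

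\textbf{Main obstacle.} The delicate point is justifying semiregularity off the fixed point, i.e.\ that no nontrivial element of $S$ fixes a point of $\mathcal{O}_0\cup\mathcal{O}_m$. This follows from uniqueness of the fixed point applied to each nontrivial element, provided every such element fixes the same $P$. That in turn holds because all of $S$ fixes $P$ (Step 1). I would also check that Step 1 requires $\mathcal{O}_\infty$ to be genuinely an orbit, which uses the characteristic assumption $p\nmid\binom{m}{3}$ through Proposition~\ref{rank-M}. When $m=3$ that binomial is $1$, so no issue arises.
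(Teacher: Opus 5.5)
Your proposal is correct and follows essentially the same route as the paper: the odd cardinality of $\mathcal{O}_\infty$ forces the unique fixed point of $S$ into $\mathcal{O}_\infty$. Hence $S$ acts semiregularly on $\mathcal{O}_0\cup\mathcal{O}_m$, and $|S|$ divides $2\frac{q+1}{m}\equiv 2 \pmod 4$; your derivation of semiregularity from the uniqueness of the fixed point of each nontrivial element of $S$ is just the step the paper compresses into ``acts freely''.
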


\begin{proof}
    When $p = 2$ we necessarily have $m$ odd, so $\mathcal{O}_\infty$ and 
    $\mathcal{O}_0 \cup \mathcal{O}_m$ are orbits under 
    $\Aut(\mathcal{X}_{m,q})$. Since $|\mathcal{O}_\infty| = \frac{q+1}{m}$ 
    is odd, the unique fixed point of $S$ lies in $\mathcal{O}_\infty$, so 
    $S$ acts freely on $\mathcal{O}_0 \cup \mathcal{O}_m$, giving 
    $|S| \mid |\mathcal{O}_0 \cup \mathcal{O}_m| = 2\,\frac{q+1}{m}$. 
    Since $\frac{q+1}{m}$ is odd, we have $4 \nmid |\mathcal{O}_0 \cup 
    \mathcal{O}_m|$, and hence $|S| = 2$.
\end{proof}

\begin{proposition}
    Let $G$ be as in Proposition~\ref{automorfismos}. Then 
    $\Aut(\mathcal{X}_{m,q}) = G$.
\end{proposition}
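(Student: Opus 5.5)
We would follow the template of Proposition~\ref{aut1}, with the previous lemma (every nontrivial $2$-Sylow subgroup has order $2$) replacing the absence of $p$-elements. Since $p=2$ forces $m$ odd, the sets $\mathcal{O}_\infty$ and $\mathcal{O}_0\cup\mathcal{O}_m$ are orbits of $\Aut(\mathcal{X}_{m,q})$, by the Weierstrass semigroup results of Sections~\ref{sec:Weierstrass} and~\ref{sec:weierstrass-other} together with Proposition~\ref{rank-M}. By orbit--stabilizer we get
\[
|\Aut(\mathcal{X}_{m,q})| = 2\tfrac{q+1}{m}\cdot|\Aut(\mathcal{X}_{m,q})_{P}|, \qquad |G| = 2\tfrac{q+1}{m}\cdot|G_{P}|,
\]
where $P=P_{(a,0)}$. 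Hence it suffices to show $\Aut(\mathcal{X}_{m,q})_{P}=G_P$, and $G_P$ has order $q+1$ (it is the cyclic group $C=\langle\sigma\rangle$, $\sigma(x,y)=(x,\delta y)$).

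\textbf{Step 1: the stabilizer has odd order.} Let $H=\Aut(\mathcal{X}_{m,q})_P$. An involution $\tau\in H$ would be a $2$-element fixing $P\in\mathcal{O}_m$. But the unique fixed point of any $2$-element lies in $\mathcal{O}_\infty$, since $|\mathcal{O}_\infty|$ is odd (the argument of the preceding lemma, using $p$-rank zero and \cite[Lemma~11.129]{HKT}). This is a contradiction, so $2\nmid|H|$. Consequently $H$ is a $p'$-group, and the stabilizer of a point is cyclic by \cite[Theorem~11.49]{HKT}.

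\textbf{Step 2: an extra element would be diagonal.} Suppose $\gamma\in H\setminus G_P$. Since $H$ is cyclic and contains $C$, the element $\gamma$ commutes with $\sigma$. By \eqref{sigma-gamma}, $\gamma$ therefore permutes the fixed points of $C$ on the curve, which are exactly $\mathcal{O}_0\cup\mathcal{O}_m\cup\mathcal{O}_\infty$ minus the non-fixed ones. Here we must identify the fixed set carefully: $\sigma$ fixes exactly the totally ramified places of $x$, namely $\mathcal{O}_0\cup\mathcal{O}_m\cup\mathcal{O}_\infty$ when $m$ is odd and the ramification index is $m$. This needs adjusting: $C$ has order $q+1$ while the indices are $m$, so it is cleaner to use the subgroup generated by $\sigma^{(q+1)/m}$ together with $\sigma$.

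Since $\gamma$ fixes $P\in\mathcal{O}_m$ and preserves the $\Aut$-orbits $\mathcal{O}_\infty$ and $\mathcal{O}_0\cup\mathcal{O}_m$, we need $\gamma(\mathcal{O}_0)=\mathcal{O}_0$. Commuting with $C$ means $\gamma$ normalizes the fixed field $\mathbb{F}_{q^2}(x)$, so $\gamma$ induces a Möbius map on $x$. That map preserves the branch set $\{0,\infty\}\cup\{a:a^{(q+1)/m}=-1\}$ while fixing $\infty$ (because $\mathcal{O}_\infty$ is an orbit that $x$ sends to $\infty$) and fixing $a$. Thus $x\mapsto \mu x+\nu$. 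Preserving the set $\{0\}\cup\{a^{(q+1)/m}=-1\}$, which has $(q+1)/m+1\geq 2$ points, and fixing $a$ forces $\nu=0$ and $\mu^{(q+1)/m}=1$ (check via the centroid or symmetric functions; $p=2$ requires care here). Then $y^{q+1}$ is fixed, so $y\mapsto\mu_2y$, which gives $\gamma\in G$, a contradiction.

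The main obstacle is Step~2's control of the $x$-action in characteristic $2$. The fallback is a direct divisor argument: $\gamma$ preserves $\mathcal{O}_\infty$, and if it also preserves $\mathcal{O}_0$, then $\gamma^*x$ has the same divisor as $x$, so $\gamma^*x=\mu_1x$; likewise $\gamma^*y=\mu_2 y$. To obtain that $\mathcal{O}_0$ is preserved, note that $\gamma$ fixes $P\in\mathcal{O}_m$ and commutes with the involution-free part of $G$, which swaps nothing. Alternatively, use that the subgroup $A\subseteq G$ is normalized, because it is the unique subgroup of its order fixing $\mathcal{O}_\infty$ setwise with the given orbit pattern.
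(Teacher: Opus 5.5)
Your template is the paper's, and Step 1 is correct. It is in fact cleaner than the paper's ``we may assume $\gamma$ has odd order'':
\begin{itemize}
\item every involution has its unique fixed point in the odd-size orbit $\mathcal{O}_\infty$;
\item so the stabilizer of $P_{(a,0)}$ has odd order, is tame, and hence is cyclic;
\item therefore $\gamma$ commutes with $\sigma$.
\end{itemize}

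The gap is in Step 2, at the one point that needs proof. You never show that $\gamma(\mathcal{O}_0)=\mathcal{O}_0$, or equivalently that the induced map $t\mapsto\mu t+\nu$ on the $x$-line has $\nu=0$. The supporting claims you offer fail:
\begin{itemize}
\item Your description of the fixed set of $C$ is wrong. Places of $\mathcal{O}_0$ and $\mathcal{O}_\infty$ have ramification index $m<q+1$ over $\mathbb{F}_{q^2}(x)$, so they are not totally ramified. Each of $\mathcal{O}_0$ and $\mathcal{O}_\infty$ is a single $C$-orbit of size $\tfrac{q+1}{m}$. Also, ``the subgroup generated by $\sigma^{(q+1)/m}$ together with $\sigma$'' is just $C$.
\item The centroid test you suggest gives only $\bigl(\tfrac{q+1}{m}+1\bigr)\nu=0$. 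This is vacuous exactly in characteristic $2$, because $\tfrac{q+1}{m}$ is odd.
\item The fallback remarks (``commutes with the involution-free part of $G$, which swaps nothing'', uniqueness of $A$) are assertions, not arguments.
\end{itemize}

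The missing fact is that $\mathrm{Fix}(C)=\mathrm{Fix}(\sigma)=\mathcal{O}_m$. The inertia group of a place is the subgroup of the cyclic group $C$ whose order is the ramification index. So $\sigma$ fixes a place if and only if it is totally ramified. By Kummer theory, the only places of index $q+1$ are the $P_{(\overline{a},0)}$ with $\overline{a}^{(q+1)/m}=-1$.

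Since $\gamma$ commutes with $\sigma$, it permutes $\mathcal{O}_m$. It therefore preserves $\mathcal{O}_0=(\mathcal{O}_0\cup\mathcal{O}_m)\setminus\mathcal{O}_m$ and $\mathcal{O}_\infty$, hence the divisors of $x$ and $y$. So $\gamma=\theta_{\mu_1,\mu_2}\in G$, which is how the paper concludes.

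In your M\"obius language the same fact reads as follows. Because $\gamma$ normalizes $C$, it preserves ramification indices. So the induced map cannot send $0$ (index $m$) to a root of $t^{(q+1)/m}+1$ (index $q+1$). Fixing $\infty$ then gives $0\mapsto 0$, and fixing $a\neq 0$ forces the induced map to be the identity. Hence $\gamma\in\mathrm{Gal}\bigl(\mathbb{F}_{q^2}(\mathcal{X}_{m,q})/\mathbb{F}_{q^2}(x)\bigr)=C=G_{P_{(a,0)}}$ immediately.

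With this fact inserted your proof is complete; without it, Step 2 is not established.
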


\begin{proof}
    Suppose $|\Aut(\mathcal{X}_{m,q})| > |G|$ and let 
    $\gamma \in \Aut(\mathcal{X}_{m,q})_{P_{(a,0)}} \setminus 
    G_{P_{(a,0)}}$. Since every nontrivial $2$-Sylow subgroup has order $2$ and 
    $2 \mid |G|$, we may assume $\gamma$ has odd order. Let 
    $C = \langle \sigma \rangle$ be as in Proposition~\ref{aut1}. By 
    \cite[Theorem~11.49]{HKT} the tame part of $\Aut(\mathcal{X}_{m,q})$ 
    is cyclic, so $\gamma$ commutes with $C$, and the remainder of the 
    proof is identical to that of Proposition~\ref{aut1}.
\end{proof}

\subsection{\texorpdfstring{Case $m = (q+1)/2$ even}
    {Case m = (q+1)/2 even}}

As noted at the end of Section~\ref{sec:Weierstrass}, $\mathcal{O}$ is 
contained in an orbit under $\Aut(\mathcal{X}_{(q+1)/2,q})$. The single case 
$(m, q) = (4, 7)$ may be checked by an algorithm, so we assume $q \ge 11$. By the 
general-case discussion of the preceding section, $\mathcal{O}$ is itself 
an orbit under this group.

\begin{proposition}
    Let $G$ be as in Proposition~\ref{prop:bigger-aut}. If 
    $q \equiv 3 \pmod{4}$ and $q \ge 11$, then 
    $\Aut(\mathcal{X}_{(q+1)/2,q}) = G$.
\end{proposition}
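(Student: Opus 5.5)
We follow the same pattern as Proposition~\ref{aut1}, with the orbit $\mathcal{O}$ replacing $\mathcal{O}_\infty$ and $\mathcal{O}_0\cup\mathcal{O}_m$. Throughout, $m=(q+1)/2$ and $|\mathcal{O}| = |\mathcal{O}_0|+|\mathcal{O}_m|+|\mathcal{O}_\infty| = 2+2+4 = 8$.

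\textbf{Step 1: no $p$-elements.} Since $m$ is even, $p$ is odd and $p\nmid 8=|\mathcal{O}|$. Because $\mathcal{O}$ is an orbit under the full group, a nontrivial $p$-subgroup $S$ would fix a point of $\mathcal{O}$. Every orbit of $S$ on $\mathcal{O}$ has $p$-power size and $8\not\equiv 1\pmod p$ unless $p=7$, so $S$ fixes at least two points. This contradicts the fact that a curve of $p$-rank zero has order-$p$ automorphisms with exactly one fixed point \cite[Lemma~11.129]{HKT}, \cite[Theorem~10.1]{HKT}. The case $p=7$, where $8\equiv 1$, needs separate care. There I would use the orbit $\mathcal{O}'$ of Proposition~\ref{o-linha} (valid since $m>4$ once $q\ge 11$): its size $\frac{(q+1)}{2}\cdot\frac{2(q+1)}{m}$-type count is prime to $p$ and not $\equiv 1$. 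Alternatively I would bound $|\Aut|$ directly.

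\textbf{Step 2: the stabilizer.} By Orbit--Stabilizer, $|\Aut(\mathcal{X})| = 8\,|\Aut(\mathcal{X})_{P}|$ for $P=P_{(a,0)}\in\mathcal{O}_m$, and $|G|=8(q+1)$ gives $|G_P|=q+1$. So it suffices to show $\Aut(\mathcal{X})_P=G_P$. By Step 1 the stabilizer is tame and hence cyclic \cite[Theorem~11.49]{HKT}. It contains $C=\langle\sigma\rangle$, $\sigma(x,y)=(x,\delta y)$, of order $q+1$. Any $\gamma$ in the stabilizer therefore commutes with $C$ and permutes $\mathrm{Fix}(C)=\mathcal{O}_m$.

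\textbf{Step 3: conclusion.} We must show that $\gamma$ preserves $\mathcal{O}_0$ and $\mathcal{O}_\infty$; this is the main obstacle, since $\mathcal{O}$ is now a single orbit. The approach is to characterize these sets by $C$. The places of $\mathcal{O}_0$ are exactly the points with stabilizer in $C$ of order $m$ but not fixed by $C$, while those of $\mathcal{O}_\infty$ have stabilizer of order $m/2$. Because $\gamma$ commutes with $C$, it preserves $C$-stabilizer orders, so it preserves $\mathcal{O}_0$, $\mathcal{O}_m$ and $\mathcal{O}_\infty$ individually.

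It follows that $\gamma$ preserves the divisors of $x$ and $y$ from Proposition~\ref{divi-xy2}, so $\gamma(x,y)=(\mu_1x,\mu_2y)$. Substituting into the curve equation forces $\mu_1^2=\mu_2^{q+1}=1$, so $\gamma\in A\subseteq G$. Hence $|\Aut(\mathcal{X})_P|=q+1$ and $|\Aut(\mathcal{X})|=8(q+1)=|G|$.
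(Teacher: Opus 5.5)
Your proof is correct and takes essentially the paper's route. You rule out $p$-elements via an invariant orbit whose size is prime to $p$ and $\not\equiv 1 \pmod p$, combined with the one-fixed-point property of $p$-elements on $p$-rank-zero curves; then the cyclic stabilizer of $P_{(a,0)}$ centralizes $C$, so $\gamma$ preserves $\mathcal{O}_0$, $\mathcal{O}_\infty$ and $\mathcal{O}_m$ (you use the $C$-stabilizer orders $m$, $m/2$, $q+1$, the paper uses orbit cardinalities) and is therefore diagonal. The only difference is that you first count on $\mathcal{O}$, which breaks down at $p=7$; the $\mathcal{O}'$ count you fall back on (size $2(q+1)\equiv 2 \pmod p$) is exactly the paper's argument and already works for every odd $p$, so the detour through $\mathcal{O}$ and the ``bound $|\Aut|$ directly'' alternative can be dropped.
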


\begin{proof}
    We first show $p \nmid |\Aut(\mathcal{X}_{(q+1)/2,q})|$ by an argument analogous 
    to the case $m < \frac{q+1}{2}$. By Proposition~\ref{o-linha}, the set 
    $\mathcal{O}' = \{P_{(a,b)} : 2a^2 + 1 = 0\}$ is an orbit under 
    $\Aut(\mathcal{X}_{(q+1)/2,q})$ of cardinality $2(q+1) \equiv 2 \pmod{p}$. Thus a nontrivial
    $p$-Sylow subgroup of $\Aut(\mathcal{X}_{(q+1)/2,q})$ would have at least two fixed 
    points in $\mathcal{O}'$, contradicting 
    \cite[Lemma~11.129]{HKT}.

    The rest follows as in Proposition~\ref{aut1}. Assuming 
    $|\Aut(\mathcal{X}_{(q+1)/2,q})| > |G|$, we obtain $\gamma$ acting on 
    $\mathcal{O}_m$. Since $\mathcal{O}_0$ and $\mathcal{O}_\infty$ are 
    orbits under $\langle \sigma \rangle$ (where 
    $\sigma \colon (x,y) \mapsto (x, \delta y)$ with $\delta$ a primitive 
    $(q+1)$-th root of unity) and $\mathcal{O}$ is an orbit under 
    $\Aut(\mathcal{X}_{(q+1)/2,q})$, an analysis analogous to 
    \eqref{sigma-gamma} shows that $\gamma$ either fixes or interchanges 
    $\mathcal{O}_0$ and $\mathcal{O}_\infty$. Since these sets have 
    different cardinalities, $\gamma$ fixes each of 
    $\mathcal{O}_0$, $\mathcal{O}_\infty$, and $\mathcal{O}_m$, and the 
    argument of Proposition~\ref{aut1} gives $\gamma \in G$, a contradiction.
\end{proof}
	\section*{Acknowledgements}
João Paulo Guardieiro was partially supported by FAPESP (Brazil), 
grant no.\ 2024/19443-4. Yuri da Silva was partially supported by 
CAPES (Brazil) -- Finance Code 001. Saeed Tafazolian was partially 
supported by CNPq grant no.\ 302774/2025-4, FAEPEX grant no.\ 3485/25, 
and FAPESP grant no.\ 2024/00923-6. The authors thank Daniel Cariello 
for his assistance with the rank computation for the matrix in 
\eqref{matrix}.

\end{document}